\newtheorem{thm}{Theorem}[section]
\newtheorem{lem}[thm]{Lemma}
\newtheorem{prop}[thm]{Proposition}
\newtheorem{cor}[thm]{Corollary}
\newtheorem{clm}[thm]{Claim}
\theoremstyle{definition}
\newtheorem{dfn}[thm]{Definition}
\newtheorem{rem}[thm]{Remark}
\newtheorem{ex}[thm]{Example}
\newcommand{\hind}[2]{{\langle \langle #1 \rangle \rangle_{#2}}}
\newcommand{\tildehind}[2]{{\langle \langle #1 \rangle \tilde{\rangle}_{#2}}}
\newcommand{\ind}[2]{{\langle #1 \rangle_{#2}}}
\newcommand{\fakethm}[2]{\noindent {\bf Theorem~\ref{#1}} ~ ~ \emph{#2}}
\DeclareMathOperator{\h}{\mathfrak{h}}
\DeclareMathOperator{\g}{\mathfrak{g}}
\DeclareMathOperator{\m}{\mathfrak{m}}
\DeclareMathOperator{\n}{\mathfrak{n}}
\DeclareMathOperator{\ft}{\mathfrak{t}}
\DeclareMathOperator{\fH}{\mathfrak{H}}
\DeclareMathOperator{\fO}{\mathfrak{O}}
\DeclareMathOperator{\cX}{\mathcal{X}}
\DeclareMathOperator{\cY}{\mathcal{Y}}
\DeclareMathOperator{\sL}{\mathsf{L}}
\DeclareMathOperator{\Aut}{Aut}
\DeclareMathOperator{\Ker}{Ker}
\DeclareMathOperator{\id}{id}
\tikzstyle{slim}=[circle, draw, fill=black, inner sep=0pt, minimum width=8pt]
\tikzstyle{slim_small}=[circle, draw, fill=black, inner sep=0pt, minimum width=6pt]
\tikzstyle{slim_char}=[circle, draw, fill=white, inner sep=1pt, minimum width=6pt]
\tikzstyle{fat}=[circle, draw, fill=black, inner sep=0pt, minimum width=20pt]
\tikzstyle{fat_char}=[circle, draw, fill=white, inner sep=0pt, minimum width=20pt]
\tikzstyle{fat_small}=[circle, draw, fill=black, inner sep=0pt, minimum width=16pt]
\title[The uniqueness of covers for widely generalized line graphs]{The uniqueness of covers of widely generalized line graphs}
\author{Michitaka Furuya}
\address{Kitasato University, College of Liberal Arts and Sciences, 1-15-1~Kitasato, Minami-ku, Sagamihara, Kanagawa, 252-0373, Japan}
\email{michitaka.furuya@gmail.com}
\author{Sho Kubota}
\address{Tohoku University, Graduate School of Information Sciences, 6-3-09~Aoba, Aramamaki-aza, Aoba-ku, Sendai, Miyagi, 980-8579, Japan}
\email{kubota@ims.is.tohoku.ac.jp}
\author{Tetsuji Taniguchi}
\address{Hiroshima Institute of Technology, Department of Electronics and Computer Engineering, 2-1-1~Miyake, Saeki-ku, Hiroshima, 731-5193, Japan}
\email{t.taniguchi.t3@cc.it-hiroshima.ac.jp}
\author{Kiyoto Yoshino}
\address{Tohoku University, Graduate School of Information Sciences, 6-3-09~Aoba, Aramamaki-aza, Aoba-ku, Sendai, Miyagi, 980-8579, Japan}
\email{kiyoto.yosino.r2@dc.tohoku.ac.jp}
\thanks{M. F. was supported by JSPS KAKENHI; grant number: 18K13449,
S.~K. was supported by JSPS KAKENHI; grant number: 18J10656 and
T. T. was supported by JSPS KAKENHI; grant number: 16K05263
 }
\date{}
\keywords{Hoffman graph, strict cover, line graph, generalized line graph}
\subjclass[2010]{05C76, 05C50}
\begin{document}

\maketitle

\begin{abstract}
As a natural generalization of line graphs, Hoffman line graphs were defined by Woo and Neumaier.
Especially, Hoffman line graphs are closely related to the smallest eigenvalue of graphs, and the uniqueness of strict covers of a Hoffman line graph plays a key role in such a study.
In this paper, we prove a theorem for the uniqueness of strict covers under a condition which can be checked in finite time.
Our result gives a generalization and a short proof for the main part of [Ars Math.~Contemp. \textbf{1} (2008) 81--98].
\end{abstract}

\section{Introduction}

Throughout this paper, we consider only finite undirected graphs without loops or multiple edges.

For a graph $G$, the {\it line graph} $\sL(G)$ of $G$ is the graph obtained by $V(\sL(G))=E(G)$ and $E(\sL(G))=\{\{e,e'\} : e,e' \in E(G), | e\cap e' | = 1 \}$.
Line graphs have an important structure representing claw-free graphs (see \cite{CS}), and many researchers have studied properties on line graphs.
For example, Thomassen~\cite{Tho} conjectured that every $4$-connected line graph is Hamiltonian.
To attack Thomassen's conjecture or related topics, we frequently focus on a graph $G_{L}$ such that $\sL(G_{L})$ is isomorphic to the target line graph $L$ (where such a graph $G_{L}$ is called a {\it preimage} of $L$), and discuss the existence of a closed trail with a good property in $G_{L}$ instead of the existence of a Hamiltonian cycle in $L$.
Thus it is important to analyze the structure of a preimage of a line graph.
However, in general, there exist line graphs having two distinct preimages; for example, the triangle and the claw are distinct preimages of the triangle.
Furthermore, when we consider the difference for the correspondence of the vertices of a line graph to the edges of a preimage, we can construct another example of order $6$ (cf. Figure~\ref{fig:LG_2cov} in Section~\ref{sec:pre}).
On the other hand, it is known that a preimage of a line graph of order at least $7$ is uniquely determined even if we consider above difference (cf. Corollary~\ref{rem:GLG}).

A concept of Hoffman graphs appeared implicitly in \cite{H} and was strictly defined by Woo and Neumaier~\cite{WN} as a natural generalization of line graphs, and such graphs are especially used in algebraic graph theory.
Since the definition of a Hoffman line graph is slightly complicated, we postpone giving its strict definition and related notations until Section~\ref{sec:pre} and only give brief descriptions here.
(Thus the readers who want to know strict significance of our result are advised to previously read Section~\ref{sec:pre}.)
We describe typical Hoffman graphs in Figure~\ref{fig:Hoffmangraphs}, where their names derive from a traditional custom.
It is a worthy fact that Hoffman graphs have ``slim'' vertices and ``fat'' vertices.
\begin{figure}	[htbp]
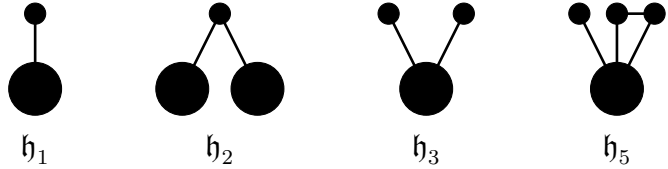
	\label{fig:Hoffmangraphs}
	\centering
	\Hoffa{30pt} \qquad \Hoffb{30pt} \qquad  \Hoffc{30pt} \qquad \Hoffe{30pt}
	\caption{Hoffman graphs}
\end{figure}
For a family $\fH$ of Hoffman graphs, Woo and Neumaier~\cite{WN} defined a {\it slim $\fH$-line graph} and its {\it $\fH$-cover}.
Observing the definition of Hoffman line graphs, we can verify that a given slim $\{\h_{2}\}$-line graphs and its strict $\{\h_{2}\}$-covers correspond to original line graphs and their preimages, respectively.

The uniqueness of strict $\fH$-covers is frequently used in the study of slim $\fH$-line graphs, and the following result is known.

\begin{thm}[Cvetkovi\'c, Doob and Simi\'c~\cite{CDS2}]
\label{thm:GLG}
Every connected slim $\{\h_2,\h_3\}$-line graph of order at least $7$ has a unique strict $\{\h_2,\h_3\}$-cover up to equivalence.
\end{thm}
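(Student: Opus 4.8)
\section*{Proof proposal}

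The plan is to deduce Theorem~\ref{thm:GLG} from the corresponding uniqueness statement for ordinary line graphs, Corollary~\ref{rem:GLG} (the $\{\h_2\}$-case), by isolating the ``petal'' part of a strict $\{\h_2,\h_3\}$-cover in a way that depends only on $L$ and not on the chosen cover. Throughout I work with the standard dictionary: a slim $\{\h_2,\h_3\}$-line graph $L$ is a generalized line graph, a strict $\{\h_2,\h_3\}$-cover of $L$ is a representation $L\cong\sL(G;a_1,\dots,a_n)$, and equivalence of covers corresponds to equivalence of such representations. In a strict $\{\h_2,\h_3\}$-cover the fat vertices split into ``vertex'' fat vertices, whose slim neighbourhoods encode the vertices of the preimage $G$, and ``petal'' fat vertices, each adjacent to exactly two slim vertices $x,y$ that are non-adjacent in $L$ and satisfy $N_L(x)\setminus\{y\}=N_L(y)\setminus\{x\}$.

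The crucial point I would establish first is that \textbf{in a connected slim $\{\h_2,\h_3\}$-line graph $L$ with $|V(L)|\ge 7$, the pairs $\{x,y\}$ coming from petal fat vertices are precisely the non-adjacent false-twin classes of $L$ of size two}; in particular this list is an invariant of $L$ alone and does not depend on the cover. The inclusion ``petal pair $\Rightarrow$ false-twin pair'' is the direct computation above. For the converse I would argue that a size-two false-twin class among the non-petal slim vertices would force the preimage $G$ (which may be taken connected once $L$ is connected) to be one of finitely many small graphs — essentially $P_4$, $C_4$, $K_{1,3}$, $K_{2,3}$ and the like — all of whose generalized line graphs have fewer than $7$ vertices, a contradiction.

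Having identified the petal pairs $\{x_{i,k},y_{i,k}\}$ intrinsically, I would delete one representative $y_{i,k}$ of each pair. The induced subgraph $L_0$ on the remaining slim vertices is an ordinary line graph: it is $\sL(G_0)$ where $G_0$ is $G$ with $a_i$ pendant edges reattached at each vertex $v_i$, and $G_0$ is connected whenever $G$ is. Any strict $\{\h_2,\h_3\}$-cover $\h$ of $L$ restricts, by deleting the petal fat vertices together with the removed slim vertices, to a strict $\{\h_2\}$-cover $\h_0$ of $L_0$ (the neighbourhoods of the vertex fat vertices become cliques after the deletions). If $|V(L_0)|\ge 7$, Corollary~\ref{rem:GLG} gives that $\h_0$, hence $G_0$, is unique up to equivalence; and from $G_0$ one recovers $G$ and the multiplicities $a_i$, since the pendant edges of $G_0$ at $v_i$ that came from petals are exactly those whose slim vertex is one of the previously marked $x_{i,k}$, of which there are $a_i$, and pendant edges at a common vertex are interchangeable. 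Thus the representation $(G,a_1,\dots,a_n)$, and therefore $\h$, is determined up to equivalence.

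The case $|V(L_0)|<7$ is what I expect to be the main obstacle. Since $|V(L)|=|E(G)|+2\sum_i a_i\ge 7$ while $|V(L_0)|=|E(G)|+\sum_i a_i\le 6$, this regime forces $\sum_i a_i\ge 1$ and $|E(G)|\le 5$: the preimage skeleton $G$ is tiny and $L$ is ``petal-heavy''. Here one must enumerate the finitely many connected graphs $G$ with at most five edges together with their possible petal data, and verify by hand that each resulting generalized line graph on at least $7$ vertices admits a unique representation up to equivalence — in particular that none of the small exceptional configurations for line graphs (all of which live on fewer than $7$ vertices) reappear after adding petals, e.g.\ that a cocktail-party graph large enough to have $\ge 7$ vertices is not simultaneously an ordinary line graph. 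Apart from this finite check and the routine verification that $\h\mapsto\h_0$ and its inverse are compatible with the equivalence relations, the argument is just an assembly of the two ingredients above.
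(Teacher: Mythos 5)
First, a point of reference: the paper does not prove Theorem~\ref{thm:GLG} at all — it is imported from Cvetkovi\'c--Doob--Simi\'c~\cite{CDS2} and used (together with Corollary~\ref{rem:GLG}) as the base case $m\in\{0,1\}$ of the induction establishing Theorem~\ref{thm:uni}. So your argument has to stand as an independent proof, and its main logical defect is that it rests on Corollary~\ref{rem:GLG}, which in this paper is itself \emph{deduced from} Theorem~\ref{thm:GLG} (a strict $\{\h_2\}$-cover is in particular a strict $\{\h_2,\h_3\}$-cover); as written the proposal is circular. It can be repaired by invoking instead the classical strengthened Whitney theorem (isomorphisms of line graphs of large connected graphs are induced by isomorphisms of the preimages), but then that external input is doing the work, and you still have to translate it into the cover-level statement with its equivalence relation, where ``order at least $7$'' is exactly what excludes the two non-equivalent covers of Figure~\ref{fig:LG_2cov}.

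Beyond that there are two concrete gaps. (i) Your structural dictionary is wrong in a way that breaks the reduction step: in a strict $\{\h_2,\h_3\}$-cover the fat vertex of an $\h_3$ addend is the vertex $v_i$ of $G$ carrying the petal, and condition~(5) of Definition~\ref{definition:sum} forces it to be shared with every $\h_2$ addend at $v_i$; it is therefore not ``adjacent to exactly two slim vertices'' and cannot simply be deleted. After removing one slim vertex of each petal pair the leftover addends are copies of $\h_1$, and to get a strict $\{\h_2\}$-cover of $L_0$ you must replace them by $\h_2$'s with new private fat vertices — precisely the tilde operation of Definition~\ref{definition:tilde} and Lemma~\ref{lem:ExiCov}; moreover transferring the equivalence of the reduced covers back to an equivalence of the original covers (matching the shared fat vertex of each $\h_3$ with its counterpart and extending by $y_{i,k}\mapsto y_{i,k}$) requires an extension argument of the same kind the paper performs at the end of its proof of Theorem~\ref{thm:uni}, which you dismiss as routine. (ii) Both finite verifications you defer are genuine work and are where the theorem could still fail: the claim that every non-adjacent false-twin pair is an $\h_3$ addend in \emph{every} cover of a connected graph of order at least $7$ (this is provable by a short case analysis with conditions (4)--(5) of Definition~\ref{definition:sum}, bounding the graph by six vertices, but you only gesture at it), and above all the petal-heavy regime $|V(L_0)|\le 6$, which includes for instance the cocktail-party graph on $8$ vertices, whose reduced graph $L_0=K_4$ is exactly the Whitney exception with preimages $K_3$ and $K_{1,3}$; for each such case you must argue uniqueness over all strict covers, not merely over representations $(G,a_1,\dots,a_n)$. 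Until those checks are actually carried out, the proof is incomplete.
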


As a corollary of Theorem~\ref{thm:GLG}, we obtain the following result which assures the uniqueness of preimages of a large line graph.

\begin{cor}
\label{rem:GLG}
Every connected slim $\{\h_2\}$-line graph of order at least $7$ has a unique strict $\{\h_2\}$-cover up to equivalence.
\end{cor}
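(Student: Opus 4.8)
The plan is to obtain this as a formal consequence of Theorem~\ref{thm:GLG}, using only the inclusion $\{\h_2\} \subseteq \{\h_2,\h_3\}$. First I would record how this inclusion propagates through the definitions recalled in Section~\ref{sec:pre}: a decomposition of a Hoffman graph witnessing that it is an $\{\h_2\}$-line graph is a fortiori a decomposition witnessing that it is an $\{\h_2,\h_3\}$-line graph, so every slim $\{\h_2\}$-line graph is a slim $\{\h_2,\h_3\}$-line graph; and since strictness of a cover is a property of the cover itself rather than of the ambient family, every strict $\{\h_2\}$-cover of a slim graph $G$ is at the same time a strict $\{\h_2,\h_3\}$-cover of $G$. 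Both facts are immediate once the relevant notions are unwound.

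Granting these, let $G$ be a connected slim $\{\h_2\}$-line graph with $|V(G)| \geq 7$. As noted in the introduction, $G$ is an ordinary line graph and its strict $\{\h_2\}$-covers are exactly its preimages, so $G$ has at least one strict $\{\h_2\}$-cover; this gives existence. For uniqueness, take two strict $\{\h_2\}$-covers $\fG$ and $\fG'$ of $G$. By the first paragraph they are strict $\{\h_2,\h_3\}$-covers of the connected slim $\{\h_2,\h_3\}$-line graph $G$, which has order at least $7$, so Theorem~\ref{thm:GLG} yields that $\fG$ and $\fG'$ are equivalent. Hence the strict $\{\h_2\}$-cover of $G$ is unique up to equivalence.

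I do not anticipate any real obstacle here: the entire content sits in Theorem~\ref{thm:GLG}, and the only thing to verify is the implication ``strict $\{\h_2\}$-cover $\Rightarrow$ strict $\{\h_2,\h_3\}$-cover'', which is transparent from the definitions. The only mild care needed is to keep the quantifiers straight, namely that uniqueness for the larger family $\{\h_2,\h_3\}$ automatically restricts to uniqueness for the subfamily $\{\h_2\}$ precisely because the class of strict $\{\h_2\}$-covers sits inside the class of strict $\{\h_2,\h_3\}$-covers. (If one would rather not appeal to the fact that a line graph has a preimage, existence can simply be read off from the hypothesis that $G$ is a slim $\{\h_2\}$-line graph.)
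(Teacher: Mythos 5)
Your argument is correct and is essentially the paper's own derivation: the paper presents Corollary~\ref{rem:GLG} as an immediate consequence of Theorem~\ref{thm:GLG}, relying exactly on the observation that every strict $\{\h_2\}$-cover is a strict $\{\h_2,\h_3\}$-cover (with existence of some strict $\{\h_2\}$-cover supplied by Lemma~\ref{lem:ExiCov}, since $\overline{\{\h_2\}}=\{\h_2\}$). No further comment is needed.
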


Taniguchi~\cite{T1} focused on slim $\{\h_2,\h_5\}$-line graphs from the viewpoint of a characterization of graphs with the smallest eigenvalue at least $-1-\sqrt{2}$, and he gave an analogy of Theorem~\ref{thm:GLG} and Corollary~\ref{rem:GLG}:
Every connected slim $\{\h_2,\h_5\}$-line graph of order at least $8$ has a unique strict $\{\h_2,\h_3,\h_5\}$-cover up to equivalence.
The proof of this result was mainly spent on the following theorem, and he found an integer $N$ satisfying Theorem~\ref{thm:T1} as $N=8$ by using computer search.

\begin{thm}[Taniguchi~\cite{T1}]
\label{thm:T1}
Let $N\geq 7$ be an integer.
If every connected slim $\{\h_2,\h_5\}$-line graph of order $N$ has exactly one strict $\overline{\{\h_2,\h_5\}}$-cover up to equivalence,
then every connected slim $\{\h_2,\h_5\}$-line graph of order at least $N$ has exactly one strict $\overline{\{\h_2,\h_5\}}$-cover up to equivalence.
\end{thm}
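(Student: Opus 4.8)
The plan is to prove the statement by strong induction on the order $n=|V(G)|$ of the connected slim $\{\h_2,\h_5\}$-line graph $G$ under consideration, the base case $n=N$ being precisely the hypothesis. So suppose $n>N$, and assume inductively that every connected slim $\{\h_2,\h_5\}$-line graph of order $n-1$ --- which is a meaningful hypothesis since $n-1\ge N\ge 7$ --- has a unique strict $\overline{\{\h_2,\h_5\}}$-cover up to equivalence.

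Since $G$ is connected with $n\ge 2$ vertices, it has a non-cut vertex $v$, for instance a leaf of a spanning tree, so that $G-v$ is again a connected slim $\{\h_2,\h_5\}$-line graph (the class being closed under induced subgraphs, as recalled in Section~\ref{sec:pre}). The first step is a restriction lemma: if $\fO$ is a strict $\overline{\{\h_2,\h_5\}}$-cover of $G$, then deleting $v$ from $\fO$ and afterwards deleting every fat vertex that is left with at most one slim neighbour yields a strict $\overline{\{\h_2,\h_5\}}$-cover of $G-v$; the only thing to verify is that a fat vertex with at most one slim neighbour covers no edge of $G-v$, so that removing it preserves the cover property and the $\overline{\{\h_2,\h_5\}}$-line condition while restoring strictness. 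Granting also the reconstruction lemma below, the proof concludes: $G$ has at least one strict cover since it is a slim $\{\h_2,\h_5\}$-line graph; by the induction hypothesis $G-v$ has a unique strict cover $\fO_0$ up to equivalence; by the restriction lemma every strict cover of $G$ restricts to $\fO_0$; and by the reconstruction lemma this forces all strict covers of $G$ to be equivalent.

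The reconstruction lemma is the substantial part: up to equivalence there is at most one strict $\overline{\{\h_2,\h_5\}}$-cover of $G$ restricting to $\fO_0$. Concretely, given two strict covers $\fO_1,\fO_2$ of $G$, fix an equivalence $\phi$ between their restrictions to $G-v$ and try to extend $\phi$ to an equivalence $\fO_1\to\fO_2$ fixing $v$. This requires showing that the fat neighbours of $v$ and the edges incident with $v$ in $\fO_i$ are forced by $N_G(v)$ together with the already-identified structure on $G-v$: a fat neighbour of $v$ that retains a slim neighbour in $G-v$ is located by $\phi$, and one must pin down, or rule out, the fat neighbours that become ``private'' to $v$, using the $\overline{\{\h_2,\h_5\}}$-line and strictness constraints. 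It is here that $n-1\ge 7$ is used: on the large graph $G-v$ the clique/quasi-clique decomposition, the fat vertices, and the $\h_5$-pieces are all uniquely placed, so the neighbourhood of $v$ admits only one consistent completion.

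I expect this reconstruction step to be the main obstacle, since it is exactly the local analysis --- around a single vertex --- of how neighbourhoods in a $\{\h_2,\h_5\}$-line graph split into cliques and which splittings the family $\overline{\{\h_2,\h_5\}}$ realises, and it is where ``large enough'' must be turned into rigidity. A plausible contingency, if an arbitrary non-cut vertex is not good enough, is to choose $v$ with a simple neighbourhood --- for instance a near-pendant vertex, or one not serving as the slim vertex adjacent to the fat vertex of an induced $\h_5$ --- so that $N_G(v)$ is essentially a single clique with at most one attached fat vertex and the reattachment is transparent. The restriction lemma, by contrast, should be routine once the definitions of strict cover and $\overline{\{\h_2,\h_5\}}$-cover from Section~\ref{sec:pre} are in place.
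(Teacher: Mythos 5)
Your overall skeleton (induct on the order, delete one vertex, restrict both covers to $G-v$, invoke the induction hypothesis, then extend the resulting isomorphism back over $v$) is indeed the shape of the paper's argument, but the step you yourself flag as ``the main obstacle'' --- the reconstruction lemma --- is exactly the mathematical content of the theorem, and your proposal neither proves it nor sets things up so that it could be proved. The crucial point you are missing is that the deleted vertex cannot be an arbitrary non-cut vertex of $G$: in the paper it is chosen \emph{inside the cover}, namely as a slim vertex $\alpha$ of an indecomposable addend $\n^0$ of maximum slim size (isomorphic to $\g_m$, here essentially an $\h_5$-type addend), chosen together with a \emph{second} vertex $\beta$ of the same addend so that $G-\alpha$, $G-\beta$ stay connected and $\n^0-\alpha$, $\n^0-\beta$ stay indecomposable (Lemma~\ref{lem:conn}, Lemma~\ref{lem:conn2}, Claim~\ref{clm:banana}). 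The forcing then works as follows: the isomorphism $\psi$ on $G-\beta$ shows that a third slim vertex $\gamma$ of $\n^0$ must lie in the addend $\m^0$ of the other cover containing $\alpha$, whence $\varphi(\n^0-\alpha)$ is an indecomposable addend sitting inside $\m^0$, and a counting argument using the \emph{maximality} of $|V_s(\g_m)|$ pins down $\varphi(\n^0-\alpha)=\m^0-\alpha$ (Claim~\ref{clm:nasi}); only then can $\varphi$ be extended by $\alpha\mapsto\alpha$. None of this is available if $v$ is merely a leaf of a spanning tree of $G$: knowing the unique cover of $G-v$ does not by itself determine how $v$'s fat neighbours attach, and the ``only one consistent completion'' you hope for is precisely what has to be proved, not assumed. (The paper also runs the induction on $m+n$ rather than on $n$ alone, to dispose of the case in which some cover of $G$ avoids the largest addend type; for $\fH=\{\h_2,\h_5\}$ that case reduces to generalized line graphs and Theorem~\ref{thm:GLG}, another ingredient absent from your outline.)

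There is also a concrete error in your ``routine'' restriction lemma. After deleting $v$ from a strict cover one must \emph{not} delete fat vertices with at most one remaining slim neighbour: a fat vertex with exactly one slim neighbour is perfectly legitimate (both fat vertices of an $\h_2$ addend are of this kind), and deleting it can leave a slim vertex with no fat neighbour in its addend, so the result is no longer a sum of members of $\overline{\{\h_2,\h_5\}}$ and hence not a strict cover in the paper's sense. The correct operation goes the other way: take $\hind{V_s(\h)\setminus\{v\}}{\h}$ (drop only fat vertices that lose \emph{all} slim neighbours) and then apply the tilde construction of Definition~\ref{definition:tilde}, which \emph{adds} a new fat vertex to any addend that has degenerated to $\h_1$, so that the restriction is again a strict $\overline{\{\h_2,\h_5\}}$-cover (Lemmas~\ref{lem:tani}, \ref{lem:tilde}, \ref{lem:ExiCov}). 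This is fixable, but as written your restriction step fails, and the reconstruction step --- the heart of the proof --- is a genuine gap.
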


In this paper, we give the following generalization of Theorems~\ref{thm:T1} with an alternative (and short) proof.
The symbols $\fO$ and $\bar{\fH}$ in the following theorem is defined in Definition~\ref{dfn:OandbarH}.

\begin{thm}
\label{thm:uni}
Let $\fH \subset \fO$ be a family with $\h_2 \in \fH$, and let $N\geq 7$ be an integer.
If every connected slim $\fH$-line graph of order $N$ has exactly one strict $\bar{\fH}$-cover up to equivalence, then every connected slim $\fH$-line graph of order at least $N$ has exactly one strict $\bar{\fH}$-cover up to equivalence.
\end{thm}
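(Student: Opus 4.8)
The plan is to run an induction on the order of the slim $\fH$-line graph $G$, with the hypothesis of the theorem serving as the base case at order $N$. So suppose $G$ is a connected slim $\fH$-line graph with $|V(G)| = n > N$, and suppose by induction that every connected slim $\fH$-line graph of order between $N$ and $n-1$ has a unique strict $\bar{\fH}$-cover up to equivalence. The first step is to choose a vertex $v \in V(G)$ whose removal keeps the graph connected and whose removal stays inside the class of slim $\fH$-line graphs; the condition $\fH \subset \fO$ (whatever closure/heredity $\fO$ encodes) should be exactly what guarantees that such a $v$ exists and that $G - v$ is again a connected slim $\fH$-line graph. Since $n - 1 \geq N \geq 7$, the induction hypothesis applies to $G - v$, giving it a unique strict $\bar{\fH}$-cover $\tilde{\cG}$ up to equivalence.

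The second step is the extension argument: take any strict $\bar{\fH}$-cover $\tilde{G}$ of $G$; restricting the cover structure to the slim vertices of $G - v$ should yield a strict $\bar{\fH}$-cover of $G - v$, which by the induction hypothesis is equivalent to $\tilde{\cG}$. So $\tilde{G}$ is determined on $G - v$ up to equivalence, and the only freedom left is how $v$ (together with any fat vertices attached to it) sits over this fixed cover. The third step — which I expect to be the crux — is to show this local extension is forced: the edges from $v$ to its neighbors in $G - v$, together with the quadratic/inner-product constraints that a strict cover must satisfy (this is where $\h_2 \in \fH$ and the "strictness" of the cover get used, controlling multiplicities of fat vertices and ruling out spurious gluings), pin down both the fat neighborhood of $v$ and the attachment uniquely. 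Because $n - 1 \geq 7$, the subgraph $G - v$ is large enough that its cover has no residual automorphism freedom that could produce two inequivalent extensions — this is presumably the precise role of the bound $N \geq 7$.

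The main obstacle is the local uniqueness step: one must show that given the already-determined cover of $G - v$, there is at most one way (up to equivalence) to attach $v$. The danger is that $v$ could be "ambiguous" in the same way the triangle-versus-claw example is ambiguous at small order — i.e. that $v$ together with a couple of its neighbors forms a configuration admitting two preimages. The point of keeping the order above $7$ is that any such locally ambiguous piece is forced to be consistent with the rest of the (large, rigidly covered) graph, which eliminates the alternative. Concretely I would argue by examining the fat vertices incident with $v$: each such fat vertex $F$ is incident with at least one other slim vertex in $G - v$ (by strictness, since $G$ is connected of order $> 7$), the position of $F$ in $\tilde{G}$ is already fixed by the cover of $G - v$, and then the edge $vF$ plus the slim-slim adjacencies between $v$ and $N(v)$ leave only one admissible choice of which fat vertices are incident with $v$. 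Assembling these: $\tilde{G}$ is equivalent to the unique cover obtained by extending $\tilde{\cG}$, so $G$ has a unique strict $\bar{\fH}$-cover up to equivalence, completing the induction.
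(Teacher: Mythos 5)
There is a genuine gap: the step you yourself flag as the crux --- that once the cover of $G-v$ is fixed, the attachment of $v$ is forced --- is exactly where all the work lies, and your proposal gives no mechanism for it. Knowing a strict $\bar{\fH}$-cover of $G-v$ up to equivalence does not by itself determine which addend of a cover of $G$ contains $v$, nor which fat vertex $v$ is joined to; adjacency constraints alone do not ``pin down'' this data (ambiguities of triangle-versus-claw type can a priori occur inside a single addend, not only at small order). Moreover, two preliminary points you treat as automatic are not: the existence of a vertex $v$ such that $G-v$ is connected and the restricted cover is well behaved is not a consequence of $\fH\subset\fO$ alone --- in the paper it requires choosing $v$ inside a specific addend of a given cover and invoking connectivity lemmas whose proof uses the order-$N$ uniqueness hypothesis itself (Lemma~\ref{prop:disconn}); and a plain induction on $|V(G)|$ cannot even get started when $\bar{\fH}$ is infinite or when the cover uses only $\h_2$ and $\h_3$, which is why the paper inducts on $m+n$ over the filtration $\fH(m)$ (via Lemma~\ref{lem:finite}) and uses the Cvetkovi\'c--Doob--Simi\'c theorem as the base case $m\le 1$.

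The paper's resolution of the crux is quite specific and is worth comparing with your sketch. After reducing to $\fH(m)$-covers, one may assume $G$ has a cover $\n=\bigoplus_i\n^i$ whose addend $\n^0$ is isomorphic to the \emph{largest} element $\g_m$, with $|V_s(\n^0)|\ge 3$. One then deletes \emph{two} slim vertices $\alpha,\beta$ of $\n^0$, chosen (Lemma~\ref{lem:conn2}) so that $G-\alpha$, $G-\beta$ are connected and $\n^0-\alpha$, $\n^0-\beta$ are indecomposable, and applies the induction hypothesis to both $G-\alpha$ and $G-\beta$ against an arbitrary competing cover $\m$. A third vertex $\gamma\in V_s(\n^0)\setminus\{\alpha,\beta\}$, transported by the isomorphism coming from $G-\beta$, shows that the addend $\m^0$ of $\m$ containing $\alpha$ also contains $\gamma$; then indecomposability (via Corollary~\ref{cor:irr}) plus the maximality $|V_s(\g_m)|=\max\{|V_s(\g)|:\g\in\fH(m)\}$ force, by a counting argument, $\varphi(\n^0-\alpha)=\m^0-\alpha$, which is what determines the fat neighbour of $\alpha$ and lets $\varphi$ be extended by $\alpha\mapsto\alpha$ to an equivalence $\n\to\m$. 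None of this two-vertex/maximality machinery, nor the role of $N\ge 7$ through the connectivity lemmas and the $m\le 1$ base case, appears in your outline, so the proposal as it stands does not constitute a proof.
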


This paper is organized as follows:
In Section~\ref{sec:pre}, we define the concept of Hoffman graphs and related topics.
In Section~\ref{sec:lem}, we give some lemmas which are used in the argument for Hoffman graphs.
Many lemmas in Subsection~\ref{sec:lem-sum} have been used in some existing research as folklore.
However, to keep the paper self-contained, we give their proof (and so the proof of some lemmas in Subsection~\ref{sec:lem-sum} does not affect to the shortness of our proof).
Hence readers familiar with Hoffman graphs are advised to skip the proof.
We give more essential lemmas for our proof in Subsections~\ref{sec:connected} and \ref{sec:order}.
We prove Theorem~\ref{thm:uni} in Section~\ref{sec:cov}.
In Section~\ref{sec:computer}, we prove further propositions used in computer search, and we also demonstrate computer search to find the existence of $N$ in Theorem~\ref{thm:uni} for a family of $\fH$ other than $\{\h_{2},\h_{5}\}$ (see Example~\ref{ex: 6.7}).

\section{Hoffman graph}\label{sec:pre}

In this section, we define Hoffman graphs and related concepts.

\begin{dfn}[{\bf Hoffman graph}]
\label{dfn:Hoff}
A {\it Hoffman graph} $\h$ is a pair $(H,\mu)$ of a graph $H$ and a labeling map $\mu:V(H) \to \{f,s\}$, where $V(H)$ denotes the vertex set of $H$, satisfying the following conditions:
\begin{enumerate}
\item
Every vertex with label $f$ is adjacent to at least one vertex with label $s$; and
\item
the vertices with label $f$ are pairwise non-adjacent.
\end{enumerate}
\end{dfn}

Several symbols defined below are analogous to ones used in graph theory.
Let $\h=(H,\mu)$ be a Hoffman graph.
The vertices of $H$ are regarded as the vertices of $\h$.
A vertex of $H$ with label $s$ (resp.\ label $f$) is called a {\it slim vertex} (resp.\ a {\it fat vertex}).
We let $V_s(\h)$ (resp.\ $V_f(\h)$) denote the set of slim vertices (resp.\ fat vertices) of $H$, and let $V(\h)=V_s(\h)\cup V_f(\h)$.
We let $E(\h)$ denote the set of edges of $H$.	
For a vertex $x$ of $\h$, we let $N_{\h}^{s}(x)$ (resp.\ $N_{\h}^{f}(x)$) denote the set of neighbors labeled $s$ (resp.\ $f$) of $x$, and set $N_{\h}(x)=N_{\h}^s(x) \cup N_{\h}^f(x)$.
For two vertices $x$ and $y$, we write $x \sim y$ if $x \in N_{\h}(y)$.
A Hoffman graph $\h=(H,\mu)$ is called a {\it slim graph} if $\h$ has no fat vertices, i.e., $\mu (x)=s$ for every vertex $x$ of $\h$.
We regard an ordinary graph with no labeling as a slim graph.
A Hoffman graph is said to be {\it fat} if every slim vertex is adjacent to a fat vertex.
A Hoffman graph $\h'=(H',\mu')$ is called an {\it induced (Hoffman) subgraph} of $\h$ if $H'$ is an induced subgraph of $H$ and $\mu|_{V(H')}=\mu'$.	
The rest of this paper, ``(Hoffman) subgraph'' means ``induced (Hoffman) subgraph''.
For $X\subset V(\h)$, let $\ind{X}{\h}$ denote the Hoffman subgraph of $\h$ induced by $X$, that is, the pair of the subgraph of $H$ induced by $X$ and the labeling map $\mu|_{X}$.
The graph $\ind{V_s(\h)}{\h}$ is called the {\it slim subgraph} of $\h$.
For $X\subset V_s(\h)$, let $\hind{X}{\h}$ denote the Hoffman subgraph of $\h$ induced by $X \cup ( \bigcup_{x \in X} N_{\h}^f(x) )$.
A (Hoffman) graph is {\it empty} if it has no vertices.

Next we give the definition of the sum of Hoffman graphs.
The definition may seem to be strange, but in fact it comes from lattices, which is described in~\cite{WN}.

\begin{dfn}[{\bf Sum of Hoffman graphs}]
\label{definition:sum}
Let $\h$ be a Hoffman graph, and let $\h^1$ and $\h^2$ be Hoffman subgraphs of $\h$.
We say that $\h$ is the {\it sum} of $\h^1$ and $\h^2$, denoted by $\h=\h^1 \oplus \h^2$, if the following conditions hold:
\begin{enumerate}
\item	\label{s1}
$V(\h)=V(\h^1) \cup V(\h^2)$;
\item	\label{s2}
$V_s(\h)=V_s(\h^1) \sqcup V_s(\h^2)$;
\item	\label{s3}
for $i \in \{1,2\}$ and $x \in V_s(\h^i)$, $N_{\h^i}^f(x) = N_{\h}^f(x)$;
\item	\label{s4}
for $x \in V_s(\h^1)$ and $y \in V_s(\h^2)$, $|N_{\h}^f(x)\cap N_{\h}^f(y)|\leq 1$; and
\item	\label{s5}
for $x \in V_s(\h^1)$ and $y \in V_s(\h^2)$, $|N_{\h}^f(x)\cap N_{\h}^f(y)|=1$ if and only if $x\sim y$ in $\h$.
\end{enumerate}
Note that a Hoffman graph $\h$ can be regarded as the sum of $\h$ and the empty Hoffman graph.
If $\h$ is the sum of two non-empty Hoffman graphs, then it is said to be {\it decomposable}; otherwise, it is said to be {\it indecomposable}.
Note that the sum of Hoffman graphs satisfies commutative and associative law.
Thus the sum of more than two Hoffman graphs is naturally defined, and the sum of only one Hoffman graph should be itself.
An example of a decomposable Hoffman graph are depicted in Figure~\ref{fig:K00}.
For convenience, for a set $\{\h_i\}_{i \in I}$ of Hoffman graphs, we let $\bigoplus_{i \in I} \h_i$ denote the empty graph if $I$ is empty.
For a Hoffman graph $\h$, a non-empty Hoffman subgraph $\h^1$ of $\h$ is called an {\it addend} of $\h$ if there exists a Hoffman subgraph $\h^2$ of $\h$ such that $\h = \h^1 \oplus \h^2$.
We can regard a non-empty Hoffman graph $\h$ as an addend of $\h$.
\end{dfn}

\begin{figure}[htpb]
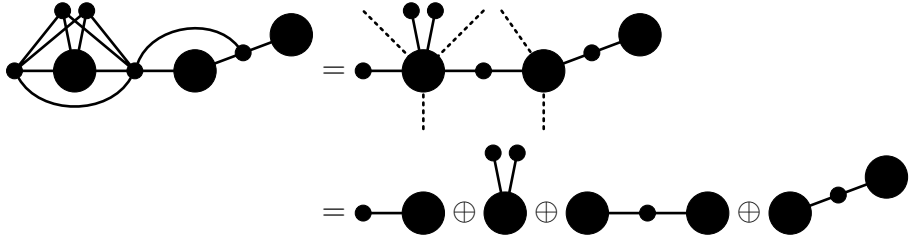

\begin{align*}
	\ExSumWithEdges &=	\ExSum \\
	&= \ExSuma \oplus \ExSumb \oplus \ExSumc  \oplus \ExSumd
\end{align*}
\caption{Example of the sum of Hoffman graphs, whose slim (resp. fat) vertices are depicted as small (resp. large) filled circles, where every region delimited by dotted lines represents an indecomposable addend.}
\label{fig:K00}
\end{figure}

Let $\h=(H,\mu)$ and $\h'=(H',\mu')$ be Hoffman graphs.
A graph isomorphism $\varphi$ from $H$ to $H'$ is called an {\it isomorphism} from $\h$ to $\h'$, written by $\varphi: \h \to \h'$, if $\varphi$ preserves the fatness and the slimness of vertices (i.e., $\varphi (V_s(\h))=V_s(\h')$ and $\varphi (V_f(\h))=V_f(\h')$).
In addition, for an isomorphism $\varphi : \h \to \h'$ and a Hoffman subgraph $\n$ of $\h$, let $\varphi |_{\n}$ denote the restriction $\varphi |_{V(\n)} : V(\n) \to \varphi(V(\n))$, and $\varphi(\n)$ denote the subgraph in $\h'$ induced by $\varphi(V(\n))$.
The Hoffman graphs $\h$ and $\h'$ are {\it isomorphic}, denoted by $\h \simeq \h'$, if there exists an isomorphism from $\h$ to $\h'$.
For a Hoffman graph $\h$ and a family $\fH$ of Hoffman graphs, we write $\h \in \fH$ if $\h$ is isomorphic to a Hoffman graph in $\fH$.

\begin{dfn}[{\bf Families $\fO$ and $\bar{\fH}$}]
\label{dfn:OandbarH}
Let $\fO$ be the family consisting of $\h_2$ and the indecomposable fat Hoffman graphs $\h$ such that $|V_s(\h)|\geq 2$ and $|V_f(\h)|=1$.
For a family $\fH \subset \fO$, we let
$$
\bar{\fH}=\{\h_2\}\cup \{\h\in \fO:\h \text{ is a Hoffman subgraph of an element of }\fH\}.
$$
\end{dfn}

\begin{dfn}[{\bf Line Hoffman graph}]
\label{definition:H-line}
Let $\fH$ be a family of Hoffman graphs.
A Hoffman graph $\g$ is called an {\it $\fH$-line Hoffman graph} if $\g$ is a Hoffman subgraph of a Hoffman graph $\h = \bigoplus_{i=0}^n \h^i$ where $\h^i \in \fH$ for every $i$.
In the above situation, $\h$ is called an {\it $\fH$-cover} of $\g$.
For an $\fH$-line Hoffman graph $\g$, an $\fH$-cover $\h$ of $\g$ is said to be \emph{strict} if $V_s(\h)=V_s(\g)$.
A slim $\fH$-line Hoffman graph is simply said to be a {\it slim $\fH$-line graph}.
Two strict $\fH$-covers $\h$ and $\h'$ of an $\fH$-line Hoffman graph $\g$ are said to be {\it equivalent} if there exists an isomorphism $\varphi: \h \to \h'$ such that $\varphi|_{\g}$ is the identity mapping $\id_{V(\g)}$.
Note that there exists a graph having two non-equivalent strict $\fH$-covers (see Figure~\ref{K01}).
\end{dfn}

\begin{figure}[hbpt]
\begin{align*}
\ExEquiv && \ExEquiva && \ExEquivb
\end{align*}
\caption{Two non-equivalent strict $\{\h_1,\h_2\}$-covers $\h$ and $\h'$ of a graph $G$}
\label{K01}
\end{figure}

As we depict in Figure~\ref{fig:LG_2cov}, it is known that there exists a slim $\{\h_2\}$-line graph of order $6$ having two non-equivalent strict $\{\h_2\}$-covers.
Hence, when we discuss the uniqueness of covers of a slim $\fH$-line graph for a family $\fH$ of Hoffman graphs containing $\h_2$, the condition ``order at least $7$'' is necessary (cf. Theorem~\ref{thm:uni}).
\begin{figure}[htbp]
\begin{align*}
\begin{tikzpicture}[
baseline=0,
yscale = 0.92, xscale = 0.8 , e/.style={line width=1 pt, line join=round, line cap=round}
]
\node at (0,-2) [slim] (a) {};
\node at (2,-1) [slim] (b) {};
\node at (2,1) [slim] (c) {};
\node at (0,2) [slim] (d) {};
\node at (-2,1) [slim] (e) {};
\node at (-2,-1) [slim] (f) {};
\draw [e](a)--(b)--(c)--(d)--(e)--(f)--(a);
\draw [e](a)--(c)--(e)--(a) (b)--(d)--(f)--(b);		
\end{tikzpicture}
&&
\begin{tikzpicture}[
baseline=0,
yscale = 0.92, xscale = 0.8, e/.style={line width=0.5 pt, line join=round, line cap=round},
be/.style={line width=2 pt, line join=round, line cap=round}
]
\node at (0,-2) [slim] (a) {};
\node at (2,-1) [slim] (b) {};
\node at (2,1) [slim] (c) {};
\node at (0,2) [slim] (d) {};
\node at (-2,1) [slim] (e) {};
\node at (-2,-1) [slim] (f) {};
\node at (0,0) [fat] (A) {};
\node at (2.6,1.3) [fat] (B) {};
\node at (-2.6,1.3) [fat] (C) {};
\node at (0,-2.6) [fat] (D) {};
\draw [e](a)--(b)--(c)--(d)--(e)--(f)--(a);
\draw [e](a)--(c)--(e)--(a) (b)--(d)--(f)--(b);		
\draw [be](a)--(A)--(c)--(B)--(d)--(C)--(f)--(D)--(a);
\draw [be](B)--(b)--(D) (e)--(C) (e)--(A);
\end{tikzpicture}
&&
\begin{tikzpicture}[
baseline=0,
yscale = 0.92, xscale = 0.8, e/.style={line width=0.5 pt, line join=round, line cap=round},
be/.style={line width=2 pt, line join=round, line cap=round}
]
\node at (0,-2) [slim] (a) {};
\node at (2,-1) [slim] (b) {};
\node at (2,1) [slim] (c) {};
\node at (0,2) [slim] (d) {};
\node at (-2,1) [slim] (e) {};
\node at (-2,-1) [slim] (f) {};
\node at (0,0) [fat] (A) {};
\node at (2.6,-1.3) [fat] (B) {};
\node at (0,2.6) [fat] (C) {};
\node at (-2.6,-1.3) [fat] (D) {};
\draw [e](a)--(b)--(c)--(d)--(e)--(f)--(a);
\draw [e](a)--(c)--(e)--(a) (b)--(d)--(f)--(b);		
\draw [be](f)--(A)--(b)--(B)--(c)--(C)--(e)--(D)--(f);
\draw [be](D)--(a)--(B) (d)--(C) (A)--(d);
\end{tikzpicture}			
\end{align*}
\caption{A line graph and its non-equivalent two strict $\{\h_2\}$-covers}
\label{fig:LG_2cov}
\end{figure}
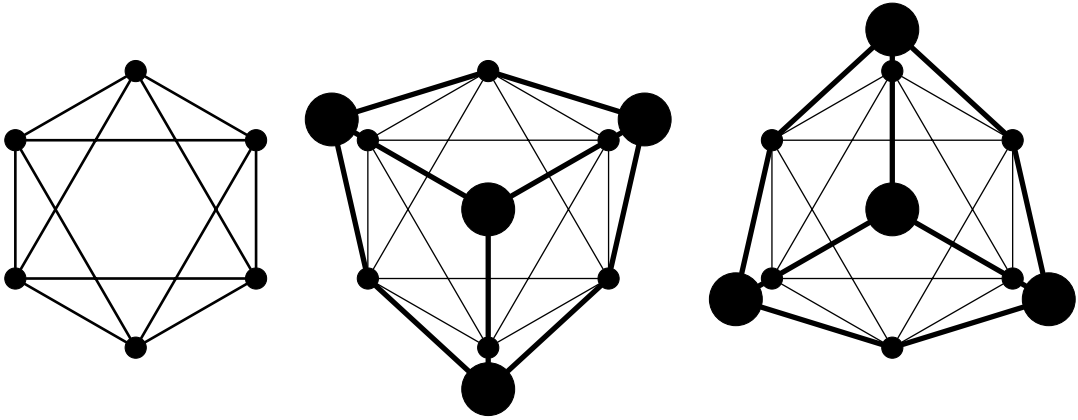

\section{Basic properties and lemmas for Hoffman graphs}\label{sec:lem}

\subsection{Sum of Hoffman graphs}\label{sec:lem-sum}

In this subsection, we discuss a uniqueness and an expression for the sum of Hoffman graphs via indecomposable addends.

\begin{lem}
\label{lem:irr0}
For a Hoffman graph $\n$, an indecomposable decomposition of $\n$ is uniquely determined, that is, if
$$
\n = \bigoplus_{i=0}^k \n^i = \bigoplus_{i=0}^l \m^i
$$
for indecomposable Hoffman graphs $\n^0,\ldots,\n^k,\m^0,\ldots,\m^l$, then $k=l$ and there exists a permutation $\sigma$ on $\{0,1,\ldots ,k\}$ such that $\n^i = \m^{\sigma(i)}$ for each $i$.
\end{lem}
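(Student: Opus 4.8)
The plan is to show that the indecomposable addends are precisely the equivalence classes of an explicit equivalence relation on the slim vertices, so that they are intrinsic to $\n$ and do not depend on a chosen decomposition. First I would unwind the definition of the sum: if $\n = \bigoplus_{i=0}^{k} \n^i$, then condition \eqref{s2} says $V_s(\n) = \bigsqcup_i V_s(\n^i)$, and conditions \eqref{s3}, \eqref{s4}, \eqref{s5} say that for slim $x, y$ in different parts we have $|N_{\n}^f(x) \cap N_{\n}^f(y)| \le 1$ with equality iff $x \sim y$, while the fat neighbourhoods are inherited unchanged from $\n$. The key observation is that whether two slim vertices $x, y$ lie in the same addend can be detected inside $\n$ alone: define $x \approx y$ if either $x = y$, or $x \sim y$ and $|N_{\n}^f(x) \cap N_{\n}^f(y)| \ge 2$, or $x \not\sim y$ and $N_{\n}^f(x) \cap N_{\n}^f(y) \ne \emptyset$; then take the transitive closure, call it $\equiv$. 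For any decomposition, slim vertices in the same $\n^i$ are $\equiv$-related (since within a single addend the sum conditions impose no constraint, the "exceptional" adjacency/fat-overlap pattern can occur), and slim vertices in different addends are never $\approx$-related by \eqref{s4}–\eqref{s5}, hence never $\equiv$-related since $\equiv$-chains cannot jump between parts. Therefore the partition of $V_s(\n)$ into addends refines — in fact equals — the partition into $\equiv$-classes.

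Next I would recover each indecomposable addend from its slim-vertex class. Given an $\equiv$-class $C \subseteq V_s(\n)$, the corresponding addend must have slim vertex set $C$, and by \eqref{s3} each $x \in C$ keeps its full fat neighbourhood $N_{\n}^f(x)$; moreover every fat vertex $u$ of $\n$ is adjacent to some slim vertex, and I would check that all slim neighbours of a fixed fat $u$ lie in one $\equiv$-class (two slim neighbours of $u$ share $u$ as a common fat neighbour, so they are $\approx$-related), so the fat vertices are partitioned among the classes accordingly. Hence the addend supported on $C$ is forced to be $\hind{C}{\n}$, the Hoffman subgraph induced by $C$ together with $\bigcup_{x\in C} N_{\n}^f(x)$. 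This is independent of the decomposition, and one checks directly that $\n = \bigoplus_{C} \hind{C}{\n}$ holds, with each $\hind{C}{\n}$ indecomposable (any nontrivial decomposition of it would split $C$ into two $\equiv$-unrelated pieces, contradicting that $C$ is a single class).

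Finally, given two decompositions $\n = \bigoplus_{i=0}^k \n^i = \bigoplus_{i=0}^l \m^i$, both the $\n^i$ and the $\m^j$ are indecomposable, so by the previous paragraph each $\n^i$ equals $\hind{C}{\n}$ for the $\equiv$-class $C = V_s(\n^i)$, and likewise for each $\m^j$; since the $\equiv$-classes form a fixed partition of $V_s(\n)$, the families $\{V_s(\n^i)\}_i$ and $\{V_s(\m^j)\}_j$ coincide as set partitions, giving $k = l$ and a permutation $\sigma$ with $V_s(\n^i) = V_s(\m^{\sigma(i)})$, whence $\n^i = \hind{V_s(\n^i)}{\n} = \hind{V_s(\m^{\sigma(i)})}{\n} = \m^{\sigma(i)}$. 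The main obstacle I anticipate is the bookkeeping in the first step: verifying carefully that an indecomposable addend really is a single $\equiv$-class — i.e. that it cannot be further split — which amounts to checking that $\hind{C}{\n}$ admits no nontrivial $\oplus$-decomposition, using that $C$ is $\approx$-connected. A secondary subtlety is confirming that distinct $\equiv$-classes can be legitimately recombined via $\oplus$, i.e. that conditions \eqref{s4} and \eqref{s5} are automatically satisfied between different classes because, by construction of $\approx$, cross-class slim pairs have fat-overlap of size $\le 1$ and adjacency exactly matching overlap $=1$; this should follow from examining what it means for a pair to fail $x \approx y$. Everything else is routine unwinding of Definition~\ref{definition:sum}.
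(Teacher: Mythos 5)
There is a genuine gap, and it lies in the definition of your relation $\approx$: you have omitted one of the two ways a pair of slim vertices can violate the cross-addend conditions, namely \emph{adjacent} slim vertices with \emph{no} common fat neighbour. By Definition~\ref{definition:sum}~(\ref{s5}), if $x\in V_s(\h^1)$, $y\in V_s(\h^2)$ and $x\sim y$, then $|N_{\n}^f(x)\cap N_{\n}^f(y)|=1$; hence a pair with $x\sim y$ and $N_{\n}^f(x)\cap N_{\n}^f(y)=\emptyset$ is forced into the same addend, yet such a pair is not $\approx$-related under your definition. Consequently both central claims of your plan fail: an indecomposable addend need not be a single $\equiv$-class, and $\n$ need not equal $\bigoplus_{C}\hind{C}{\n}$, since condition (\ref{s5}) is violated between two classes joined by such an edge. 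The simplest counterexample is the slim graph $\n$ consisting of a single edge $xy$ with no fat vertices: it is indecomposable (splitting it as $\hind{\{x\}}{\n}\oplus\hind{\{y\}}{\n}$ violates (\ref{s5})), but your $\equiv$-classes are $\{x\}$ and $\{y\}$, so the asserted equality of the addend partition with the $\equiv$-class partition, and the final identification $\n^i=\hind{C}{\n}$, both break down. The fix is small: declare $x\approx y$ exactly when the pair violates (\ref{s4})--(\ref{s5}), i.e.\ when $x\sim y$ and $|N_{\n}^f(x)\cap N_{\n}^f(y)|\neq 1$, or $x\not\sim y$ and $N_{\n}^f(x)\cap N_{\n}^f(y)\neq\emptyset$. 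This corrected relation is precisely what the paper encodes with the weight $w(x,y)$ and the auxiliary graph $G$, and with it your argument becomes essentially the paper's proof.

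A secondary weakness: your stated reason that slim vertices in the same addend are $\equiv$-related (``the exceptional pattern can occur'') is not an argument; occurrence is not guaranteed pairwise. The correct reason, which you do gesture at as your ``main obstacle,'' is indecomposability: if the slim vertex set of an indecomposable addend were a disjoint union of two or more $\equiv$-classes, then --- because cross-class pairs satisfy (\ref{s4})--(\ref{s5}) (true only for the corrected relation) --- the addend would decompose into the corresponding induced Hoffman subgraphs, a contradiction. This is exactly the content of Claims~\ref{clm:r1} and~\ref{clm:r2} in the paper, where one must also verify conditions (\ref{s1})--(\ref{s3}) for the would-be decomposition (the fat vertices distribute correctly because every fat vertex has a slim neighbour, a point you did handle). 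Once these two repairs are made, the concluding step $\n^i=\hind{V_s(\n^i)}{\n}=\hind{V_s(\m^{\sigma(i)})}{\n}=\m^{\sigma(i)}$ via condition (\ref{s3}) is correct and identical to the paper's.
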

\begin{proof}
We define the graph $G$ as $V(G) = V_s(\n)$ and
$$
E(G) = \{ \{x,y\} : x,y \in V_s(\n) \text{ with }x\neq y \text{ and } w(x,y) \neq 0 \},
$$
where
$$
w(x,y) := -|N_{\n}^f(x) \cap N_{\n}^f(y)|+
\begin{cases}
1 & \text{if } x \sim y \text{ in } \n \\
0 & \text{otherwise}.
\end{cases}
$$
Let $G_0,\ldots,G_n$ be the connected components of $G$.

\begin{clm}
\label{clm:r1}
For each $j \in \{0,\ldots,n\}$, there exist indices $i_{1} \in \{0,\ldots,k\}$ and $i_{2} \in \{0,\ldots,l\}$ such that $V(G_{j})\subset V_{s}(\n^{i_{1}})$ and $V(G_{j})\subset V_{s}(\m^{i_{2}})$.
\end{clm}
\begin{proof}[Proof of Claim~\ref{clm:r1}]
By the symmetry of $\n^{i}$ and $\m^{i}$, it suffices to show that for two adjacent vertices $x$ and $y$ of $G$, there exists an index $i\in \{0,\ldots,k\}$ with $x,y\in V_{s}(\n^{i})$.
By way of contradiction, we suppose that there exist two adjacent vertices $x$ and $y$ of $G$ such that $x\in V_{s}(\n^{i})$ and $y\in V_{s}(\n^{i'})$ for some indices $i$ and $i'$ with $i\neq i'$.
By the definition of edges of $G$, $w(x,y)\neq 0$.
If $x\sim y$ in $\n$, then it follows from Definition~\ref{definition:sum}~(\ref{s5}) that $|N_{\n}^f(x) \cap N_{\n}^f(y)|=1$, and hence $w(x,y) = -|N_{\n}^f(x) \cap N_{\n}^f(y)|+1 = 0$, which is a contradiction.
Thus $x\not\sim y$ in $\n$.
Then by Definition~\ref{definition:sum}~(\ref{s4}) and (\ref{s5}), we have $|N_{\n}^f(x) \cap N_{\n}^f(y)| =0$.
Thus $w(x,y) = -|N_{\n}^f(x) \cap N_{\n}^f(y)| = 0$, which is a contradiction.
\end{proof}

\begin{clm}
\label{clm:r2}
We have $\{ V(G_0),\ldots,V(G_n) \}=\{ V_s(\n^0),\ldots,V_s(\n^k) \}=\{ V_s(\m^0),\ldots,V_s(\m^l) \}$.
\end{clm}
\begin{proof}[Proof of Claim~\ref{clm:r2}]
By Claim~\ref{clm:r1} and the symmetry of indices, it suffices to show that $|\{j:V(G_{j}) \subset V_{s}(\n^{0})\}|=1$.
Considering Claim~\ref{clm:r1} again, without loss of generality, we may assume that
\begin{align}
V_s(\n^0)=V(G_0) \sqcup \cdots \sqcup V(G_m) \text{ for a non-negative integer } m.
\end{align}
We show that $m=0$.
By way of contradiction, we suppose that $m\geq 1$.
Let $\ft^0 := \hind{V(G_0)}{\n^0}$ and $\ft^1 := \hind{V(G_1) \sqcup \cdots \sqcup V(G_m)}{\n^0}$.

Now we verify the five conditions in Definition~\ref{definition:sum} for $\h=\n^0$, $\h^1=\ft^0$ and $\h^2 = \ft^1$.
Note that Definition~\ref{definition:sum}~(\ref{s3}) is clearly satisfied.
Since $\{V(G_0), \ldots, V(G_m)\}$ is a partition of $V_s(\n^0)$, we have
\begin{align}
V_s(\n^0) = V_s(\ft^0) \sqcup V_s(\ft^1),\label{r2}
\end{align}
and so Definition~\ref{definition:sum}~(\ref{s2}) is satisfied.

Let $z\in V_f(\n^0)$.
Then by the definition of Hoffman graph, there exists a vertex $w\in N_{\n^{0}}^{s}(z)$.
Note that $w$ is a vertex belonging to exactly one of $G_0, \ldots, G_m$.
Hence we obtain either $w \in V_s(\ft^0)$ or $w \in V_s(\ft^1)$.
This together with the definition of the symbol $\hind{\cdot}{{}}$, $z$ belongs to $V_f(\ft^0)$ or $V_f(\ft^1)$.
Since $z$ is arbitrary, we have $V_f(\n^0) \subset V_f(\ft^0) \cup V_f(\ft^1)$, and so $V_f(\n^0) = V_f(\ft^0) \cup V_f(\ft^1)$.
This together with (\ref{r2}) implies that Definition~\ref{definition:sum}~(\ref{s1}) is satisfied.

Let $x \in V_s(\ft^0)$ and $y \in V_s(\ft^1)$.
Since $x\not\sim y$ in $G$, we have
$$
0 = w(x,y) = -|N_{\n}^f(x) \cap N_{\n}^f(y)|+
\begin{cases}
1 & \text{if } x \sim y\text{ in }\n\\
0 & \text{otherwise}.
\end{cases}
$$
Hence $|N_{\n^0}^f(x) \cap N_{\n^0}^f(y)| =|N_{\n}^f(x) \cap N_{\n}^f(y)| \leq 1$, and the equality holds if and only if $x$ and $y$ are adjacent in $\n^0$.
This implies that Definition~\ref{definition:sum}~(\ref{s4}) and (\ref{s5}) are satisfied.

Consequently, we have $\n^0 = \ft^0 \oplus \ft^1$, which contradicts the indecomposability of $\n^0$.
\end{proof}

By Claim~\ref{clm:r2}, $k=l$ and there exists a permutation $\sigma$ on $\{0,1,\ldots ,k\}$ such that
\begin{align}
V_s(\n^i) = V_s(\m^{\sigma(i)}) \text{ for every $i$}.\label{r6}
\end{align}
Furthermore, it follows from Definition~\ref{definition:sum}~(\ref{s3}) that $\n^i = \hind{V_s(\n^i)}{\n}$ and $\m^i = \hind{V_s(\m^i)}{\n}$ for every $i$.
This together with (\ref{r6}) leads to
$$
\n^i = 	\hind{V_s(\n^i)}{\n} = \hind{V_s(\m^{\sigma(i)})}{\n} = \m^{\sigma(i)}.
$$
for every $i$.
This completes the proof of Lemma~\ref{lem:irr0}.
\end{proof}

As a corollary of Lemma~\ref{lem:irr0}, we obtain the following result which claims that every isomorphism $\varphi : \n \to \m$ between Hoffman graphs maps each indecomposable addend of $\n$ to an indecomposable addend of $\m$.

\begin{cor}
\label{cor:irr}
Let
$$
\varphi : \n = \bigoplus_{i=0}^k \n^i \to \m = \bigoplus_{i=0}^l \m^i
$$
be an isomorphism between two Hoffman graphs $\n$ and $\m$ where $\n^0,\ldots,\n^k,\m^0,\ldots,\m^l$ are indecomposable addends.
Then $k=l$ and there exists a permutation $\sigma$ on $\{0,1,\ldots ,k\}$ such that $\varphi|_{\n^i} : \n^i \to \m^{\sigma(i)}$ for every $i$.
\end{cor}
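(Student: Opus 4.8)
The plan is to deduce Corollary~\ref{cor:irr} from Lemma~\ref{lem:irr0} by checking that an isomorphism transports the sum decomposition. First I would observe that, since $\varphi:\n\to\m$ is an isomorphism of Hoffman graphs and each $\n^i$ is a Hoffman subgraph of $\n$, the image $\varphi(\n^i)$ is a Hoffman subgraph of $\m$; moreover $\varphi|_{\n^i}:\n^i\to\varphi(\n^i)$ is an isomorphism because $\varphi$ preserves adjacency, slimness and fatness and $\n^i$, $\varphi(\n^i)$ carry the induced structures. In particular $\varphi(\n^i)$ is indecomposable: a decomposition $\varphi(\n^i)=\mathsf{p}\oplus\mathsf{q}$ would pull back under the isomorphism $\varphi|_{\n^i}^{-1}$ to a decomposition of $\n^i$ (here one just checks that the five conditions of Definition~\ref{definition:sum} are preserved by an isomorphism, which is routine since all five conditions are phrased purely in terms of slim/fat vertices, adjacency, and fat-neighbourhoods).

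Next I would argue that $\m=\bigoplus_{i=0}^{k}\varphi(\n^i)$. Again this is a matter of checking the five conditions of Definition~\ref{definition:sum} for the subgraphs $\varphi(\n^0),\ldots,\varphi(\n^k)$ of $\m$, using that $\n=\bigoplus_{i=0}^{k}\n^i$ satisfies them and that $\varphi$ is an isomorphism: condition (\ref{s1}) follows from $V(\n)=\bigcup V(\n^i)$ and surjectivity of $\varphi$; condition (\ref{s2}) from $V_s(\n)=\bigsqcup V_s(\n^i)$ together with $\varphi(V_s(\n))=V_s(\m)$; condition (\ref{s3}) from the fact that $\varphi$ carries $N^f_{\n^i}(x)$ to $N^f_{\varphi(\n^i)}(\varphi(x))$ and $N^f_\n(x)$ to $N^f_\m(\varphi(x))$; and conditions (\ref{s4}), (\ref{s5}) from the analogous transport of $|N^f_\n(x)\cap N^f_\n(y)|$ and of the adjacency relation. (For sums of more than two addends one uses associativity, as noted after Definition~\ref{definition:sum}.)

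Now I have two indecomposable decompositions of the single Hoffman graph $\m$, namely $\m=\bigoplus_{i=0}^{k}\varphi(\n^i)=\bigoplus_{i=0}^{l}\m^i$. Applying Lemma~\ref{lem:irr0} to $\m$ gives $k=l$ and a permutation $\sigma$ on $\{0,1,\ldots,k\}$ with $\varphi(\n^i)=\m^{\sigma(i)}$ for every $i$. Combining with the first paragraph, $\varphi|_{\n^i}:\n^i\to\varphi(\n^i)=\m^{\sigma(i)}$ is an isomorphism for every $i$, which is exactly the assertion.

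I do not expect a genuine obstacle here: the corollary is essentially the statement that ``isomorphism commutes with $\oplus$'' plus the uniqueness already proved in Lemma~\ref{lem:irr0}. The only point needing a little care — and the one I would write out rather than wave at — is the verification that an isomorphism of Hoffman graphs carries a sum decomposition to a sum decomposition and preserves indecomposability, i.e.\ that each clause of Definition~\ref{definition:sum} is invariant under $\varphi$; everything else is a direct appeal to Lemma~\ref{lem:irr0}.
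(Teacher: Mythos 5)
Your proposal is correct and follows essentially the same route as the paper: transport the indecomposable decomposition of $\n$ through $\varphi$ to get $\m=\bigoplus_{i=0}^{k}\varphi(\n^i)$, then apply Lemma~\ref{lem:irr0} to the two decompositions of $\m$. The paper simply treats the transport step (``by the definition of $\varphi$ and the sum of Hoffman graphs'') as immediate, whereas you spell out the verification of Definition~\ref{definition:sum} and of indecomposability.
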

\begin{proof}
By the definition of $\varphi $ and the sum of Hoffman graphs, we have
$$
\bigoplus_{i=0}^k \varphi(\n^i) = \varphi(\n) = \m = \bigoplus_{i=0}^l \m^i,	
$$
i.e., $\m$ has two indecomposable decompositions $\bigoplus_{i=0}^k \varphi(\n^i)$ and $\bigoplus_{i=0}^l \m^i$.
Applying Lemma~\ref{lem:irr0} to the decompositions, we obtain $k=l$ and there exists a permutation $\sigma$ on $\{0,1,\ldots ,k\}$ such that $\varphi (\n^i)=\m^{\sigma(i)}$, that is, $\varphi|_{\n^i} : \n^i \to \m^{\sigma(i)}$ for every $i$.
\end{proof}

\begin{dfn}[{\bf Hoffman graph $\tilde{\n}$}]
\label{definition:tilde}
Let $\n=\bigoplus_{i=0}^k \n^i$ be a Hoffman graph such that $\n^0,\ldots,\n^l \in \{\h_1\}$ and $\n^{l+1},\ldots,\n^k \in \fO$.
If none of $\n^0, \ldots, \n^k$ is isomorphic to $\h_1$, then we define $\tilde{\n} := \n$; otherwise, we let $\tilde{\n}$ denote the Hoffman graph as follows:
\begin{align*}
&V_s(\tilde{\n}) := V_s(\n),\\
&V_f(\tilde{\n}) :=  V_f(\n) \sqcup \{f_0,\ldots,f_l\}, \text{ and }\\
&E(\tilde{\n}) := E(\n) \sqcup \left\{\{s_i,f_i\} \mid i=0,\ldots,l  \right\},
\end{align*}
where $s_i$ is the unique slim vertex of $\n^i$ for $i=0,\ldots,l$ and $f_0,\ldots,f_l$ are pairwise distinct new fat vertices.
In other words, $\tilde{\n}$ is the Hoffman graph obtained from $\n$ by replacing each addend of $\n$ isomorphic to $\h_1$ by a new Hoffman graph isomorphic to $\h_2$.
We often use $(\n \tilde{)}$ instead of $\tilde{\n}$ if the construction (or the formula) of $\n$ is complicated (see Figure~\ref{K02}).
\end{dfn}

\begin{figure}[htbp]
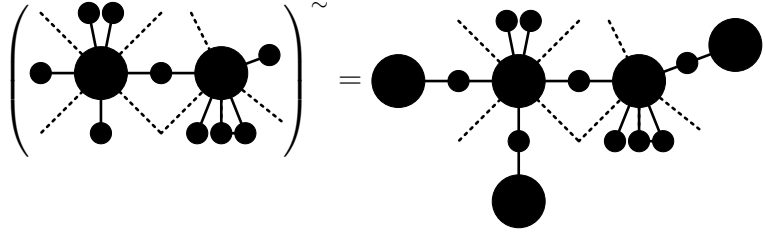

\centering
$\left( \ExOfTilde \right)^{\sim}=\ExOfTildee$
\caption{Example of Definition~\ref{definition:tilde}}
\label{K02}
\end{figure}		

We next give lemmas concerning $\tilde{\n}$.

\begin{lem}
\label{lem:tilde}
Let $\n=\bigoplus_{i=0}^k \n^i$ be a Hoffman graph such that $\n^0,\ldots,\n^l \in \{\h_1\}$ and $\n^{l+1},\ldots,\n^k \in \fO$.
Then
$$
\tilde{\n} = \bigoplus_{i=0}^k (\n^i \tilde{)}.
$$
\end{lem}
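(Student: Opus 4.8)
The plan is to reduce the identity $\tilde{\n} = \bigoplus_{i=0}^k (\n^i\tilde{)}$ to a direct verification of the five conditions in Definition~\ref{definition:sum}, using the fact that $\n = \bigoplus_{i=0}^k \n^i$ already satisfies them. First I would dispose of the trivial case: if none of the $\n^i$ is isomorphic to $\h_1$, then $\tilde{\n} = \n = \bigoplus_{i=0}^k \n^i = \bigoplus_{i=0}^k (\n^i\tilde{)}$ and there is nothing to prove. So assume at least one addend is isomorphic to $\h_1$; recall that $\n^0,\ldots,\n^l \in \{\h_1\}$ and $\n^{l+1},\ldots,\n^k\in\fO$, that $s_i$ denotes the unique slim vertex of $\n^i$ for $i\le l$, and that $f_i$ is the new fat vertex attached to $s_i$ in passing from $\n$ to $\tilde\n$; by construction $(\n^i\tilde{)}$ is the copy of $\h_2$ on $\{s_i, f_i\}$ for $i\le l$, and $(\n^i\tilde{)} = \n^i$ for $i > l$.

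The main point is to check that $\bigoplus_{i=0}^k (\n^i\tilde{)}$, formed as a sum of these Hoffman subgraphs of $\tilde\n$, is in fact all of $\tilde\n$. Condition (\ref{s1}): $V(\tilde\n) = V(\n)\sqcup\{f_0,\ldots,f_l\}$ by Definition~\ref{definition:tilde}, and $\bigcup_i V((\n^i\tilde{)})$ adds exactly the $f_i$ to $\bigcup_i V(\n^i) = V(\n)$, so the vertex sets agree; similarly the edge sets agree since $E(\tilde\n) = E(\n)\sqcup\{\{s_i,f_i\}\}$ and each new edge $\{s_i,f_i\}$ lies in $(\n^i\tilde{)}$. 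Condition (\ref{s2}): $V_s(\tilde\n) = V_s(\n) = \bigsqcup_i V_s(\n^i) = \bigsqcup_i V_s((\n^i\tilde{)})$, the last equality because tilde-ing leaves slim vertices untouched. Condition (\ref{s3}): for $i>l$, $(\n^i\tilde{)} = \n^i$ and $N^f_{\tilde\n}(x) = N^f_{\n}(x)$ for slim $x$ in $\n^i$ since the only fat vertices added to $\n$ are the $f_j$, none adjacent to slim vertices outside $\n^0,\ldots,\n^l$; for $i\le l$, $x = s_i$ has $N^f_{(\n^i\tilde{)}}(s_i) = \{f_i\}$, and in $\tilde\n$ the neighbours of $s_i$ are its old neighbours in $\n$ (all slim, since $\n^i\simeq\h_1$ means $s_i$ had no fat neighbour in $\n$) together with $f_i$, so $N^f_{\tilde\n}(s_i) = \{f_i\}$ as well. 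Conditions (\ref{s4}) and (\ref{s5}): take slim $x\in V_s((\n^i\tilde{)})$, $y\in V_s((\n^j\tilde{)})$ with $i\ne j$; since the new fat vertices $f_i$ are pairwise distinct and each is adjacent only to $s_i$, the quantity $|N^f_{\tilde\n}(x)\cap N^f_{\tilde\n}(y)|$ equals $|N^f_{\n}(x)\cap N^f_{\n}(y)|$ in every case (the $f$'s never contribute to an intersection across two different addends), and adjacency of $x,y$ is the same in $\tilde\n$ as in $\n$; so (\ref{s4}) and (\ref{s5}) for $\bigoplus (\n^i\tilde{)}$ follow from the same conditions for $\bigoplus\n^i$.

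I expect the only mildly delicate point to be the bookkeeping in (\ref{s3}) and (\ref{s5}) showing that attaching the pendant fat vertices $f_i$ does not create any unwanted fat-neighbour coincidences or change adjacencies — this is where one must use that each $f_i$ is new, distinct, and of degree one, attached to a slim vertex $s_i$ that (being the slim vertex of an $\h_1$-addend) had no fat neighbour in $\n$ at all. Everything else is a direct transcription of Definition~\ref{definition:tilde} and the hypothesis $\n = \bigoplus_i \n^i$. Once all five conditions are verified, $\bigoplus_{i=0}^k (\n^i\tilde{)}$ is by definition the sum, and it equals $\tilde\n$, completing the proof.
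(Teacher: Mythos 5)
Your proposal is correct and follows essentially the same route as the paper: a direct verification of the five conditions of Definition~\ref{definition:sum} for the subgraphs $(\n^i\tilde{)}$ of $\tilde{\n}$, with the key observation (for conditions (\ref{s3})--(\ref{s5})) that each new fat vertex $f_i$ has degree one and so never creates cross-addend fat-neighbour coincidences. No gaps; the case split at $i\le l$ versus $i>l$ in (\ref{s3}) matches the paper's computation exactly.
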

\begin{proof}
It suffices to check that the five conditions in Definition~\ref{definition:sum} are satisfied.
Let $f_{0},\ldots ,f_{l}$ be the fat vertices as in the definition of $\tilde{\n}$.
Then it is clear that
$$
V_s(\tilde{\n}) = V_s(\n) = \bigsqcup_{i=0}^k V_s(\n^i) = \bigsqcup_{i=0}^k V_s((\n^i\tilde{)})
$$
and
\begin{align*}
V_f(\tilde{\n}) &= V_f(\n) \sqcup \{f_0,\ldots,f_l\}= \left( \bigcup_{i=0}^k V_f(\n^i)\right)\sqcup \{f_0,\ldots,f_l\}	\\
&= \left( \bigcup_{i=0}^l \left( V_f(\n^i) \sqcup \{f_i\} \right) \right)\cup \left(\bigcup_{i=l+1}^k V_f(\n^i)\right) = \bigcup_{i=0}^k V_f((\n^i\tilde{)}).
\end{align*}
The above equations imply that the conditions (\ref{s1}) and (\ref{s2}) in Definition~\ref{definition:sum} are satisfied.

Let $i \in \{0, \ldots, k\}$ and $x \in V_s(\n^i)$.
If $0\leq i\leq l$, then
$$
N_{\tilde{\n}}^f(x) =N_{\n}^f(x) \cup \{f_i \}= N_{\n^i}^f(x) \cup \{f_i\}= N_{(\n^i\tilde{)}}^f(x);
$$
otherwise,
$$
N_{\tilde{\n}}^f(x)=N_{\n}^f(x)= N_{\n^i}^f(x)= N_{(\n^i\tilde{)}}^f(x).
$$
In either case, we have $N_{(\n^i\tilde{)}}^f(x)=N_{\tilde{\n}}^f(x)$, which implies that the condition (\ref{s3}) in Definition~\ref{definition:sum} is satisfied.

Since each additional fat vertex of $\tilde{\n}$ is adjacent to exactly one slim vertex, the conditions (\ref{s4}) and (\ref{s5}) in Definition~\ref{definition:sum} are satisfied.
\end{proof}

The following lemma was proved in \cite{T1}.

\begin{lem}[{\cite[Lemma~12]{T1}}]
\label{lem:tani}
Let $\h = \bigoplus_{i=0}^k \h^i$ be a Hoffman graph, and let $X\subset V_{s}(\h)$.
Then
$$
\hind{X}{\h} = \bigoplus_{i=0}^k \hind{X \cap V_s(\h^i)}{\h}.
$$
\end{lem}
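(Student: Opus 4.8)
The statement is a matter of unwinding the definitions, and my plan is to verify the five conditions of Definition~\ref{definition:sum} directly. For $i=0,\dots,k$ put $X_i := X \cap V_s(\h^i)$, and set $\n := \hind{X}{\h}$ and $\n^i := \hind{X_i}{\h}$; the goal is to show $\n = \bigoplus_{i=0}^k \n^i$, where the $(k+1)$-fold sum is treated exactly as in the proof of Lemma~\ref{lem:tilde} (alternatively one may induct on $k$ via associativity, reducing to the two-summand case). The one preliminary fact I would isolate is that the operation $\hind{\cdot}{\h}$ never discards a fat neighbour of one of its slim vertices: by definition $V(\n) \supset \bigcup_{x\in X} N_\h^f(x)$ and $V(\n^i) \supset \bigcup_{x\in X_i} N_\h^f(x)$, whence
$$
N_\n^f(x) = N_\h^f(x) \ \ (x\in X), \qquad N_{\n^i}^f(x) = N_\h^f(x) \ \ (x\in X_i).
$$
In particular $V(\n^i)\subset V(\n)$, so each $\n^i$ is an induced Hoffman subgraph of $\n$.

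Next I would handle conditions~(\ref{s1})--(\ref{s3}). Since $V_s(\h)=\bigsqcup_i V_s(\h^i)$, the sets $X_i$ are pairwise disjoint, so $V_s(\n)=X=\bigsqcup_i X_i=\bigsqcup_i V_s(\n^i)$, which is~(\ref{s2}). Distributing the union in the definition of $\n$ over $X=\bigsqcup_i X_i$ gives
$$
V(\n)=X\cup\bigcup_{x\in X}N_\h^f(x)=\bigcup_{i=0}^k\Bigl(X_i\cup\bigcup_{x\in X_i}N_\h^f(x)\Bigr)=\bigcup_{i=0}^k V(\n^i),
$$
i.e.~(\ref{s1}); note that on the fat part this is a genuine, not disjoint, union, because a single fat vertex of $\h$ may have slim neighbours in several of the $\h^i$. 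Condition~(\ref{s3}) is precisely the displayed identity from the first paragraph.

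Finally I would check~(\ref{s4}) and~(\ref{s5}): fix $i\neq j$, $x\in X_i$ and $y\in X_j$; then $x\in V_s(\h^i)$ and $y\in V_s(\h^j)$ lie in distinct addends of $\h$, so~(\ref{s4}) and~(\ref{s5}) for $\h$ give $|N_\h^f(x)\cap N_\h^f(y)|\leq 1$ with equality if and only if $x\sim y$ in $\h$. Because $\n$ is an induced subgraph of $\h$ containing $x$ and $y$, adjacency of $x$ and $y$ in $\n$ agrees with that in $\h$, and $N_\n^f(x)\cap N_\n^f(y)=N_\h^f(x)\cap N_\h^f(y)$ by the first paragraph; hence~(\ref{s4}) and~(\ref{s5}) transfer to $\n$ together with $\n^0,\dots,\n^k$, giving $\n=\bigoplus_{i=0}^k\n^i$. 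The only step with any content is~(\ref{s3}) --- that $\hind{X_i}{\h}$ keeps every fat neighbour of every slim vertex of $X_i$ --- and this is exactly what locks the decomposition onto the prescribed addends of $\h$; everything else is bookkeeping with the distinction between ordinary and disjoint unions.
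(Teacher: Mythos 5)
Your argument is correct: the paper does not prove this lemma itself (it only cites Lemma~12 of \cite{T1}), and your verification of the five conditions of Definition~\ref{definition:sum} for $\n=\hind{X}{\h}$ with addends $\n^i=\hind{X\cap V_s(\h^i)}{\h}$ is exactly the natural definition-unwinding argument, carried out in the same multi-summand style the paper itself uses in the proof of Lemma~\ref{lem:tilde}. You correctly isolate the one point of substance, namely that $\hind{\cdot}{\h}$ retains every fat neighbour of its slim vertices, so that $N_{\n^i}^f(x)=N_{\n}^f(x)=N_\h^f(x)$, which yields condition~(\ref{s3}) and lets conditions~(\ref{s4}) and~(\ref{s5}) transfer from the decomposition of $\h$; no gaps.
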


\begin{lem}
\label{lem:ExiCov}
Let $\fH$ be a subfamily of $\fO$.
Let $\h = \bigoplus_{i=0}^k \h^i$ be a Hoffman graph with $\h^i \in \fH$ for every $i$, and let $G$ be a subgraph of the slim subgraph of $\h$.
Then $\tildehind{V(G)}{\h}$ is a strict $\bar{\fH}$-cover of $G$.
In particular, every slim $\fH$-line graph has a strict $\bar{\fH}$-cover.
\end{lem}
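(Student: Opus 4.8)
The plan is to set $X:=V(G)$, exhibit an indecomposable decomposition of $\hind{X}{\h}$ whose addends all lie in $\{\h_1\}\cup\fO$ (so that $\tildehind{X}{\h}$ is even defined), check that the tilde operation sends each addend into $\bar{\fH}$, push $\tilde{}$ through the sum via Lemma~\ref{lem:tilde}, and finally verify strictness together with the fact that $G$ is recovered as an induced Hoffman subgraph. First I would apply Lemma~\ref{lem:tani} to $\h=\bigoplus_{i=0}^k\h^i$ and $X\subset V_s(\h)$, obtaining $\hind{X}{\h}=\bigoplus_{i=0}^k\hind{X\cap V_s(\h^i)}{\h}$. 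Since $N_{\h}^f(x)=N_{\h^i}^f(x)$ for $x\in V_s(\h^i)$ by Definition~\ref{definition:sum}~(\ref{s3}), the vertex set $(X\cap V_s(\h^i))\cup\bigcup_{x\in X\cap V_s(\h^i)}N_{\h}^f(x)$ is contained in $V(\h^i)$, and as $\h^i$ is an induced Hoffman subgraph of $\h$ we get $\hind{X\cap V_s(\h^i)}{\h}=\hind{X\cap V_s(\h^i)}{\h^i}$. This reduces the problem to analyzing $\hind{Y}{\h^i}$ for $Y:=X\cap V_s(\h^i)$ and $\h^i\in\fH\subset\fO$.

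Next I would classify the indecomposable addends of each $\hind{Y}{\h^i}$. If $\h^i\simeq\h_2$, then $V_s(\h^i)$ consists of a single vertex, so $\hind{Y}{\h^i}$ is empty or isomorphic to $\h_2$, which is indecomposable and already a member of $\bar{\fH}$. Otherwise $\h^i$ is an indecomposable fat Hoffman graph with a unique fat vertex $z$; since $\h^i$ is fat, every slim vertex of $\hind{Y}{\h^i}$ is adjacent to $z$, so $\hind{Y}{\h^i}$ is empty or equals the Hoffman subgraph induced on $Y\cup\{z\}$. Taking an indecomposable decomposition $\hind{Y}{\h^i}=\bigoplus_j\m^{i,j}$ (which exists) and applying Definition~\ref{definition:sum}~(\ref{s3}) to it shows that each addend $\m^{i,j}$ has $z$ as its only fat vertex and is itself fat; hence $\m^{i,j}\simeq\h_1$ when $|V_s(\m^{i,j})|=1$, while if $|V_s(\m^{i,j})|\ge2$ then $\m^{i,j}$ is an indecomposable fat Hoffman graph with one fat vertex, so $\m^{i,j}\in\fO$, and being an induced Hoffman subgraph of $\h^i\in\fH$ it lies in $\bar{\fH}$. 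Since the tilde operation replaces each $\h_1$-addend by a copy of $\h_2\in\bar{\fH}$ and fixes the others, combining over all $i$ and $j$ we obtain $\hind{X}{\h}=\bigoplus_{i,j}\m^{i,j}$ with every addend in $\{\h_1\}\cup\fO$, and Lemma~\ref{lem:tilde} then gives $\tildehind{X}{\h}=\bigoplus_{i,j}(\m^{i,j}\tilde{)}$, which is a sum of members of $\bar{\fH}$.

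It remains to check that $\tildehind{X}{\h}$ is a \emph{strict} $\bar{\fH}$-cover of $G$. The tilde operation only attaches new pendant fat vertices, so $V_s(\tildehind{X}{\h})=V_s(\hind{X}{\h})=X=V(G)$, which gives strictness, and it creates no edge among the old vertices, so $\ind{X}{\tildehind{X}{\h}}=\ind{X}{\hind{X}{\h}}=\ind{X}{\h}$; the last graph is the subgraph of the slim subgraph of $\h$ induced on $X$, which is $G$ by hypothesis, so $G$ is an induced Hoffman subgraph of $\tildehind{X}{\h}$. For the final assertion, a slim $\fH$-line graph $G$ is by definition an induced Hoffman subgraph of some $\bigoplus_{i=0}^k\h^i$ with $\h^i\in\fH$; being slim, $G$ is an induced subgraph of the slim subgraph of $\h$, and the statement just proved applies. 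I expect the classification step to be the only delicate point: restricting an addend with a single fat vertex to one slim vertex produces $\h_1$ rather than $\h_2$ --- which is precisely why the tilde operation appears in the statement --- whereas the indecomposable addends with at least two slim vertices keep the shape ``indecomposable with one fat vertex'', placing them in $\fO$ and, as subgraphs of elements of $\fH$, in $\bar{\fH}$.
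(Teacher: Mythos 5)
Your proposal is correct and follows essentially the same route as the paper's proof: reduce via Lemma~\ref{lem:tani} to the pieces $\hind{V(G)\cap V_s(\h^i)}{\h^i}$, decompose each into indecomposable addends, observe that an addend with one slim vertex is $\h_1$ or $\h_2$ while one with at least two slim vertices is an indecomposable fat subgraph of $\h^i$ with a single fat vertex and hence lies in $\bar{\fH}$, and then push the tilde operation through the sum with Lemma~\ref{lem:tilde}. Your extra verifications (that $\hind{Y}{\h}=\hind{Y}{\h^i}$, strictness, and the recovery of $G$ as an induced subgraph) are details the paper leaves implicit, but the argument is the same.
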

\begin{proof}
For each $i$, let $\n^i = \hind{V(G) \cap V_s(\h^i)}{\h^i}$.
Then by Lemma~\ref{lem:tani},
$$
\hind{V(G)}{\h} = \bigoplus_{i=0}^k \n^i.
$$
Now for each $i \in \{0,\ldots,k\}$, we write
$$
\n^i = \bigoplus_{j \in J_i} \n^{i,j},
$$
where $J_i$ is an index set and $\n^{i,j}$'s are indecomposable Hoffman subgraphs of $\n^i$.
If $|V_{s}(\n^{i,j})|=1$, then $\n^{i,j}$ is isomorphic to either $\h_1$ or $\h_2$; if $|V_{s}(\n^{i,j})|\geq 2$, then $\n^{i,j}\in \bar{\fH}\setminus \{\h_{2}\}$ since $\n^{i,j}$ is an indecomposable Hoffman subgraph of $\hind{V(G) \cap V_s(\h^i)}{\h^i}$ and $|V_{f}(\n^{i,j})|=|V_{f}(\hind{V(G) \cap V_s(\h^i)}{\h^i})|=1$.
In either case, $(\n^{i,j}\tilde{)}\in \bar{\fH}$ for all $i$ and $j$.
Since $\tildehind{V(G)}{\h} =  \bigoplus_{i,j} (\n^{i,j}\tilde{)}$ by Lemma~\ref{lem:tilde}, $\tildehind{V(G)}{\h}$ is a strict $\bar{\fH}$-cover of $G$.
\end{proof}

\subsection{Connectedness}\label{sec:connected}

Let $\h$ be a Hoffman graph, and let $\n$ be a Hoffman subgraph of $\h$.
For $X\subset V(\h)$, we let $\n-X$ denote the Hoffman subgraph of $\n$ induced by $V(\n)\setminus X$.
For $x\in V(\h)$, we let $\n-x=\n-\{x\}$.
For a graph $G$, we let $\bar{G}$ denote the complement of $G$.
For two vertex-disjoint graphs $G$ and $H$, we let $G\sqcup H$ be the graph such that $V(G\sqcup H)=V(G)\cup V(H)$ and $E(G\sqcup H)=E(G)\cup E(H)$.

\begin{lem}
\label{lem:conn}
Let $\n=\n^0 \oplus \n^1$ be a Hoffman graph such that $\n^0 \in \fO \setminus \{ \h_2 \}$ and $\n^{1}$ is non-empty, and suppose that the slim subgraph of $\n$ is connected.
Then the slim subgraph of $\n-x$ is connected for every $x\in V_{s}(\n^{0})$.
\end{lem}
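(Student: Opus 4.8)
The plan is to read off the exact structure of the slim subgraph $G$ of $\n$ from the definition of the sum, and then deduce connectedness of $G-x$ by an elementary argument. Write $A = V_s(\n^0)$ and $B = V_s(\n^1)$, so that $V_s(\n) = A \sqcup B$ by Definition~\ref{definition:sum}~(\ref{s2}); here $|A| \ge 2$ because $\n^0 \in \fO \setminus \{\h_2\}$, and $B \ne \emptyset$ because the non-empty Hoffman graph $\n^1$ must contain a slim vertex (every fat vertex needs a slim neighbour). Since $\n^0$ is an indecomposable fat Hoffman graph with exactly one fat vertex $f$, every vertex of $A$ is adjacent to $f$, so $N_{\n}^f(a) = N_{\n^0}^f(a) = \{f\}$ for all $a \in A$ by Definition~\ref{definition:sum}~(\ref{s3}). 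The edges of $G$ lying inside $A$ (resp.\ inside $B$) are exactly those of the slim subgraph of $\n^0$ (resp.\ of $\n^1$), since these are induced Hoffman subgraphs of $\n$; and if we set $Y = \{\, y \in B : y \sim f \text{ in }\n \,\}$, then for $a \in A$ and $b \in B$ Definition~\ref{definition:sum}~(\ref{s5}) together with $N_\n^f(a) = \{f\}$ shows that $a \sim b$ if and only if $b \in Y$. Thus $G$ is the union of the slim subgraph of $\n^0$ on $A$, the slim subgraph of $\n^1$ on $B$, and a complete bipartite graph joining $A$ to $Y$; and since $G$ is connected with $A,B$ both non-empty, there is an edge between $A$ and $B$, which forces $Y \ne \emptyset$.

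Now fix $x \in A$. Since $x$ is slim, the slim subgraph of $\n - x$ is $G - x$, with vertex set $(A \setminus \{x\}) \sqcup B$, and $A \setminus \{x\} \ne \emptyset$ because $|A| \ge 2$. Because the $A$-to-$Y$ attachment of $G$ is \emph{complete}, every vertex of $Y$ is still adjacent in $G - x$ to every vertex of $A \setminus \{x\}$; as $Y$ and $A \setminus \{x\}$ are non-empty, it follows that $Y \cup (A \setminus \{x\})$ lies in a single connected component $C$ of $G - x$. Finally, let $b \in B$; connectedness of $G$ gives a path from $b$ to some vertex of $A$, and the portion of this path preceding its first vertex in $A$ lies entirely in $B$ and terminates at the $B$-endpoint of an $A$--$B$ edge, hence at a vertex of $Y$. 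That portion is a walk in the subgraph of $G$ induced by $B$, which is unaffected by deleting $x \notin B$, so $b$ is joined to $Y$ in $G - x$ and therefore $b \in C$. Hence $C$ contains all of $(A \setminus \{x\}) \cup B$, i.e.\ $G - x$ is connected.

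The real content is the explicit description of $G$; the only points that need care are using that $\n^0$ has exactly one fat vertex, to which all of its slim vertices are attached (so that the cross edges of $G$ form a \emph{complete} bipartite graph to $Y$), and checking that $Y$ and $A \setminus \{x\}$ are non-empty so that the ``single component'' step is not vacuous. Beyond that there is no genuine obstacle: completeness of the $A$-to-$Y$ attachment is precisely what makes the deletion of any one slim vertex of $\n^0$ harmless.
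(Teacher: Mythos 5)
Your proof is correct and follows essentially the same route as the paper: both arguments exploit that $\n^0$ has a unique fat vertex $w$ to which all of $V_s(\n^0)$ is attached, so that by condition (\ref{s5}) of Definition~\ref{definition:sum} every slim vertex of $\n^0$ has the same neighbours in $V_s(\n^1)$ (namely $N_{\n}^{s}(w)\setminus V_s(\n^0)$), giving a complete bipartite attachment whose completeness makes the deletion of any single $x\in V_s(\n^0)$ harmless. You merely spell out the final connectivity step (non-emptiness of $Y$ and routing each vertex of $V_s(\n^1)$ to $Y$ inside $V_s(\n^1)$) that the paper leaves implicit.
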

\begin{proof}
Let $G$ be the slim subgraph of $\n$.
Let $w$ be the unique fat vertex of $\n^0$, and let $u\in V_{s}(\n^{0})$ and $v\in V_{s}(\n^{1})$.
Since $N_{\n}^{f}(u)=N_{\n_{0}}^{f}(u)=\{w\}$, it follows from Definition~\ref{definition:sum}~(\ref{s5}) that
\begin{align*}
u \sim v \text{ in } \n &\iff |N_{\n}^f(u) \cap N_{\n}^f(v)|=1\\
&\iff |\{w\} \cap N_{\n}^f(v)|=1\\
&\iff w \sim v \text{ in } \n.
\end{align*}
This implies that
\begin{align*}
N_{\n}^s(u) \setminus V_s(\n^0) = N_{\n}^s(w) \setminus V_s(\n^0). 
\end{align*}
Consequently, $G$ contains a complete bipartite graph $H$ whose partite sets are $V_s(\n^0)$ and $N_{\n}^s(w) \setminus V_s(\n^0)$ as a (not necessarily induced) subgraph.
Since $|V_s(\n^0)|\geq 2$, $G-x$ is a connected for every $x \in V_s(\n^0)$.
This leads to the desired conclusion.
\end{proof}

\begin{lem}
\label{prop:disconn}
Let $\fH \subset \fO$ be a family with $\fH=\bar{\fH}$.
Let $\h\in \fH$ be a Hoffman graph having $n$ slim vertices, and suppose that there exists an integer $N \in \{5,6,\ldots,2n+1\}$ such that every slim $\fH$-line graph of order $N$ has a unique strict $\fH$-cover up to equivalence.
Then the slim subgraph of $\h$ is connected.
\end{lem}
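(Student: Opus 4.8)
The plan is to argue by contradiction: I would suppose the slim subgraph $G$ of $\h$ is disconnected while some $N\in\{5,6,\dots,2n+1\}$ enjoys the stated uniqueness property, and then exhibit a slim $\fH$-line graph of order $N$ with two non-equivalent strict $\fH$-covers. First I would note that if $\h\simeq\h_2$ then $n=1$, so $\{5,6,\dots,2n+1\}=\emptyset$ and there is nothing to prove; hence one may assume $\h\in\fO\setminus\{\h_2\}$, so $\h$ has a unique fat vertex $w$, which is adjacent to every slim vertex because $\h$ is fat. Let $G_1,\dots,G_m$ be the connected components of $G$, so $m\ge 2$.

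The core of the argument would be the following claim: \emph{if $\n\in\fH$ has a unique fat vertex and its slim subgraph $P$ has exactly $r\ge 2$ connected components $P_1,\dots,P_r$, then the slim graph $P$ has two non-equivalent strict $\fH$-covers.} One such cover is $\n$ itself, which has a single fat vertex. A second is $\n':=\bigoplus_{t=1}^{r}(\hind{V(P_t)}{\n}\tilde{)}$, the $r$ summands being formed with pairwise disjoint sets of fat vertices. To see that $\n'$ is a legitimate strict $\fH$-cover of $P$, I would argue that each $\hind{V(P_t)}{\n}$ is a Hoffman subgraph of $\n$ whose only fat vertex is that of $\n$, so in its indecomposable decomposition every addend with at least two slim vertices lies in $\fO$ and, being a Hoffman subgraph of $\n\in\fH=\bar{\fH}$, lies in $\fH$, whereas every addend with a single slim vertex is a copy of $\h_1$ and is replaced by a copy of $\h_2\in\fH$ under the tilde operation (Definition~\ref{definition:tilde}); hence by Lemma~\ref{lem:tilde} each $(\hind{V(P_t)}{\n}\tilde{)}$ is a sum of members of $\fH$, the pairwise-disjoint-fat sum $\n'$ satisfies the five conditions of Definition~\ref{definition:sum}, and its slim subgraph is $P$. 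Since an equivalence of strict covers is an isomorphism and hence preserves the number of fat vertices, while $|V_f(\n)|=1<r\le|V_f(\n')|$, the covers $\n$ and $\n'$ are non-equivalent.

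It then remains to realize, from $\h$, a Hoffman graph as in the claim whose slim subgraph has order exactly $N$. If $N\le n$, I would pick $X\subseteq V_s(\h)$ with $|X|=N$ meeting at least two of $G_1,\dots,G_m$ (possible since $2\le N\le n$ and $m\ge 2$); then $G[X]$ is disconnected, and $\hind{X}{\h}$ is indecomposable, since in any decomposition $\hind{X}{\h}=\h^1\oplus\h^2$ each summand is non-empty and hence has a slim vertex, and every slim vertex of $\h^1$ is adjacent in $\h$ to every slim vertex of $\h^2$ (they share the fat vertex $w$, by Definition~\ref{definition:sum}), so $G[X]$ would be a join of two graphs and in particular connected, contrary to the choice of $X$; so $\hind{X}{\h}\in\fO$, and being a Hoffman subgraph of $\h\in\fH=\bar{\fH}$ it lies in $\fH$, and the claim applied to $\n=\hind{X}{\h}$ produces two non-equivalent strict $\fH$-covers of the order-$N$ graph $G[X]$. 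If $N>n$, I would apply the claim to $\n=\h$, obtaining non-equivalent strict $\fH$-covers $\h$ and $\h'$ of $G$, and then adjoin to each of them $N-n$ copies of $\h_2$ with fresh, pairwise disjoint pairs of fat vertices: this gives two strict $\fH$-covers of the order-$N$ slim graph $G\sqcup(N-n)K_1$, still non-equivalent because the adjoined copies contribute the same $2(N-n)$ fat vertices to both, whereas the original summand contributes $1$ fat vertex to one cover and at least $2$ to the other. In either case the choice of $N$ is contradicted, so $G$ must be connected.

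The step I expect to be most delicate is the verification that $\n'$ is a legitimate strict $\fH$-cover: one must confirm that the indecomposable addends of $\hind{V(P_t)}{\n}$ are exactly the copies of $\h_1$ (which the tilde operation absorbs into copies of $\h_2$) together with members of $\fO$ that are Hoffman subgraphs of $\n$ --- this is precisely where the hypothesis $\fH=\bar{\fH}$ is used --- and that the pairwise-disjoint-fat sum of these pieces obeys Definition~\ref{definition:sum}; this is essentially the content of the proof of Lemma~\ref{lem:ExiCov}. The remaining ingredients (the choice of $X$, the order bookkeeping, and counting fat vertices for non-equivalence) are routine.
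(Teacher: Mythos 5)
Your proof is correct, but it takes a genuinely different route from the paper's. You exploit the disconnectedness directly and distinguish two strict covers by their numbers of fat vertices: the one-fat-vertex cover $\n$ versus the component-by-component cover $\bigoplus_{t}(\hind{V(P_t)}{\n}\tilde{)}$ built with pairwise disjoint fat vertices (essentially the machinery of Lemma~\ref{lem:ExiCov}); when $N\le n$ you restrict to $\hind{X}{\h}$ for an $N$-set $X$ meeting two components (your inline indecomposability argument is exactly Lemma~\ref{lem:apple}), and when $N>n$ you pad with $N-n$ copies of $\h_2$. The paper instead uses a single construction covering the whole range $5\le N\le 2n+1$: it chooses pieces $A,B\subseteq H$ and $A',B'\subseteq H'$ with $|V(A)|+|V(A')|=\lfloor (N-1)/2\rfloor$ and $|V(B)|+|V(B')|=\lceil (N-1)/2\rceil$, takes vertex-disjoint copies, cones over a new vertex $x$ to obtain a \emph{connected} graph $G$ of order $N$, and attaches two fat vertices $z,w$ in two different patterns; since an equivalence fixes all slim vertices, it would have to send $N^{s}_{\n}(z)=A_0\sqcup A_0'\sqcup\{x\}$ onto $N^{s}_{\m}(z)$ or $N^{s}_{\m}(w)$, which is impossible. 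The trade-off is this: your argument is shorter and more modular (fat-vertex counting plus the cover-existence lemma), but both of your counterexample graphs --- $G[X]$ in the first case and $G\sqcup(N-n)K_1$ in the second --- are disconnected, so you invoke the uniqueness hypothesis in its full stated generality; the paper's counterexample is connected, so its proof needs uniqueness only for connected slim $\fH$-line graphs of order $N$, which is precisely the form of uniqueness that is actually available when this lemma is fed (via Lemma~\ref{lem:conn2}) into the induction proving Theorem~\ref{thm:uni}. So your proof establishes the lemma exactly as stated, while the paper's proof additionally survives a weakening of the hypothesis to connected graphs, which is what the application relies on.
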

\begin{proof}
By way of contradiction, suppose that the slim subgraph of $\h$ is disconnected.
Then the slim subgraph of $\h$ is isomorphic to $H \sqcup H'$ for two non-empty graphs $H$ and $H'$.
Note that $2\leq \lfloor \frac{N-1}{2}\rfloor \leq \lceil \frac{N-1}{2} \rceil \leq n=|V(H)|+|V(H')|$.
Hence there exist two non-empty subgraphs $A$ and $B$ of $H$ and two non-empty subgraphs $A'$ and $B'$ of $H'$ such that $|V(A)|+|V(A')|=\lfloor \frac{N-1}{2} \rfloor $ and $|V(B)|+|V(B')|=\lceil \frac{N-1}{2} \rceil $ (here $A$ might intersect with $B$ and $A'$ might intersect with $B'$).
Then $|V(A)|+|V(B)|+|V(A')|+|V(B')|=N-1$.
Let $A_0$, $B_0$, $A'_0$ and $B'_0$ be vertex-disjoint copies of $A$, $B$, $A'$ and $B'$, respectively.
We define the graph $G$ from $A_0\sqcup B_0\sqcup A'_0\sqcup B'_0$ by adding a new vertex $x$ and joining $x$ to all vertices of $A_0 \sqcup B_0 \sqcup A'_0 \sqcup B'_0$ (see the left graph in Figure~\ref{fig:wa}).
Note that $|V(G)|=N$.

We define the Hoffman graphs $\n$ (resp.\ $\m$) from $G$ by adding two fat vertices $z$ and $w$ such that $N_{\n}(z)=V(A_0) \sqcup V(A_0') \sqcup \{x\}$ and $N_{\n}(w)=V(B_0) \sqcup V(B_0')\sqcup \{x\}$ (resp.\ $N_{\m}(z)=V(A_0) \sqcup V(B_0')\sqcup \{x\}$ and $N_{\m}(w)=V(B_0) \sqcup V(A_0')\sqcup \{x\}$) (see the central graph and the right graph in Figure~\ref{fig:wa}).
Then we can represent $\n$ and $\m$ as
\begin{align*}
\n &= \ind{A_0 \sqcup A_0' \sqcup \{z\}}{\n} \oplus \ind{\{x,z,w\}}{\n} \oplus \ind{B_0 \sqcup B_0' \sqcup \{w\}}{\n} \mbox{ and}\\
\m &= \ind{A_0 \sqcup B_0' \sqcup \{z\}}{\m} \oplus \ind{\{x,z,w\}}{\m} \oplus \ind{B_0 \sqcup A_0' \sqcup \{w\}}{\m}.
\end{align*}
Since every addend in above sums is isomorphic to $\h_{2}$ or a subgraph of $\h$, both $\n$ and $\m$ are $\fH$-line Hoffman graphs.
Considering the fact that $\fH=\bar{\fH}$, this implies that $\n$ and $\m$ are strict $\fH$-covers of $G$.
Since $|V(G)|=N$, it follows from the assumption of the lemma that $G$ has a unique strict $\fH$-cover, i.e., there exists an isomorphism $\psi : \n \to \m$ such that $\psi|_G = \id_{V(G)}$.
Then
\begin{align*}
A_0 \sqcup A_0' \sqcup \{x\} &= N_{\n}^s(z) = \psi(N_{\n}^s(z))\\
&\in \{N_{\m}^s(z),N_{\m}^s(w)\}=\{A_0 \sqcup B_0' \sqcup \{x\},B_0 \sqcup A_0' \sqcup \{x\}\},
\end{align*}
which contradicts the fact that $A_{0}$ and $B_{0}$ are non-empty.
		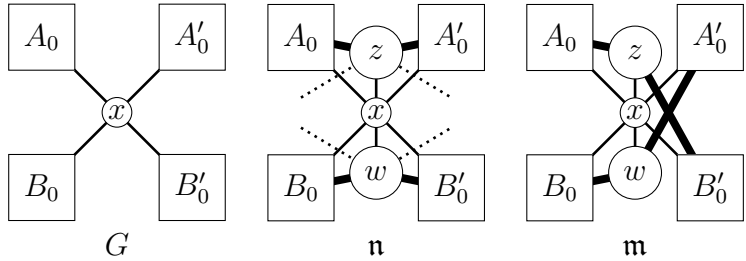
\begin{figure}[htbp]
		\centering
		\begin{tikzpicture}[
			gr/.style={rectangle,draw,fill=white,minimum size=25pt}]
			\node at (0,0) (d) [gr] {$B_0$};
			\node at (0,2) (a) [gr] {$A_0$};
			\node at (2,0) (c) [gr] {$B_0'$};
			\node at (2,2) (b) [gr] {$A_0'$};
			\node at (1,1) (s) [slim_char] {$x$};
			\node at (1,-0.76) [] {$G$};
			\begin{scope}[on background layer]
				\draw[line width = 1pt] (a.center)--(s.center)--(b.center) (c.center)--(s.center)--(d.center);
			\end{scope}
		\end{tikzpicture}
		\quad
		\begin{tikzpicture}[
			gr/.style={rectangle,draw,fill=white,minimum size=25pt}]
			\node at (0,0) (d) [gr] {$B_0$};
			\node at (0,2) (a) [gr] {$A_0$};
			\node at (2,0) (c) [gr] {$B_0'$};
			\node at (2,2) (b) [gr] {$A_0'$};
			\node at (1,1) (s) [slim_char] {$x$};
			\node at (1,1.8) (f) [fat_char] {$z$};
			\node at (1,0.2) (g) [fat_char] {$w$};
			\node at (1,-0.8) [] {$\n$};
			\begin{scope}[on background layer]
				\draw[line width = 1pt] (a.center)--(s.center)--(b.center) (c.center)--(s.center)--(d.center);
				\draw[line width = 1pt] (f.center)--(s.center)--(g.center);
				\draw[line width = 3pt] (d.center)--(g.center)--(c.center);
				\draw[line width = 3pt] (a.center)--(f.center)--(b.center);
				\draw[line width = 1pt,dotted] (0,1.2)--(f)--(2,1.2);
				\draw[line width = 1pt,dotted] (0,0.8)--(g)--(2,0.8);
			\end{scope}
		\end{tikzpicture}
		\quad
		\begin{tikzpicture}[
			gr/.style={rectangle,draw,fill=white,minimum size=25pt}]
			\node at (0,0) (d) [gr] {$B_0$};
			\node at (0,2) (a) [gr] {$A_0$};
			\node at (2,0) (c) [gr] {$B_0'$};
			\node at (2,2) (b) [gr] {$A_0'$};
			\node at (1,1) (s) [slim_char] {$x$};
			\node at (1,1.8) (f) [fat_char] {$z$};
			\node at (1,0.2) (g) [fat_char] {$w$};
			\node at (1,-0.8) [] {$\m$};				
			\begin{scope}[on background layer]
				\draw[line width = 1pt] (a.center)--(s.center)--(b.center) (c.center)--(s.center)--(d.center);
				\draw[line width = 1pt] (f.center)--(s.center)--(g.center);
				\draw[line width = 3pt] (d.center)--(g.center)--(b.center);
				\draw[line width = 3pt] (a.center)--(f.center)--(c.center);
			\end{scope}
		\end{tikzpicture}
		\caption{The graphs in the proof of Lemma~\ref{prop:disconn}}
		\label{fig:wa}
	\end{figure}
\end{proof}

\begin{lem}
\label{lem:apple}
Let $\h\in \fO \setminus \{\h_2\}$, and let $G$ be a non-empty subgraph of the slim subgraph of $\h$.
Then $\bar{G}$ is connected if and only if $\hind{V(G)}{\h}$ is indecomposable.
\end{lem}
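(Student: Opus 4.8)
The plan is to reduce the statement to a purely combinatorial fact about the slim--slim adjacency, exploiting that $\h$ has a single fat vertex adjacent to everything. First I would note that, since $\h \in \fO \setminus \{\h_2\}$, the Hoffman graph $\h$ is fat with $|V_f(\h)|=1$; writing $w$ for its unique fat vertex, fatness forces $N_\h^f(x)=\{w\}$ for every $x \in V_s(\h)$. Hence for a non-empty subgraph $G$ of the slim subgraph, $\hind{V(G)}{\h}$ is exactly the Hoffman graph with vertex set $V(G)\cup\{w\}$ in which $w$ is joined to all of $V(G)$ and the slim--slim edges are precisely $E(G)$.

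Next I would prove the ``if'' direction by contraposition. Assuming $\bar G$ is disconnected, I partition $V(G)=P_1\sqcup P_2$ into non-empty parts with every $P_1$--$P_2$ pair adjacent in $G$, put $\n^i:=\hind{P_i}{\h}$ for $i=1,2$, and check conditions (\ref{s1})--(\ref{s5}) of Definition~\ref{definition:sum} for $\hind{V(G)}{\h}=\n^1\oplus\n^2$. Conditions (\ref{s1})--(\ref{s4}) are immediate because $w$ is the unique fat vertex and lies in both $\n^1$ and $\n^2$; the only substantive check is (\ref{s5}), which says that a vertex $x\in P_1$ and a vertex $y\in P_2$ are adjacent iff $|N_{\h}^f(x)\cap N_{\h}^f(y)|=|\{w\}\cap\{w\}|=1$, i.e.\ always --- and this is precisely the complete-bipartiteness of the partition. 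So $\hind{V(G)}{\h}$ is decomposable.

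For the ``only if'' direction, again by contraposition: if $\hind{V(G)}{\h}=\n^1\oplus\n^2$ with both addends non-empty, then each $\n^i$, being a non-empty Hoffman graph, contains a slim vertex, and (\ref{s3}) forces $w\in V(\n^i)$; so $P_i:=V_s(\n^i)$ gives a partition of $V(G)$ into non-empty parts, and (\ref{s5}) applied to any $x\in P_1$, $y\in P_2$ --- again using $|N_{\h}^f(x)\cap N_{\h}^f(y)|=1$ --- forces $xy\in E(G)$. Hence no edge of $\bar G$ joins $P_1$ to $P_2$, so $\bar G$ is disconnected.

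I do not expect a real obstacle here; the argument is essentially a translation exercise. The only points requiring a little care are the routine verification of the five conditions in Definition~\ref{definition:sum} (four of which are trivialized by $w$ being the unique, universally adjacent fat vertex) and the degenerate case $|V(G)|=1$, where $\hind{V(G)}{\h}\simeq\h_1$ is indecomposable and $\bar G$ is a single vertex, hence connected --- consistent with the claim, and in fact automatically covered since neither ``decomposable'' nor ``$\bar G$ disconnected'' can occur when there is only one slim vertex.
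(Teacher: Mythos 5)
Your proposal is correct and follows essentially the same route as the paper: both reduce the statement to the observation that disconnectedness of $\bar{G}$ is equivalent to a partition of $V(G)$ into two non-empty parts joined completely in $G$, and then identify such a partition with a decomposition $\hind{V(G)}{\h}=\hind{A}{\h}\oplus\hind{B}{\h}$ via the unique fat vertex. The only difference is that you verify the five conditions of Definition~\ref{definition:sum} explicitly (and note the $|V(G)|=1$ case), which the paper leaves implicit.
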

\begin{proof}
Note that $\bar{G}$ is disconnected if and only if there exists a partition $\{A,B\}$ of $V(G)$ such that
\begin{align}
\mbox{$x \sim y$ in $G$ for all $x \in A$ and $y \in B$.}\label{cond-lem:apple}
\end{align}
Thus it suffices to show that there exists a partition $\{A,B\}$ of $V(G)$ satisfying (\ref{cond-lem:apple}) if and only if $\hind{V(G)}{\h}$ is decomposable.
Recall that $\h$ has exactly one fat vertex.
Since a partition $\{A,B\}$ of $V(G)$ with (\ref{cond-lem:apple}) satisfies $\hind{V(G)}{\h} = \hind{A}{\h} \oplus \hind{B}{\h}$, we obtain the ``only if'' part; if $\hind{V(G)}{\h}$ is decomposable, i.e., there exist non-empty Hoffman graphs $\n$ and $\m$ with $\hind{V(G)}{\h} = \n \oplus \m$, then $A := V_s(\n)$ and $B := V_s(\m)$ satisfies (\ref{cond-lem:apple}), and hence we obtain the ``if'' part.
\end{proof}

\begin{lem}
\label{lem:conn2}
Let $\fH \subset \fO$ be a family with $\fH=\bar{\fH}$, and let $\h\in \fH$ be a Hoffman graph with $n$ slim vertices.
Suppose that there exists an integer $N\geq 7$ such that every slim $\fH$-line graph of order $N$ has a unique strict $\fH$-cover up to equivalence.
Then the following hold:
\begin{enumerate}
\item \label{c1}
If $n \ge (N-1)/2$, then the slim subgraph of $\h$ is connected.
\item \label{c2}
If $n \geq 3$, then there are two distinct slim vertices $\alpha$ and $\beta$ of $\h$ such that all of $\h-\alpha$, $\h-\beta$ and $\h-\{\alpha,\beta\}$ are indecomposable.
\item \label{c3}
If $n \geq N-2$, then there are two distinct slim vertices $\alpha$ and $\beta$ such that all of $\h-\alpha$, $\h-\beta$ and $\h-\{\alpha,\beta\}$ are indecomposable and their slim subgraphs are connected.
\end{enumerate}
\end{lem}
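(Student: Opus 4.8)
The plan is to establish the three assertions in the stated order, deriving \ref{c3} by feeding the output of \ref{c2} into \ref{c1}.

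Assertion \ref{c1} should be a direct application of Lemma~\ref{prop:disconn}: from $n\ge(N-1)/2$ we get $N\le 2n+1$, and since $N\ge 7\ge 5$ the integer $N$ lies in $\{5,6,\ldots,2n+1\}$; the hypotheses $\fH=\bar{\fH}$ and ``every slim $\fH$-line graph of order $N$ has a unique strict $\fH$-cover up to equivalence'' are precisely what we have assumed, so Lemma~\ref{prop:disconn} applies to $\h$ and its slim subgraph is connected.

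For \ref{c2}, I would first note that $n\ge 3$ forces $\h\ne\h_2$, so $\h\in\fO\setminus\{\h_2\}$ is indecomposable and fat with a single fat vertex $w$; hence every slim vertex of $\h$ is adjacent to $w$, and consequently $\h-X=\hind{V_s(\h)\setminus X}{\h}$ for every $X\subseteq V_s(\h)$ with $|X|\le n-1$ (in particular $\h=\hind{V_s(\h)}{\h}$). Writing $G$ for the slim subgraph of $\h$ and using $\overline{G-Y}=\bar{G}-Y$, Lemma~\ref{lem:apple} turns ``$\h-\alpha$, $\h-\beta$, $\h-\{\alpha,\beta\}$ indecomposable'' into ``$\bar{G}-\alpha$, $\bar{G}-\beta$, $\bar{G}-\{\alpha,\beta\}$ connected'' (and, applied to $G$ itself, into ``$\bar{G}$ connected'', which holds because $\h$ is indecomposable). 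Thus \ref{c2} reduces to the elementary fact that a connected graph $\bar{G}$ on $n\ge 3$ vertices has two distinct vertices $\alpha,\beta$ such that $\bar{G}-\alpha$, $\bar{G}-\beta$ and $\bar{G}-\{\alpha,\beta\}$ are all connected. I would prove this by taking a spanning tree $T$ of $\bar{G}$, choosing $\alpha,\beta$ to be two distinct leaves of $T$ (a tree on at least three vertices has at least two leaves, and two distinct leaves of such a tree are non-adjacent), and observing that $T-\alpha$, $T-\beta$, $T-\{\alpha,\beta\}$ are non-empty trees, hence connected spanning subgraphs of $\bar{G}-\alpha$, $\bar{G}-\beta$, $\bar{G}-\{\alpha,\beta\}$.

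For \ref{c3}, since $n\ge N-2\ge 5\ge 3$, part \ref{c2} supplies distinct slim vertices $\alpha,\beta$ making $\h-\alpha$, $\h-\beta$, $\h-\{\alpha,\beta\}$ indecomposable; each of these is still fat with the single fat vertex $w$ and has at least $n-2\ge 3\ge 2$ slim vertices, hence lies in $\fO$, and being a Hoffman subgraph of $\h\in\fH$ it lies in $\bar{\fH}=\fH$. I would then apply \ref{c1} to each of them: $\h-\alpha$ and $\h-\beta$ have $n-1\ge N-3\ge(N-1)/2$ slim vertices (valid since $N\ge 5$), while $\h-\{\alpha,\beta\}$ has $n-2\ge N-4\ge(N-1)/2$ slim vertices (valid precisely because $N\ge 7$), so the three slim subgraphs are connected. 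The only genuinely nontrivial step is the spanning-tree observation in \ref{c2}, which is routine; the point requiring care is the bookkeeping in \ref{c3}, where the hypothesis $N\ge 7$ is consumed exactly at the tightest place, namely keeping Lemma~\ref{prop:disconn} (via \ref{c1}) applicable to $\h-\{\alpha,\beta\}$ on $n-2$ slim vertices, together with the routine checks that every subgraph arising is non-empty and has a unique fat vertex.
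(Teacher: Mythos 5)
Your proposal is correct and follows essentially the same route as the paper's proof: Lemma~\ref{prop:disconn} for part~(\ref{c1}), Lemma~\ref{lem:apple} together with a spanning tree of $\bar{G}$ (the paper merely asserts the existence of the two vertices, which your leaf argument supplies) for part~(\ref{c2}), and then feeding~(\ref{c2}) into~(\ref{c1}) via $\fH=\bar{\fH}$ and the bound $n-2\geq N-4\geq (N-1)/2$ for part~(\ref{c3}). Your extra bookkeeping (membership of $\h-X$ in $\fO$ and in $\fH$, non-emptiness, the single fat vertex) just makes explicit what the paper leaves implicit.
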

\begin{proof}
If $n \geq (N-1)/2$, i.e., $N \in \{7,8,\ldots,2n+1\}$, then the slim subgraph of $\h$ is connected by Lemma~\ref{prop:disconn}, which proves that (\ref{c1}) holds.

We assume $n \geq 3$, and prove that (\ref{c2}) holds.
Note that $\h$ is not isomorphic to $\h_2$. 
Let $G$ be the slim subgraph of $\h$.
Since $\h$ is indecomposable, $\bar{G}$ is connected by Lemma~\ref{lem:apple}.
Hence $\bar{G}$ has a spanning tree $T$.
Since $|V(T)|=|V(\bar{G})|=n\geq 3$, there exist two vertices $\alpha$ and $\beta$ of $T$ such that $T-\alpha$, $T-\beta$ and $T-\{\alpha,\beta\}$ are connected.
Since $\hind{V(\overline{T-X})}{\h}=\h-X$ for any $X\subset V(T)$, it follows from Lemma~\ref{lem:apple} that all of $\h-\alpha$, $\h-\beta$ and $\h-\{\alpha,\beta\}$ are indecomposable, which proves that (\ref{c2}) holds.

To prove that (\ref{c3}) holds, we consider the case $n \geq N-2$.
Since $n \geq 3$, we can take two slim vertices $\alpha$ and $\beta$ of $\h$ satisfying the condition in (\ref{c2}).
Fix $X\in \{\{\alpha \},\{\beta \},\{\alpha ,\beta \}\}$.
It suffices to show that the slim subgraph of $\h-X$ is connected.
Since $\h\in \fO$ and $\h-X$ is indecomposable, we have $\h-X\in \fO$.
Furthermore, 
\begin{align*}
|V_s(\h-X)|\geq  n-2 \geq N-4 \geq \frac{N-1}{2}
\end{align*}
since $N\geq 7$.
Hence by applying (\ref{c1}) to $\h-X$, the slim subgraph of $\h-X$ is connected.
This completes the proof of Lemma~\ref{lem:conn2}
\end{proof}

\begin{rem}
In Lemma~\ref{lem:conn2}~\eqref{c2} and~\eqref{c3}, we can find two slim vertices $\alpha$ and $\beta$ assuring us that $\h-\{\alpha,\beta\}$ has good properties. One might notice that the above fact is not used in this paper. 
Actually it will be used in our following paper and gives almost no influence to the shortness of the proof. Thus we give it in the lemma.
\end{rem}
\subsection{Order of Hoffman graphs}\label{sec:order}

\begin{dfn}[\bf{Family $\fH(m)$}]
\label{dfn:order}
Let $\fH$ be a non-empty subfamily of $\fO$.
In the remaining of this paper, we fix an order $\g_0,\g_1,\ldots $ of the elements of $\bar{\fH}$ so that
$$
|V_s(\g_0)|\leq |V_s(\g_1)|\leq \cdots.
$$
For each $m \in \mathbb{N}\cup \{0\}$, let $\fH(m):=\{\g_i :0\leq i\leq \min\{m, |\bar{\fH}|-1\}\}$.
\end{dfn}

\begin{lem}
\label{rem:LG}
Let $\fH \subset \fO$ be a non-empty family, and let $\g_{0},\g_{1},\ldots $ be as in Definition~\ref{dfn:order}.
Then the following hold:
\begin{enumerate}
\item
We have $\g_{0}\simeq \h_{2}$.
In particular, $\fH(0)=\{\h_{2}\}$.
\item
If $|\fH|\geq 2$, then $\g_{1}\simeq \h_{3}$.
In particular, $\fH(1)=\{\h_{2},\h_{3}\}$.
\item
If $|\fH|\geq 3$, then $|V_{s}(\g_{i})|\geq 3$ for all $i=2,3, \ldots $.
\end{enumerate}
\end{lem}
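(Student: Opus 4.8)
The plan is to reduce the whole lemma to two structural facts: up to isomorphism, $\h_2$ is the only member of $\fO$ with exactly one slim vertex, and $\h_3$ is the only member of $\fO$ with exactly two slim vertices. Granting these, the statement becomes bookkeeping about the sorted list $\g_0,\g_1,\dots$. Indeed $\bar{\fH}\subseteq\fO$ and $\h_2\in\bar{\fH}$ always, while every member of $\bar{\fH}$ other than $\h_2$ lies in $\fO\setminus\{\h_2\}$ and hence has at least two slim vertices, with equality forcing isomorphism to $\h_3$. So, sorting by $|V_s(\cdot)|$, the copy of $\h_2$ comes first, a copy of $\h_3$ (if present) comes second, and every remaining member has at least three slim vertices. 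Since $|\bar{\fH}|\geq 1$, this already gives $\g_0\simeq\h_2$ and $\fH(0)=\{\h_2\}$.

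The first structural fact is immediate from $|V_s(\h_2)|=1$. For the second, and also to place a copy of $\h_3$ inside $\bar{\fH}$, the key tool is Lemma~\ref{lem:apple}. Let $\h\in\fO\setminus\{\h_2\}$ and let $w$ be its unique fat vertex; since $\h$ is fat and $w$ is its only fat vertex, every slim vertex of $\h$ is adjacent to $w$, so $N_{\h}^{f}(x)=\{w\}$ for every slim $x$ and therefore $\hind{V_s(\h)}{\h}=\h$. Applying Lemma~\ref{lem:apple} with $G$ the slim subgraph of $\h$ (non-empty because $|V_s(\h)|\geq 2$), the indecomposability of $\h$ shows that the complement of the slim subgraph of $\h$ is connected; in particular that slim subgraph is not complete, so $\h$ has two non-adjacent slim vertices $x,y$, and then $\ind{\{x,y,w\}}{\h}\simeq\h_3$. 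Hence $\h_3$ is a Hoffman subgraph of every element of $\fO\setminus\{\h_2\}$, so $\h_3\in\bar{\fH}$ as soon as $\fH$ contains a member not isomorphic to $\h_2$. The same reasoning applied to an arbitrary $\m\in\fO$ with $|V_s(\m)|=2$ gives the uniqueness half: such $\m$ is not $\h_2$, so Lemma~\ref{lem:apple} forces the two slim vertices of $\m$ to be non-adjacent while both are adjacent to the unique fat vertex, whence $\m\simeq\h_3$.

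It remains to assemble the three parts. Part (1) is the observation in the first paragraph. For part (2), $|\fH|\geq 2$ produces a member of $\fH$ not isomorphic to $\h_2$, hence $\h_3\in\bar{\fH}$ and $|\bar{\fH}|\geq 2$; since $\h_3$ is the only member of $\fO$ with two slim vertices and every member of $\bar{\fH}$ outside $\{\h_2,\h_3\}$ has at least three, the sorted order yields $\g_1\simeq\h_3$ and $\fH(1)=\{\h_2,\h_3\}$. For part (3), $|\fH|\geq 3$ together with $\fH\subseteq\bar{\fH}$ gives $|\bar{\fH}|\geq 3$, so $\g_2,\g_3,\dots$ are defined, and each of them is non-isomorphic to both $\g_0\simeq\h_2$ and $\g_1\simeq\h_3$, hence lies in $\fO\setminus\{\h_2,\h_3\}$ and has at least three slim vertices. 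I do not anticipate a genuine obstacle; the only points needing care are the identity $\hind{V_s(\h)}{\h}=\h$, which is what lets Lemma~\ref{lem:apple} be applied to the entire slim subgraph, and the small-cardinality checks that guarantee $\g_1$ and $\g_2$ exist precisely when the statement refers to them.
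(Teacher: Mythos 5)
Your proposal is correct and follows essentially the same route as the paper: both apply Lemma~\ref{lem:apple} (after noting $\hind{V_s(\h)}{\h}=\h$ for $\h\in\fO\setminus\{\h_2\}$) to extract two non-adjacent slim vertices and hence a copy of $\h_3$, and then use that $\h_3$ is the unique element of $\fO$ with exactly two slim vertices to read off (1)--(3) from the sorted order. The only difference is cosmetic: you also spell out, again via Lemma~\ref{lem:apple}, why $\h_3$ is the unique such element, a fact the paper simply asserts.
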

\begin{proof}
By the definition of $\bar{\fH}$, (1) clearly holds. 

Assume that $|\fH |\geq 2$.
Then there exists an indecomposable Hoffman graph $\g \in \fH \setminus \{ \h_2 \}$.
Note that $|V_{s}(\g)|\geq 2$ and $\hind{V_{s}(\g)}{\g}=\g$.
By applying Lemma~\ref{lem:apple} with $\h = \g$, the complement of $\ind{V_{s}(\g)}{\g}$ is connected, and hence $\g$ has two non-adjacent slim vertices $x$ and $y$.
It follows that $\g$ has a Hoffman subgraph $\hind{\{x,y\}}{\g}\simeq \h_{3}$.
On the other hand, since $\h_3$ is the unique Hoffman graph in $\fO$ with exactly two slim vertices, $\h_3 \in \bar{\fH}$.
Consequently, $\g_1 \simeq \h_3$, which proves (2).

As we mentioned above, $\h_{3}$ is the unique Hoffman graph in $\fO$ with exactly two slim vertices.
This together with (2) leads to (3).
\end{proof}

\begin{lem}
\label{lem:Hm}
For $\fH \subset \fO$ and $m \in \mathbb{N}\cup \{0\}$, we have $\fH(m)=\overline{\fH(m)}$.
\end{lem}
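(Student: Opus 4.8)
The plan is to prove the two inclusions $\fH(m)\subseteq\overline{\fH(m)}$ and $\overline{\fH(m)}\subseteq\fH(m)$ separately, reading membership up to isomorphism as usual (and understanding $\fH$ to be non-empty, since otherwise $\fH(m)$ is not even defined). First I would note that $\fH(m)\subseteq\bar{\fH}\subseteq\fO$, so that $\overline{\fH(m)}$ makes sense. The forward inclusion is then immediate from Definition~\ref{dfn:OandbarH}: any $\fG\subseteq\fO$ satisfies $\fG\subseteq\overline{\fG}$, because each $\g\in\fG$ is either isomorphic to $\h_2$ or is a Hoffman subgraph of itself, an element of $\fG$ lying in $\fO$; apply this with $\fG=\fH(m)$.

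For the reverse inclusion I would take $\h\in\overline{\fH(m)}$ and exhibit it as one of $\g_0,\dots,\g_{\min\{m,|\bar{\fH}|-1\}}$. If $\h\simeq\h_2$, then since $\g_0\simeq\h_2$ by Lemma~\ref{rem:LG} and $0\leq\min\{m,|\bar{\fH}|-1\}$ (as $m\geq 0$ and $\h_2\in\bar{\fH}$ gives $|\bar{\fH}|\geq 1$), we are done. Otherwise $\h\in\fO\setminus\{\h_2\}$ and $\h$ is a Hoffman subgraph of some element of $\fH(m)$; replacing that element by an isomorphic copy, I may assume it is $\g_j$ with $j\leq\min\{m,|\bar{\fH}|-1\}$. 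Since every element of $\fO\setminus\{\h_2\}$ has at least two slim vertices and exactly one fat vertex, $|V_s(\g_j)|\geq|V_s(\h)|\geq 2$, so $\g_j\not\simeq\h_2$; hence by the definition of $\bar{\fH}$ the graph $\g_j$ is a Hoffman subgraph of some element of $\fH$, and therefore so is $\h$. As $\h\in\fO$, this yields $\h\in\bar{\fH}$, say $\h\simeq\g_i$ with $0\leq i\leq|\bar{\fH}|-1$.

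It remains to compare $\g_i$ with $\g_j$, and this is the one point needing a little care. Since $\h$ is an induced Hoffman subgraph of $\g_j$, we have $V_s(\h)\subseteq V_s(\g_j)$ and $V_f(\h)\subseteq V_f(\g_j)$, so $|V_s(\h)|\leq|V_s(\g_j)|$. If this inequality is strict, then $|V_s(\g_i)|<|V_s(\g_j)|$, which by the monotonicity of the chosen ordering forces $i<j\leq m$, so $\h\in\fH(m)$. If instead $|V_s(\h)|=|V_s(\g_j)|$, then $V_s(\h)=V_s(\g_j)$; since $\h$ and $\g_j$ both lie in $\fO\setminus\{\h_2\}$ each has exactly one fat vertex, hence $V_f(\h)=V_f(\g_j)$ and so $\h=\g_j\in\fH(m)$. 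Either way $\h\in\fH(m)$, completing the proof. The only mildly delicate step is this last tie-breaking observation that a same–slim-count $\fO$-subgraph of $\g_j$ must coincide with $\g_j$; everything else is a direct unwinding of Definitions~\ref{dfn:OandbarH} and~\ref{dfn:order}.
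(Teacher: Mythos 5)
Your proof is correct and follows essentially the same route as the paper's: the forward inclusion is immediate from Definition~\ref{dfn:OandbarH}, and for the reverse inclusion you split off the $\h_2$ case, place $\h$ inside some $\g_j\in\fH(m)$, and compare slim-vertex counts using the monotone ordering of Definition~\ref{dfn:order}, exactly as the paper does. The only difference is presentational: you make explicit the tie-breaking step (equal slim count forces $\h=\g_j$ via the unique fat vertex), which the paper asserts without elaboration.
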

\begin{proof}
By the definition of $\overline{\fH(m)}$, we obtain $\fH(m) \subset \overline{\fH(m)}$.
Hence it suffices to show that $\g \in \fH(m)$ for every $\g\in \overline{\fH(m)}$.
Let $\g_{0},\g_{1},\ldots $ be as in Definition~\ref{dfn:order}.
If $\g \simeq \h_2$, then $\g \in \fH(m)$ since $\g_{0} \simeq \h_{2}$ and $\h_{2}\in \fH(m)$, 
Thus we may assume that $\g \not\simeq \h_2$.
Then there exists $\g_i\in \fH(m)$ such that $\g$ is a Hoffman subgraph of $\g_i$.
If $|V_s(\g)| = |V_s(\g_i)|$, then $\g = \g_i$, and so $\g \in \fH(m)$, as desired.
Thus we may assume that $|V_s(\g)| < |V_s(\g_i)|$.
Since $\g \in \fO$ and $\g$ is a subgraph of $\g_i \in \fH$, we have $\g \in \bar{\fH}$.
Hence there exists an index $j$ with $j<i~(\leq m)$ such that $\g \simeq \g_j$, and so $\g \in \fH(m)$.
\end{proof}

\begin{lem}
\label{lem:finite}
Let $\fH\subset \fO$ be a family with $\h_2 \in \fH$, and let $G$ be a slim $\fH$-line graph.
Then the following conditions are equivalent:
\begin{enumerate}
\item \label{z1}
$G$ has a unique strict $\bar{\fH}$-cover up to equivalence.
\item \label{z2}
For every $m \in \mathbb{N}\cup \{0\}$, if $G$ is a slim $\fH(m)$-line graph, then $G$ has a unique strict $\fH(m)$-cover up to equivalence.
\end{enumerate}
\end{lem}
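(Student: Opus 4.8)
\emph{Proof proposal.}

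The plan is to show that, for the fixed graph $G$, the a priori infinite family $\bar{\fH}$ can be replaced by a single finite truncation $\fH(M)$ of it in such a way that the strict $\bar{\fH}$-covers of $G$ and the strict $\fH(M)$-covers of $G$ are literally the same Hoffman graphs, carrying the same notion of equivalence. Once this identification is in place, both implications are short.

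First I would record a boundedness observation. Let $\h=\bigoplus_{i=0}^{k}\h^i$ be any strict $\bar{\fH}$-cover of $G$. Then $V_s(\h)=V_s(G)$, and by condition~(\ref{s2}) of Definition~\ref{definition:sum} (together with associativity of $\oplus$) the slim vertex sets $V_s(\h^i)$ form a partition of $V_s(G)$; in particular each addend $\h^i\in\bar{\fH}$ satisfies $|V_s(\h^i)|\le |V_s(G)|$. Since $\bar{\fH}\subset\fO$ and, for each fixed $t$, there are only finitely many Hoffman graphs in $\fO$ with exactly $t$ slim vertices (a graph on $t+1$ vertices with a labeling), the set $\{\g\in\bar{\fH}:|V_s(\g)|\le |V_s(G)|\}$ is finite. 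Because the enumeration $\g_0,\g_1,\dots$ of $\bar{\fH}$ in Definition~\ref{dfn:order} is non-decreasing in the number of slim vertices, this finite set is contained in $\fH(M)$ for a suitable $M$. I would then verify that for this $M$ the strict $\fH(M)$-covers of $G$ are exactly its strict $\bar{\fH}$-covers: the inclusion ``$\subseteq$'' is immediate from $\fH(M)\subset\bar{\fH}$, while ``$\supseteq$'' follows because, by the boundedness observation, every addend of a strict $\bar{\fH}$-cover of $G$ lies in $\{\g\in\bar{\fH}:|V_s(\g)|\le |V_s(G)|\}\subset\fH(M)$. Moreover the notion of equivalence (an isomorphism restricting to $\id_{V(G)}$) does not depend on which family the covers are attributed to. Hence $G$ has a unique strict $\fH(M)$-cover up to equivalence if and only if it has a unique strict $\bar{\fH}$-cover up to equivalence.

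With this identification, the two implications follow. For $(\ref{z2})\Rightarrow(\ref{z1})$: since $G$ is a slim $\fH$-line graph, Lemma~\ref{lem:ExiCov} yields a strict $\bar{\fH}$-cover of $G$, which by the previous paragraph is a strict $\fH(M)$-cover; thus $G$ is a slim $\fH(M)$-line graph, and applying hypothesis~(\ref{z2}) with $m=M$ gives a unique strict $\fH(M)$-cover, hence a unique strict $\bar{\fH}$-cover. For $(\ref{z1})\Rightarrow(\ref{z2})$: fix any $m$ with $G$ a slim $\fH(m)$-line graph. Applying Lemma~\ref{lem:ExiCov} to the subfamily $\fH(m)\subset\fO$ together with Lemma~\ref{lem:Hm} (which gives $\overline{\fH(m)}=\fH(m)$), $G$ has at least one strict $\fH(m)$-cover; and since $\fH(m)\subset\bar{\fH}$, every strict $\fH(m)$-cover of $G$ is in particular a strict $\bar{\fH}$-cover, so by~(\ref{z1}) there is at most one such up to equivalence. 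Therefore $G$ has a unique strict $\fH(m)$-cover up to equivalence, as required.

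The only points needing care are the finiteness claim (guaranteeing the existence of $M$) and the verification that $\fH(M)$ captures \emph{every} addend that can occur in a strict cover of $G$; this is the sole place where the ordering of Definition~\ref{dfn:order} and the bound by $|V_s(G)|$ enter, and it is precisely what legitimizes replacing the infinite closure $\bar{\fH}$ by a finite $\fH(m)$. I do not expect any genuinely difficult step beyond this bookkeeping.
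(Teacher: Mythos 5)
Your proposal is correct and takes essentially the same route as the paper: direction (\ref{z1})$\Rightarrow$(\ref{z2}) via Lemma~\ref{lem:ExiCov} and Lemma~\ref{lem:Hm} for existence plus the inclusion of strict $\fH(m)$-covers among strict $\bar{\fH}$-covers for uniqueness, and direction (\ref{z2})$\Rightarrow$(\ref{z1}) by passing to a sufficiently large truncation $\fH(M)$. Your explicit boundedness argument (addends have at most $|V_s(G)|$ slim vertices, so all strict $\bar{\fH}$-covers are strict $\fH(M)$-covers) simply spells out what the paper compresses into the choice of a ``sufficiently large $M$'' satisfying its conditions (a) and (b).
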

\begin{proof}
We first show that (\ref{z1}) implies (\ref{z2}).
Suppose that the condition~(\ref{z1}) holds, i.e., $G$ has a unique strict $\bar{\fH}$-cover $\h$.
Let $m$ be a non-negative integer such that $G$ is a slim $\fH(m)$-line graph.
Then by applying Lemma~\ref{lem:ExiCov} with $\fH = \fH(m)$ and $\h$, $G$ has a strict $\overline{\fH(m)}$-cover $\g$.
This together with Lemma~\ref{lem:Hm} implies that $\g$ is a strict $\fH(m)$-cover of $G$, and hence $\g$ is also a strict $\bar{\fH}$-cover of $G$.
By the uniqueness of $\h$ for strict $\bar{\fH}$-covers of $G$, it follows that $\h$ and $\g$ are equivalent, which leads to (\ref{z2}).

Next we show that (\ref{z2}) implies (\ref{z1}).
Suppose that the condition~(\ref{z2}) holds.
Since $G$ is a finite slim $\fH$-line graph, there exists a sufficiently large integer $M$ such that
\begin{enumerate}
\item[(a)]
$G$ is a slim $\fH(M)$-line graph, and
\item[(b)]
every strict $\bar{\fH}$-cover of $G$ is a strict $\fH(M)$-cover.
\end{enumerate}
By (\ref{z2}), $G$ has a unique strict $\fH(M)$-cover $\h$ up to equivalence, and hence $\h$ is also a strict $\bar{\fH}$-cover of $G$.
By the uniqueness of $\h$ for strict $\fH(M)$-covers of $G$, it follows from (b) that $\h$ is a unique strict $\bar{\fH}$-cover of $G$, which leads to (\ref{z1}).
\end{proof}

\section{Proof of Theorem~\ref{thm:uni}}\label{sec:cov}

Recall that the following is our main result in this paper.
\vspace{2.3mm}

\fakethm{thm:uni}{
Let $\fH \subset \fO$ be a family with $\h_2 \in \fH$, and let $N\geq 7$ be an integer.
If every connected slim $\fH$-line graph of order $N$ has exactly one strict $\bar{\fH}$-cover up to equivalence, then every connected slim $\fH$-line graph of order at least $N$ has exactly one strict $\bar{\fH}$-cover up to equivalence.}

\begin{proof}[Proof of Theorem~\ref{thm:uni}]
Let $\fH$ and $N$ be as in Theorem~\ref{thm:uni}, and let $\g_{0},\g_{1},\ldots $ be as in Definition~\ref{dfn:order}.
Suppose that every slim $\fH$-line graph of order $N$ has exactly one strict $\bar{\fH}$-cover up to equivalence.
To prove the theorem, we show that every connected slim $\fH$-line graph $G$ of order at least $N$ has exactly one strict $\bar{\fH}$-cover up to equivalence.
Considering Lemma~\ref{lem:finite}, it suffices to show that the following holds:
\begin{align}
&\mbox{For integers $m\geq 0$ and $n\geq N$, every connected slim $\fH(m)$-line graph $G$ of order $n$}\nonumber \\
&\mbox{has a unique strict $\fH(m)$-cover.}\label{main-condition}
\end{align}
We prove (\ref{main-condition}) by induction on $m+n$.
If $n=N$, then the assumption of the theorem implies (\ref{main-condition}); if $m\in \{0,1\}$, then (\ref{main-condition}) holds by Theorem~\ref{thm:GLG}, Corollary~\ref{rem:GLG} and Lemma~\ref{rem:LG} (1) and (2).
Thus we may assume $n\geq N+1$ and $m\geq 2$.

Let $G$ be a connected slim $\fH(m)$-line graph of order $n$.
If every strict $\fH(m)$-cover of $G$ is a strict $\fH(m-1)$-cover, then (\ref{main-condition}) holds by the induction hypothesis.
Thus we may assume that $G$ has a strict $\fH(m)$-cover $\n=\bigoplus_{i=0}^k \n^i$ such that $\n^0 \simeq \g_{m}$ and $\n^i \in \fH(m)$ for all $i=1, \ldots, k$.
By Lemma~\ref{rem:LG} (3), we have $|V_{s}(\n^{0})|\geq 3$.
Let $\m$ be a strict $\fH(m)$-cover of $G$.
The equivalency of $\n$ and $\m$ implies (\ref{main-condition}).
Hence our final goal in this proof is to prove that
\begin{align}
\mbox{there exists an isomorphism $\Phi :\n \to \m$ such that $\Phi|_G = \id_{V(G)}$.}\label{main-condition2}
\end{align}

\begin{clm}
\label{clm:banana}
There exist two vertices $\alpha,\beta\in V_s(\n^0)$ such that
\begin{enumerate}
\item[{\rm (a)}]
both $G-\alpha$ and $G-\beta$ are connected; and
\item[{\rm (b)}]
both $\n^0 - \alpha$ and $\n^0 - \beta$ are indecomposable.
\end{enumerate}
\end{clm}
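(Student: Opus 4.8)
The plan is to rephrase conditions (a) and (b) purely in terms of the addend $\n^0$ and then to quote the connectivity lemmas of Subsection~\ref{sec:connected}. First I would collect the structural facts. Since $\n$ is a strict $\fH(m)$-cover of $G$, the graph $G$ is exactly the slim subgraph of $\n$, and for every slim vertex $x$ of $\n$ the slim subgraph of $\n-x$ is $G-x$. Since $\n^0\simeq\g_m$ with $m\geq 2$, Lemma~\ref{rem:LG}~(3) gives $|V_s(\n^0)|\geq 3$, so $\n^0\in\fO\setminus\{\h_2\}$ and $\n^0$ is indecomposable. By Lemma~\ref{lem:Hm}, $\fH(m)=\overline{\fH(m)}$; and the hypothesis of the theorem, combined with Lemma~\ref{lem:finite} (and the elementary fact that every slim $\fH(m)$-line graph is a slim $\fH$-line graph, as $\h_2\in\fH$), shows that every connected slim $\fH(m)$-line graph of order $N$ has a unique strict $\fH(m)$-cover. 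Since the proof of Lemma~\ref{prop:disconn} only uses the uniqueness for a suitable connected graph, Lemma~\ref{lem:conn2} is available for the family $\fH(m)$.

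Next I would split according to whether $k=0$. If $k=0$, then $\n=\n^0$, so $G$ is the slim subgraph of $\n^0$ and $|V_s(\n^0)|=n\geq N+1\geq N-2$; applying Lemma~\ref{lem:conn2}~(\ref{c3}) to $\h=\n^0$ produces two distinct $\alpha,\beta\in V_s(\n^0)$ such that $\n^0-\alpha$ and $\n^0-\beta$ are indecomposable and their slim subgraphs, namely $G-\alpha$ and $G-\beta$, are connected; this is exactly (a) and (b). If $k\geq 1$, I would write $\n=\n^0\oplus\m$ with $\m:=\bigoplus_{i=1}^{k}\n^i$, which is non-empty. Applying Lemma~\ref{lem:conn2}~(\ref{c2}) to $\h=\n^0$ (using $|V_s(\n^0)|\geq 3$) gives two distinct $\alpha,\beta\in V_s(\n^0)$ with $\n^0-\alpha$ and $\n^0-\beta$ indecomposable, which is (b). For (a), since the slim subgraph $G$ of $\n=\n^0\oplus\m$ is connected and $\m$ is non-empty, Lemma~\ref{lem:conn} shows that the slim subgraph of $\n-x$ is connected for every $x\in V_s(\n^0)$; as that slim subgraph is $G-x$, in particular $G-\alpha$ and $G-\beta$ are connected.

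This argument is essentially an assembly of earlier results: the two substantive inputs — that the one-fat-vertex addend $\n^0$ can lose a suitable slim vertex while staying indecomposable (and, when it is large enough, keeping its slim subgraph connected), and that deleting a slim vertex of an addend that is joined to a non-empty remainder does not disconnect the ambient slim graph — are precisely Lemmas~\ref{lem:conn2} and \ref{lem:conn}. The one point that needs care is the case distinction on $k$: Lemma~\ref{lem:conn2}~(\ref{c3}) is applicable to $\n^0$ only when $|V_s(\n^0)|$ is large, and this is guaranteed (by $|V_s(\n^0)|=n\geq N+1$) only when $k=0$, so for $k\geq 1$ the connectivity of $G-\alpha$ and $G-\beta$ must instead be obtained from Lemma~\ref{lem:conn}.
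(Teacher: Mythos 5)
Your proof is correct and follows essentially the same route as the paper: the case $k=0$ is handled by Lemma~\ref{lem:conn2}~(\ref{c3}) applied to $\n^0$, and the case $k\geq 1$ by combining Lemma~\ref{lem:conn2}~(\ref{c2}) (for indecomposability) with Lemma~\ref{lem:conn} (for connectivity of $G-\alpha$ and $G-\beta$). Your additional remarks verifying that Lemma~\ref{lem:conn2} is applicable to the family $\fH(m)$ only make explicit what the paper leaves implicit.
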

\begin{proof}[Proof of Claim~\ref{clm:banana}]
If $k=0$ (i.e., $\n=\n^{0}$), then the claim holds by Lemma~\ref{lem:conn2}~(\ref{c3}) with $\fH=\fH(m)$ and $\h=\n^{0}$.
Thus we may assume that $k\geq 1$.
Since $|V_s(\n^0)| \geq 3$, it follows from Lemma~\ref{lem:conn2}~(\ref{c2}) that there exist vertices $\alpha,\beta\in V_s(\n^0)$ satisfying (b).
Since $\n = \n^{0} \oplus (\bigoplus _{i=1}^{k}\n^{i})$, the vertices $\alpha $ and $\beta $ satisfy (a) by Lemma~\ref{lem:conn}.
\end{proof}

Let $\alpha $ and $\beta $ be slim vertices of $\n^{0}$ as in Claim~\ref{clm:banana}.
Applying Lemma~\ref{lem:tani} with $\h=\n$, $\h^{i}=\n^{i}$ and $X=V_s(\n) \setminus \{\alpha\}$, we have
$$
\hind{ V_s(\n)\setminus \{\alpha\}}{\n} = \bigoplus_{i=0}^k \hind{V_s(\n^i)\setminus \{\alpha\}}{\n^i}.
$$
This together with Lemma~\ref{lem:tilde} implies that
\begin{align}
\tildehind{V_s(\n)\setminus \{\alpha\}}{\n}=\bigoplus_{i=0}^k \tildehind{V_s(\n^i)\setminus \{\alpha\}}{\n^i}=(\n^0-\alpha) \oplus \left(\bigoplus_{i=1}^k \n^i \right).\label{d1}
\end{align}
By the symmetry of $\alpha $ and $\beta $, we have
\begin{align}
\tildehind{V_s(\n)\setminus \{\beta\}}{\n} = \bigoplus_{i=0}^k \tildehind{V_s(\n^i)\setminus \{\beta\}}{\n^i}=(\n^0-\beta) \oplus \left(\bigoplus_{i=1}^k \n^i \right).\label{d'1}
\end{align}

Recall that $\m$ is a strict $\fH(m)$-cover of $G$.
Now we write $\m=\bigoplus_{i=0}^l\m^i$ where $\alpha \in V_s(\m^0)$ and $\m^i \in \fH(m)$ for $i=0,\ldots,l$.
By similar argument for (\ref{d1}), we obtain that
\begin{align}
\tildehind{V_s(\m)\setminus \{\alpha\}}{\m}=\bigoplus_{i=0}^l \tildehind{V_s(\m^i)\setminus \{\alpha\}}{\m^i}= \tildehind{V_s(\m^0)\setminus \{\alpha\}}{\m^0} \oplus \left(\bigoplus_{i=1}^l \m^i \right)\label{d2}
\end{align}
and
\begin{align}
\tildehind{V_s(\m)\setminus \{\beta \}}{\m}=\bigoplus_{i=0}^l \tildehind{V_s(\m^i)\setminus \{\beta \}}{\m^i}.\label{d'2}
\end{align}

Note that for $x\in \{\alpha , \beta \}$, the slim subgraphs of $\tildehind{V_s(\n)\setminus \{x\}}{\n}$ and $\tildehind{V_s(\m)\setminus \{x\}}{\m}$ are equal to $G-x$.
Since every addend appearing in (\ref{d1})--(\ref{d'2}) belongs to $\fH(m)$, it follows from (\ref{d1})--(\ref{d'2}) that
\begin{enumerate}
\item[$\bullet $]
$\tildehind{V_s(\n) \setminus \{\alpha\}}{\n}$ and $\tildehind{V_s(\m) \setminus \{\alpha\}}{\m}$ are strict $\fH(m)$-cover of $G-\alpha$; and
\item[$\bullet $]
$\tildehind{V_s(\n) \setminus \{\beta \}}{\n}$ and $\tildehind{V_s(\m) \setminus \{\beta \}}{\m}$ are strict $\fH(m)$-cover of $G-\beta $.
\end{enumerate}
Since $G-\alpha $ and $G-\beta $ are connected, this together with the induction hypothesis implies that there are isomorphisms
$$
\varphi : (\n^0-\alpha)\oplus \left(\bigoplus_{i=1}^k \n^i \right) \to \tildehind{V_s(\m^0)\setminus \{\alpha\}}{\m^0} \oplus \left(\bigoplus_{i=1}^l \m^i \right)
$$
and
$$
\psi : (\n^0-\beta) \oplus \left(\bigoplus_{i=1}^k \n^i \right) \to \bigoplus_{i=0}^l \tildehind{V_s(\m^i)\setminus \{\beta\}}{\m^i}
$$
such that $\varphi|_{G-\alpha}=\id_{V(G)\setminus \{\alpha \}}$ and $\psi|_{G-\beta}=\id_{V(G)\setminus \{\beta \}}$.

\begin{clm}
\label{clm:nasi}
We have $\m^0-\alpha =\hind{V_s(\m^0) \setminus \{\alpha\}}{\m^0}= \varphi(\n^0-\alpha) \in \fH \setminus \{ \h_2\}$.
\end{clm}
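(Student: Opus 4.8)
The plan is to feed the isomorphism $\varphi$ into Corollary~\ref{cor:irr} and then pin down the image of $\n^0-\alpha$ by tracking slim vertex sets. First I would record the structural facts about $\n^0-\alpha$: since $m\geq 2$, Lemma~\ref{rem:LG}~(3) gives $|V_s(\n^0)|=|V_s(\g_m)|\geq 3$, so $\n^0\simeq\g_m$ is not $\h_2$; hence $\n^0$ is a fat Hoffman graph with a single fat vertex, which is therefore adjacent to every slim vertex. Consequently $\n^0-\alpha$ is again a fat Hoffman graph with exactly one fat vertex and $|V_s(\n^0-\alpha)|=|V_s(\n^0)|-1\geq 2$, and it is indecomposable by Claim~\ref{clm:banana}~(b); thus $\n^0-\alpha\in\fO\setminus\{\h_2\}$. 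Being (isomorphic to) an induced Hoffman subgraph of $\g_m\in\fH(m)$, Lemma~\ref{lem:Hm} puts $\n^0-\alpha$, and hence its isomorphic image $\varphi(\n^0-\alpha)$, in $\overline{\fH(m)}=\fH(m)$, and it is not $\h_2$; this already yields the membership assertion of the claim.

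Next, since $\n^0-\alpha$ is indecomposable, Corollary~\ref{cor:irr} applied to $\varphi$ shows that $\varphi(\n^0-\alpha)$ is an indecomposable addend of $\tildehind{V_s(\m^0)\setminus\{\alpha\}}{\m^0}\oplus(\bigoplus_{i=1}^l\m^i)$, and because $\varphi|_{G-\alpha}=\id$ its slim vertex set is exactly $V_s(\n^0)\setminus\{\alpha\}$. I would then show $V_s(\n^0)\subseteq V_s(\m^0)$. By uniqueness of the indecomposable decomposition (Lemma~\ref{lem:irr0}) together with the fact that $\m^1,\dots,\m^l$ are indecomposable, the only alternative is $\varphi(\n^0-\alpha)=\m^i$ for some $i\geq 1$, i.e. $V_s(\n^0)\setminus\{\alpha\}=V_s(\m^i)$, which is disjoint from $V_s(\m^0)$ and so forces $V_s(\n^0)\cap V_s(\m^0)=\{\alpha\}$. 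But then $\beta\in V_s(\n^0)\setminus\{\alpha\}\subseteq V_s(\m^i)$, so $\beta\notin V_s(\m^0)$ and $\tildehind{V_s(\m^0)\setminus\{\beta\}}{\m^0}=\m^0$ is itself an indecomposable addend of the target of $\psi$; running the analogous argument for $\beta$, the indecomposable addend $\psi(\n^0-\beta)$, whose slim set $V_s(\n^0)\setminus\{\beta\}$ contains $\alpha\in V_s(\m^0)$, must be $\m^0$, so $V_s(\m^0)=V_s(\n^0)\setminus\{\beta\}$ meets $V_s(\n^0)$ in $|V_s(\n^0)|-1\geq 2$ vertices, contradicting $V_s(\n^0)\cap V_s(\m^0)=\{\alpha\}$. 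Hence $V_s(\n^0)\subseteq V_s(\m^0)$.

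The key leverage now is that, in the fixed ordering of $\bar{\fH}$ by slim order, $\g_m$ has the largest slim order among the elements of $\fH(m)=\{\g_0,\dots,\g_m\}$; since $\m^0\in\fH(m)$, this gives $|V_s(\m^0)|\leq|V_s(\g_m)|=|V_s(\n^0)|$, and combined with $V_s(\n^0)\subseteq V_s(\m^0)$ it forces $V_s(\n^0)=V_s(\m^0)$. In particular $|V_s(\m^0)|\geq 3$, so $\m^0\not\simeq\h_2$ and therefore $\hind{V_s(\m^0)\setminus\{\alpha\}}{\m^0}=\m^0-\alpha$, a fat Hoffman graph with exactly one fat vertex. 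Finally, $\varphi(\n^0-\alpha)$ is an indecomposable addend whose slim vertex set $V_s(\m^0)\setminus\{\alpha\}$ equals all of $V_s(\tildehind{V_s(\m^0)\setminus\{\alpha\}}{\m^0})$ and is disjoint from the slim vertices of $\bigoplus_{i=1}^l\m^i$, so it is an addend of $\tildehind{V_s(\m^0)\setminus\{\alpha\}}{\m^0}$ carrying all of its slim vertices; hence $\varphi(\n^0-\alpha)=\tildehind{V_s(\m^0)\setminus\{\alpha\}}{\m^0}$. Since the left-hand side has exactly one fat vertex while the right-hand side has one fat vertex plus one for each $\h_1$-addend of $\m^0-\alpha$, the decoration must be trivial here, i.e. $\tildehind{V_s(\m^0)\setminus\{\alpha\}}{\m^0}=\hind{V_s(\m^0)\setminus\{\alpha\}}{\m^0}=\m^0-\alpha$, which completes the chain of equalities.

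I expect the main obstacle to be the middle step: excluding the possibility that $\varphi$ sends $\n^0-\alpha$ onto one of the addends $\m^i$ with $i\geq 1$. A purely local comparison at $\alpha$ does not settle this, so one must use both $\alpha$ and $\beta$ (hence both $\varphi$ and $\psi$) together with $|V_s(\n^0)|\geq 3$; once $V_s(\n^0)\subseteq V_s(\m^0)$ is in hand, the extremality of $\g_m$ and straightforward bookkeeping with Lemmas~\ref{lem:irr0} and~\ref{lem:tani} and Corollary~\ref{cor:irr} finish the job.
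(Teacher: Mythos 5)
Your proposal is correct and takes essentially the same route as the paper: both rest on Corollary~\ref{cor:irr} applied to $\varphi$ and $\psi$, the uniqueness of indecomposable decompositions (Lemma~\ref{lem:irr0}), the hypothesis $|V_s(\n^0)|\geq 3$, and the maximality of $|V_s(\g_m)|$ in $\fH(m)$ to force $\varphi(\n^0-\alpha)=\tildehind{V_s(\m^0)\setminus\{\alpha\}}{\m^0}=\m^0-\alpha$. The only difference is organizational: the paper argues directly, using $\psi$ to place a third slim vertex $\gamma\in V_s(\n^0)\setminus\{\alpha,\beta\}$ inside $V_s(\m^0)$ before running the size comparison, whereas you obtain the same containment $V_s(\n^0)\subset V_s(\m^0)$ by excluding the alternative $\varphi(\n^0-\alpha)=\m^i$ ($i\geq 1$) via a contradiction involving $\psi$.
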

\begin{proof}[Proof of Claim~\ref{clm:nasi}]
Recall that $|V_{s}(\n^{0})|\geq 3$.
Let $\gamma \in V_s(\n^0)\setminus \{\alpha,\beta\}$.
We first prove that
\begin{align}
\gamma  \in V_s(\m^0).\label{cond-clm:nasi}
\end{align}
Since $\alpha \in V(\n^{0})$ and $\psi|_{G-\beta}=\id_{V(G)\setminus \{\beta \}}$, we have $\alpha \in V(\psi(\n^0-\beta))$.
Hence $\alpha $ is a common vertex of $\psi(\n^0-\beta)$ and $\m^{0}$.
In particular, $V_{s}(\psi(\n^0-\beta))\cap V_{s}(\m^{0})\neq \emptyset $.
On the other hand, since $\n^0-\beta$ is indecomposable, it follows from Corollary~\ref{cor:irr} that $\psi(\n^0-\beta)$ is also indecomposable.
Consequently, $\psi(\n^0-\beta)$ is an indecomposable addend of $\bigoplus_{i=0}^l \tildehind{V_s(\m^i)\setminus \{\beta\}}{\m^i}$ intersecting with $V_{s}(\m^{0})$.
This implies that $\psi(\n^0-\beta)$ is an indecomposable addend of $\tildehind{V_s(\m^0)\setminus \{\beta\}}{\m^0}$.
Since $\psi|_{G-\beta}=\id_{V(G)\setminus \{\beta\}}$ and $\gamma $ is a vertex of $\n^{0}-\beta$, this implies that $\gamma $ is a vertex of $\m^{0}$, which proves (\ref{cond-clm:nasi}).

Since $\gamma \in V(\n^{0})$ and $\varphi|_{G-\alpha}=\id_{V(G)\setminus \{\alpha \}}$, we have $\gamma \in V(\varphi (\n^{0}-\alpha ))$.
This together with (\ref{cond-clm:nasi}) implies that $\gamma$ is a common slim vertex of $\varphi (\n^{0}-\alpha )$ and $\m^0$.
In particular, $V_{s}(\varphi (\n^0-\alpha ))\cap V_{s}(\m^{0})\neq \emptyset $.
On the other hand, since $\n^0-\alpha $ is indecomposable, it follows from Corollary~\ref{cor:irr} that $\varphi (\n^0-\alpha )$ is also indecomposable.
Consequently, $\varphi (\n^0-\alpha )$ is an indecomposable addend of $\bigoplus_{i=0}^k \tildehind{V_s(\m^i)\setminus \{\alpha \}}{\m^i}$ intersecting with $V_{s}(\m^{0})$.
This implies that $\varphi (\n^0-\alpha )$ is an indecomposable addend of $\tildehind{V_s(\m^0)\setminus \{\alpha \}}{\m^0}$, and hence $V_{s}(\varphi (\n^{0}-\alpha ))\subset V_s(\m^0)\setminus \{\alpha\}$.
Since $\m^0, \n^0 \in \fH(m)$, $\n^0 \simeq \g_m$ and $|V_{s}(\g_{m})|=\max\{|V_{s}(\g)|:\g\in \fH(m)\}$, we have
$$
|V_s(\g_m)|-1 \geq |V_s(\m^0)\setminus \{\alpha\}|\geq |V_{s}(\varphi (\n^{0}-\alpha ))|=|V_s(\n^0-\alpha)| = |V_s(\g_m)|-1.
$$
This chain of inequalities forces $\varphi(\n^0-\alpha)=\tildehind{V_s(\m^0) \setminus \{\alpha\}}{\m^0}$.
Since $|V_{s}(\n^{0})|\geq 3$, this implies that $\tildehind{V_s(\m^0) \setminus \{\alpha\}}{\m^0}=\m^{0}-\alpha $.
Consequently, we have $\m^0-\alpha=\varphi(\n^0-\alpha)$.

Recall that $\n^{0}\in \fH(m)$, $|V_{s}(\n^{0}-\alpha )|\geq 2$ and $\n_{0}-\alpha $ is indecomposable.
Hence $\n^0 - \alpha\in \fH \setminus \{\h_2\}$.
This completes the proof of the claim.
\end{proof}

Let $\Phi :V(\n) \to V(\m)$ be the mapping such that
$$
\Phi (x)=
\begin{cases}
\alpha & (x=\alpha )\\
\varphi (x) & (x\neq \alpha ).
\end{cases}
$$
Now we show that $\Phi $ satisfies (\ref{main-condition2}).

Since $\varphi $ is an isomorphism from $\n-\alpha $ to $\m-\alpha $ and $\Phi (\alpha )=\alpha $, $\Phi $ is a bijection from $V(\n)$ to $V(\m)$ such that $\Phi(V_{s}(\n_{0}))=V_{s}(\m_{0})$ and $\Phi(V_{f}(\n_{0}))=V_{f}(\m_{0})$.
Since $\varphi |_{V_{s}(\h)\setminus \{\alpha \}}=\varphi |_{G-\alpha }=\id_{V(G)\setminus \{\alpha \}}$ and $\Phi (\alpha )=\alpha $, we have
\begin{align}
\Phi |_{G}=\id_{V(G)}.\label{clm:saigo:0}
\end{align}
Since $\varphi $ is an isomorphism from $\n-\alpha $ to $\m-\alpha $, it follows from Claim~\ref{clm:nasi} that $|V_{f}(\n^{0})|=|V_{f}(\m^{0})|=1$ and $V_{f}(\m^{0})=\{\Phi (w)\}$ where $w$ is the unique fat vertex of $\n^{0}$.
Since $\alpha $ is a common slim vertex of $\n^{0}$ and $\m^{0}$, $N_{\n}^{f}(\alpha )=N_{\n^{0}}^{f}(\alpha )=\{w\}$ and $N_{\m}^{f}(\alpha )=N_{\m^{0}}^{f}(\alpha )=\{\Phi (w)\}$, i.e.,
\begin{align*}
\mbox{for $x\in V_{f}(\n) $, $\alpha \sim x$ in $\n $ if and only if $\Phi(\alpha ) \sim \Phi(x)$ in $\m$.}
\end{align*}
Since $\varphi $ is an isomorphism from $\n-\alpha $ to $\m-\alpha $ and $\Phi |_{G}=\id_{V(G)}$ by (\ref{clm:saigo:0}), this implies that
\begin{align*}
\mbox{for $x,y\in V(\n) $, $x \sim y$ in $\n $ if and only if $\Phi(x) \sim \Phi(y)$ in $\m$.}
\end{align*}
Hence $\Phi $ is an isomorphism from $\n$ to $\m$.
This together with (\ref{clm:saigo:0}) leads to (\ref{main-condition2}).
\end{proof}

\section{Computer approach}	\label{sec:computer}

\begin{dfn}[{\bf Integer $N_{\fH}$}]
For a family $\fH \subset \fO$ with $\h_2 \in \fH$, if there exists an integer $N\geq 7$ satisfying that every connected slim $\fH$-line graph of order $N$ has exactly one strict $\bar{\fH}$-cover up to equivalence, then let $N_{\fH}$ be the smallest integer at least $7$ satisfying it; otherwise, let $N_{\fH}=\infty$.
\end{dfn}

Theorem~\ref{thm:uni} asserts that, for a family $\fH \subset \fO$ with $\h_2 \in \fH$, every connected slim $\fH$-line graphs of order at least $N_{\fH}$ has a strict $\bar{\fH}$-cover.
In this section, we explain a way with computer to verify whether every connected slim $\fH$-line graph of order $N$ has a unique strict $\fH$-cover for an integer $N\geq 7$ and a family $\fH$ of Hoffman graphs.
We start with a lower bound of $N_{\fH}$.

\begin{prop}
\label{prop:lb}
	Let $\fH \subset \fO$ be a family with $\h_2 \in \fH$.
	Then $N_{\fH}\geq 2|V_s(\h)|+2$ for every $\h \in \bar{\fH}$ whose slim subgraph is disconnected.
\end{prop}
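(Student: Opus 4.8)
The plan is to show directly that no integer $N$ with $7\le N\le 2|V_s(\h)|+1$ can witness the defining property of $N_{\fH}$, which forces $N_{\fH}\ge 2|V_s(\h)|+2$. Put $n:=|V_s(\h)|$. If $n\le 2$ there is nothing to prove, since then $2n+2\le 6<7\le N_{\fH}$; so I may assume $n\ge 3$. Because the slim subgraph of $\h$ is disconnected, $\h$ has at least two slim vertices, so $\h\not\simeq\h_2$ and hence $\h\in\bar{\fH}\setminus\{\h_2\}$; thus $\h$ is an indecomposable fat Hoffman graph with a unique fat vertex (necessarily adjacent to all of its slim vertices), and by the definition of $\bar{\fH}$ it is a Hoffman subgraph of some $\g^{*}\in\fH$. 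It then suffices to prove that for every integer $N$ with $7\le N\le 2n+1$ there is a \emph{connected} slim $\fH$-line graph of order $N$ admitting two non-equivalent strict $\bar{\fH}$-covers.

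Fix such an $N$; I would repeat, essentially verbatim, the construction in the proof of Lemma~\ref{prop:disconn}. Write the slim subgraph of $\h$ as $H\sqcup H'$ with $H,H'$ nonempty; using $2\le\lfloor\frac{N-1}{2}\rfloor\le\lceil\frac{N-1}{2}\rceil\le|V(H)|+|V(H')|=n$, choose nonempty \emph{induced} subgraphs $A,B$ of $H$ and $A',B'$ of $H'$ with $|V(A)|+|V(A')|=\lfloor\frac{N-1}{2}\rfloor$ and $|V(B)|+|V(B')|=\lceil\frac{N-1}{2}\rceil$, take pairwise vertex-disjoint copies $A_0,B_0,A_0',B_0'$, and let $G$ arise by adjoining a vertex $x$ joined to all of $A_0\sqcup B_0\sqcup A_0'\sqcup B_0'$; then $|V(G)|=N$ and $G$ is connected. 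Form $\n$ and $\m$ from $G$ by adding fat vertices $z,w$ with the two crossed neighbourhoods as there, so that $\n=\ind{V(A_0)\sqcup V(A_0')\sqcup\{z\}}{\n}\oplus\ind{\{x,z,w\}}{\n}\oplus\ind{V(B_0)\sqcup V(B_0')\sqcup\{w\}}{\n}$, and similarly for $\m$. The middle addend is isomorphic to $\h_2\in\fH$; each outer addend has a single fat vertex adjacent to all of its (at least two) slim vertices and slim subgraph a copy of $A\sqcup A'$ (resp.\ $B\sqcup B'$), hence is isomorphic to $\hind{V(A)\sqcup V(A')}{\h}$ (resp.\ $\hind{V(B)\sqcup V(B')}{\h}$), which is indecomposable by Lemma~\ref{lem:apple} since $\overline{A\sqcup A'}$ is connected (being the complement of a disjoint union of two nonempty graphs), so it lies in $\fO$ and, being a Hoffman subgraph of $\h$ and therefore of $\g^{*}\in\fH$, it lies in $\bar{\fH}$. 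Consequently $\n$ and $\m$ are strict $\bar{\fH}$-covers of $G$, and they are non-equivalent exactly as in Lemma~\ref{prop:disconn}: an isomorphism fixing $G$ would force $N_{\n}^{s}(z)=V(A_0)\sqcup V(A_0')\sqcup\{x\}$ to equal $V(A_0)\sqcup V(B_0')\sqcup\{x\}$ or $V(B_0)\sqcup V(A_0')\sqcup\{x\}$, which is impossible since $A_0,B_0,A_0',B_0'$ are pairwise disjoint and nonempty.

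The one genuinely new point, and the step I expect to demand the most care, is verifying that $G$ is a slim $\fH$-line graph (not merely a slim $\bar{\fH}$-line graph), since $N_{\fH}$ is phrased through $\fH$ itself. Here I would ``inflate'' the two outer addends back to copies of $\g^{*}$: take vertex-disjoint copies $\g^{*}_{1},\g^{*}_{2}$ of $\g^{*}$, rename the unique fat vertex of $\g^{*}_{1}$ as $z$ and that of $\g^{*}_{2}$ as $w$, and adjoin a slim vertex $x$ joined to $z$ and to $w$; then $\fG:=\g^{*}_{1}\oplus\ind{\{x,z,w\}}{\fG}\oplus\g^{*}_{2}$ is a legitimate sum of elements of $\fH$ — checking the five conditions of Definition~\ref{definition:sum} is routine, the essential point being that each copy of $\g^{*}$ occurs as a single addend, so no extra edges are forced inside it — and, identifying $V(A_0)\sqcup V(A_0')$ with the copy of $V(A)\sqcup V(A')$ inside $\g^{*}_{1}$ and $V(B_0)\sqcup V(B_0')$ with the copy of $V(B)\sqcup V(B')$ inside $\g^{*}_{2}$, one checks that $G$ is an induced subgraph of the slim subgraph of $\fG$ (this is where one uses that $A,B,A',B'$ were taken induced and that $\h$ is an induced Hoffman subgraph of $\g^{*}$). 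Hence $G$ is a slim $\fH$-line graph, completing the verification for this $N$. Since $N$ ranged over all of $\{7,\dots,2n+1\}$, none of these integers witnesses the defining property of $N_{\fH}$, and therefore $N_{\fH}\ge 2n+2=2|V_s(\h)|+2$.
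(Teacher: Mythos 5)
Your proof is correct, and at its core it is the same construction the paper uses: the paper's own proof of this proposition is a two-line appeal to Lemma~\ref{prop:disconn}, whose proof contains exactly the graph $G$ and the two crossed covers $\n,\m$ that you build. The difference is that you unpack that lemma instead of citing it, and in doing so you handle three points the paper's terse proof leaves implicit: (i) the statement concerns $\h\in\bar{\fH}$, whereas Lemma~\ref{prop:disconn} is phrased for $\h\in\fH$ with $\fH=\bar{\fH}$ (the paper's proof even writes ``take $\h\in\fH$''); you resolve this by passing to a supergraph $\g^{*}\in\fH$ of $\h$ and checking the outer addends land in $\bar{\fH}$ via Lemma~\ref{lem:apple}; (ii) $N_{\fH}$ quantifies only over \emph{connected} slim $\fH$-line graphs, while the hypothesis of Lemma~\ref{prop:disconn} quantifies over all of them, so one must note that the counterexample $G$ is connected, which you do; (iii) $G$ must be a slim $\fH$-line graph, not merely a slim $\bar{\fH}$-line graph --- the paper relies on the equivalence asserted without proof at the start of Section~\ref{sec:computer}, while you prove it for your $G$ by inflating the two outer addends to full copies of $\g^{*}$. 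So your argument is self-contained where the paper's is not, at the cost of length. One wording quibble: when you define $\fG$, the new slim vertex $x$ must be adjacent not only to $z$ and $w$ but also, as forced by Definition~\ref{definition:sum}~(\ref{s5}), to every slim vertex of $\g^{*}_{1}$ and $\g^{*}_{2}$; your subsequent verification that $G$ sits as an induced subgraph of the slim subgraph of $\fG$ shows this is what you intend, but ``joined to $z$ and to $w$'' understates the edge set of $\fG$.
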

\begin{proof}
	Take a Hoffman graph $\h \in \fH$ such that $|V_{s}(\h)|=n$ and the slim subgraph of $\h$ is disconnected.
	Then there is no integer $N$ satisfying the assumption of Lemma~\ref{prop:disconn}.
	This forces $N_{\fH}\geq 2n+2$.
\end{proof}

For a Hoffman graph $\h$, we let $\Aut(\h)$ denote the automorphism group of $\h$, and let
\begin{align*}
	\Aut^*(\h) := \{ \psi \in \Aut(\h) : \psi|_{V_s(\h)} = \id_{V_{s}(\h)} \}.
\end{align*}
For a Hoffman graph $\h$ whose indecomposable decomposition is $\bigoplus_{i \in I} \h^i$, let
\[
	\mathcal{A}(\h) := \left\{ (\psi^i)_{i \in I} \in \prod_{i \in I}\Aut^*(\h^i) : \forall i,j \in I\mbox{ with }i\neq j, \forall x \in V_f(\h^i) \cap V_f(\h^j), \psi^i(x)=\psi^j(x) \right\}.
\]

We will prove the following two propositions which give useful properties for our strategy using computer search.

\begin{prop}
\label{lem:Aut}
	Let $\h = \bigoplus_{i\in I}\h^i$ be a connected Hoffman graph with $\h^i \in \fO$ for every $i\in I$.
	If $\h$ is not isomorphic to $\h_2$, then $\Aut^*(\h) = \{\id_{V(\h)}\}$.
\end{prop}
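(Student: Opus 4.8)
The plan is to analyze an element $\psi \in \Aut^*(\h)$ by looking at how it acts on the fat vertices, using connectedness of $\h$ to propagate information between the indecomposable addends. First I would invoke Corollary~\ref{cor:irr} applied to $\psi : \h \to \h$ (writing $\h = \bigoplus_{i\in I}\h^i$ as its indecomposable decomposition): since $\psi$ fixes every slim vertex, it must permute the addends, and because $V_s(\psi(\h^i)) = V_s(\h^i)$, Lemma~\ref{lem:irr0} forces $\psi(\h^i) = \h^i$ for each $i$, i.e. $\psi|_{\h^i} \in \Aut^*(\h^i)$ for every $i$. So it suffices to show each such restriction is trivial, equivalently that $\psi$ fixes every fat vertex.

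Next I would dispose of the addends individually. Each $\h^i \in \fO$ has exactly one fat vertex when $\h^i \not\simeq \h_2$ — wait, that is wrong: $\h_2$ itself has one fat vertex and two slim vertices, while the other elements of $\fO$ have one fat vertex and $\ge 2$ slim vertices. In all cases $|V_f(\h^i)| = 1$. Hence $\psi|_{\h^i}$, being a bijection of the singleton $V_f(\h^i)$, is automatically the identity on $V_f(\h^i)$. Therefore $\psi$ fixes every fat vertex of every addend, hence every fat vertex of $\h$ (since $V_f(\h) = \bigcup_i V_f(\h^i)$), and it already fixes every slim vertex; thus $\psi = \id_{V(\h)}$.

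Actually, re-examining the hypothesis: the proposition assumes $\h$ itself (not merely each addend) is not isomorphic to $\h_2$, and $\h$ is connected. If $I$ is a single index and $\h^0 \simeq \h_2$ then $\h \simeq \h_2$, excluded; otherwise either $|I|\ge 2$ or the lone addend is in $\fO\setminus\{\h_2\}$. In every remaining case each addend has a single fat vertex, so the argument above goes through verbatim and connectedness is not even needed for this particular conclusion — it is presumably stated because it is the natural hypothesis under which the companion statement $\mathcal A(\h)$ is used. I would therefore present the clean version: each $\h^i\in\fO$ has $|V_f(\h^i)|=1$, $\psi$ preserves each addend by Corollary~\ref{cor:irr}, hence fixes its fat vertex and all its slim vertices, so $\psi=\id_{V(\h)}$.

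The main (very mild) obstacle is simply making the reduction to addends airtight: one must check that $\psi\in\Aut^*(\h)$ really does send the indecomposable decomposition of $\h$ to itself addend-by-addend (not just up to permutation), which is exactly where Corollary~\ref{cor:irr} combined with the slim-vertex-fixing hypothesis is used, together with the fact from Definition~\ref{definition:sum}~(\ref{s3}) that each addend equals $\hind{V_s(\h^i)}{\h}$ and so is determined by its slim vertex set.
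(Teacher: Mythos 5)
Your argument has a genuine gap, and it comes from misreading what $\h_2$ is. In this paper $\h_2$ has \emph{one} slim vertex and \emph{two} fat vertices (it is $\h_3$ that has two slim vertices and one fat vertex); indeed, Definition~\ref{dfn:OandbarH} lists $\h_2$ separately from the members of $\fO$ satisfying $|V_s(\h)|\geq 2$ and $|V_f(\h)|=1$ precisely because $\h_2$ fails the condition $|V_f(\h)|=1$. Consequently your central claim ``in all cases $|V_f(\h^i)|=1$'' is false, and the case you dismissed is exactly the one carrying all the content: if an addend $\h^i$ is isomorphic to $\h_2$, then an element of $\Aut^*(\h)$ fixes its unique slim vertex but could a priori swap its two fat vertices, so fixing all slim vertices does not automatically fix all fat vertices. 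This is also why your remark that connectedness is not needed is wrong: take $\h$ to be the sum of two copies of $\h_2$ sharing no fat vertex; it is disconnected, not isomorphic to $\h_2$, and swapping the two fat vertices of one copy is a nontrivial element of $\Aut^*(\h)$.

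Your reduction to the addends (via Corollary~\ref{cor:irr} together with the fact that each addend is determined by its slim vertex set; the paper does the same thing through Lemma~\ref{lem:decomp}~(1)) is fine, and it does dispose of every addend in $\fO\setminus\{\h_2\}$, since those have a single fat vertex. What is missing is the treatment of addends isomorphic to $\h_2$, which is where the paper uses both hypotheses: writing $V_s(\h^i)=\{x\}$ and $V_f(\h^i)=\{y_1,y_2\}$, connectedness of $\h$ together with $\h\not\simeq\h_2$ forces one of $y_1,y_2$ to lie in $V_f(\h^j)$ for some $j\neq i$ (otherwise, by Definition~\ref{definition:sum}~(\ref{s3}) and~(\ref{s5}), $\h^i$ would be a separate component of $\h$), and Lemma~\ref{lem:fixfat} then shows that any $\psi\in\Aut^*(\h)$ fixes such a shared fat vertex; since $\psi$ preserves the addend $\h^i$, it must also fix the remaining fat vertex. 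You would need to supply this step to complete the proof.
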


\begin{prop}
\label{lem:pc}
Let $\fH$ be a family of Hoffman graphs, and $N$ be a positive integer.
Let $\cX$ be the family of Hoffman graphs $\bigoplus_{i\in I} \h^i$ up to isomorphism such that
\begin{enumerate}
\item[{\rm (a)}]
$\h^i \in \fH$ for every $i\in I$;
\item[{\rm (b)}]
$|V_{s}(\h)|=N$; and
\item[{\rm (c)}]
the slim subgraph of $\h$ is connected.
\end{enumerate}
Let $\cY$ be the family of connected slim $\fH$-line graphs of order $N$ up to isomorphism.
Define the mapping $\Phi :\cX\to \cY$ so that $\Phi (\h)$ is the slim subgraph of $\h$.
If
\begin{enumerate}
\item[{\rm (I)}]
$\Phi $ is surjective;
\item[{\rm (II)}]
$\Aut^*(\n) = \{\id_{V(\n)}\}$ for every $\n \in \cX$;
\item[{\rm (III)}]
$|\cX| = |\cY|$; and
\item[{\rm (IV)}]
$|\Aut(\n)| = |\Aut(\Phi (\n))|$ for every $\n \in \cX$,
\end{enumerate}
then every graph in $\cY$ has a unique strict $\fH$-cover up to equivalence.
\end{prop}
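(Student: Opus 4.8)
The plan is to compare, for each connected slim $\fH$-line graph $G\in\cY$, the set $\mathrm{Cov}^{*}(G)$ of strict $\fH$-covers of $G$ taken up to equivalence with the fibre $\Phi^{-1}(G)\subseteq\cX$, and then to count. Note first that for a strict $\fH$-cover $\h$ of $G$ we have $V_{s}(\h)=V(G)$ and the slim subgraph of $\h$ equals $G$, so restriction to slim vertices is a homomorphism $\rho_{\h}:\Aut(\h)\to\Aut(G)$ whose kernel is exactly $\Aut^{*}(\h)$. First I would fix $G\in\cY$ and consider the map $\mathrm{Cov}^{*}(G)\to\Phi^{-1}(G)$ sending an equivalence class of covers to the underlying isomorphism class. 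This map is surjective: any Hoffman graph $\n$ whose isomorphism class lies in $\Phi^{-1}(G)$ has slim subgraph isomorphic to $G$, and relabelling the slim vertices of $\n$ along such an isomorphism yields a strict $\fH$-cover of $G$ isomorphic to $\n$. Moreover $\Aut(G)$ acts on $\mathrm{Cov}^{*}(G)$ by relabelling slim vertices, $\sigma\cdot[\h]:=[\sigma\cdot\h]$ (a short verification gives well-definedness), and a diagram chase shows that two classes have the same image in $\Phi^{-1}(G)$ precisely when they lie in one $\Aut(G)$-orbit; hence $\Phi^{-1}(G)=\mathrm{Cov}^{*}(G)/\Aut(G)$.

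Next I would compute the stabilizer of a class $[\h]\in\mathrm{Cov}^{*}(G)$. Chasing the relevant isomorphisms shows $\mathrm{Stab}_{\Aut(G)}([\h])=\rho_{\h}(\Aut(\h))$. Now hypotheses (II) and (IV) enter: since $\h\in\cX$, condition (II) gives $\ker\rho_{\h}=\Aut^{*}(\h)=\{\id\}$, so $\rho_{\h}$ is injective and $|\rho_{\h}(\Aut(\h))|=|\Aut(\h)|$; and condition (IV) gives $|\Aut(\h)|=|\Aut(\Phi(\h))|=|\Aut(G)|$. Therefore every such stabilizer has order $|\Aut(G)|$, i.e.\ $\Aut(G)$ acts trivially on $\mathrm{Cov}^{*}(G)$, and consequently $|\Phi^{-1}(G)|=|\mathrm{Cov}^{*}(G)|$ for every $G\in\cY$.

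Finally I would sum over $\cY$: this yields $|\cX|=\sum_{G\in\cY}|\Phi^{-1}(G)|=\sum_{G\in\cY}|\mathrm{Cov}^{*}(G)|$, while $|\cY|=\sum_{G\in\cY}1$. By condition (I) the map $\Phi$ is surjective, so $|\mathrm{Cov}^{*}(G)|=|\Phi^{-1}(G)|\geq 1$ for every $G\in\cY$; by condition (III) the two displayed sums are equal. Hence $\sum_{G\in\cY}\bigl(|\mathrm{Cov}^{*}(G)|-1\bigr)=0$ with all summands non-negative, which forces $|\mathrm{Cov}^{*}(G)|=1$ for every $G\in\cY$. That is exactly the assertion that every graph in $\cY$ has a unique strict $\fH$-cover up to equivalence.

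The step I expect to be the main obstacle is the first one: setting up the $\Aut(G)$-action on equivalence classes of covers cleanly, proving that the fibres of $\mathrm{Cov}^{*}(G)\to\Phi^{-1}(G)$ are exactly its orbits, and identifying $\mathrm{Stab}_{\Aut(G)}([\h])$ with $\rho_{\h}(\Aut(\h))$. All of this is bookkeeping with isomorphisms, but it requires keeping a firm distinction between ``isomorphic as Hoffman graphs'' and ``equivalent over $G$'' (exactly the distinction that makes Figure~\ref{K01} and Figure~\ref{fig:LG_2cov} possible); once it is in place, the hypotheses (I)--(IV) combine essentially formally.
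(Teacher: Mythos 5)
Your proposal is correct and, despite the orbit--stabilizer packaging, it is essentially the paper's own argument: your identification $\mathrm{Stab}_{\Aut(G)}([\h])=\rho_{\h}(\Aut(\h))$ together with its full order forced by (II) and (IV) is exactly the content of the paper's Lemma~\ref{lem:uni} (the restriction homomorphism $\Aut(\m)\to\Aut(G)$ is injective since $\Aut^{*}(\m)=\{\id_{V(\m)}\}$, surjective by cardinality, and one composes a given isomorphism with a correcting automorphism to obtain an equivalence), while your final summation over $\cY$ using (I) and (III) is the same finite pigeonhole the paper uses when it concludes that $\Phi$ is bijective and hence that all strict covers of a given $G$ are isomorphic. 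So: correct, and the same approach as the paper, merely reorganized.
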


\begin{lem}
\label{lem:fixfat}
Let $\h=\n\oplus \m$ be a Hoffman subgraph, and suppose that $\n$ and $\m$ are non-empty.
Then $\psi (x)=x$ for all $x \in V_f(\n) \cap V_f(\m)$ and $\psi \in \Aut^{*}(\h)$.
\end{lem}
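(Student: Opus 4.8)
The plan is to pin down each shared fat vertex by a pair of adjacent slim vertices straddling $\n$ and $\m$, and then exploit that every $\psi \in \Aut^*(\h)$ fixes all slim vertices, together with the fact that a Hoffman-graph automorphism preserves fat-neighbourhoods.

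Concretely, I would fix $x \in V_f(\n) \cap V_f(\m)$ and $\psi \in \Aut^*(\h)$, and first produce slim vertices $u \in V_s(\n)$ and $v \in V_s(\m)$ with $u \sim x$ and $v \sim x$ in $\h$. Such a $u$ exists because $x$ is a fat vertex of the Hoffman graph $\n$, so condition~(1) of Definition~\ref{dfn:Hoff} applied to $\n$ gives a slim neighbour of $x$ inside $\n$; symmetrically for $v$ inside $\m$. By Definition~\ref{definition:sum}~(\ref{s3}) we then have $x \in N_{\n}^f(u) = N_{\h}^f(u)$ and $x \in N_{\m}^f(v) = N_{\h}^f(v)$, so $x \in N_{\h}^f(u) \cap N_{\h}^f(v)$; and since $u \in V_s(\n)$, $v \in V_s(\m)$, Definition~\ref{definition:sum}~(\ref{s4}) forces $N_{\h}^f(u) \cap N_{\h}^f(v) = \{x\}$.

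The final step is to apply $\psi$. As $\psi \in \Aut^*(\h)$, it is a Hoffman-graph automorphism with $\psi(u)=u$ and $\psi(v)=v$; since it preserves adjacency and the fat/slim labelling, $\psi(N_{\h}^f(u)) = N_{\h}^f(\psi(u)) = N_{\h}^f(u)$ and likewise $\psi(N_{\h}^f(v)) = N_{\h}^f(v)$. Hence $\{\psi(x)\} = \psi\bigl(N_{\h}^f(u) \cap N_{\h}^f(v)\bigr) = N_{\h}^f(u) \cap N_{\h}^f(v) = \{x\}$, i.e.\ $\psi(x)=x$, which is exactly the assertion.

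I do not expect a genuine obstacle: the argument is a direct unwinding of Definition~\ref{definition:sum}~(\ref{s3})--(\ref{s4}) and the definition of $\Aut^*$. The one subtlety worth flagging is that the slim neighbour $u$ of $x$ must be chosen \emph{inside} $\n$ (and $v$ inside $\m$) so that~(\ref{s4}) applies with one endpoint in $V_s(\n)$ and the other in $V_s(\m)$; this is immediate because $\n$ and $\m$ are themselves Hoffman graphs, so each fat vertex of $\n$ already has a slim neighbour within $\n$.
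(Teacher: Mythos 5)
Your proposal is correct and follows essentially the same route as the paper's proof: pick slim neighbours of $x$ inside $\n$ and inside $\m$ via Definition~\ref{dfn:Hoff}~(1), use Definition~\ref{definition:sum}~(\ref{s3})--(\ref{s4}) to see that their common fat neighbourhood is exactly $\{x\}$, and then apply $\psi$, which fixes the two slim vertices and preserves fat neighbourhoods. No gaps.
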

\begin{proof}
Let $x \in V_f(\n) \cap V_f(\m)$ and $\psi \in \Aut^{*}(\h)$.
By Definition~\ref{dfn:Hoff} (1), there exist vertices $y \in V_s(\n)$ and $z \in V_s(\m)$ such that $x\sim y$ in $\n$ and $x\sim z$ in $\m$.
In particular, $|N_{\h}^{f}(y)\cap N_{\h}^{f}(z)|=1$, and so $|N_{\h}^{f}(\psi (y))\cap N_{\h}^{f}(\psi (z))|=1$.
This leads to $N_{\h}^{f}(\psi (y))\cap N_{\h}^{f}(\psi (z))=\{\psi(x)\}$.
Since $\psi (y)=y$ and $\psi (z)=z$, we have
$$
\{x\}=N_{\h}^{f}(y)\cap N_{\h}^{f}(z)=N_{\h}^{f}(\psi (y))\cap N_{\h}^{f}(\psi (z))=\{\psi(x)\},
$$
as desired.
\end{proof}

\begin{lem}
\label{lem:decomp}
Let $\h=\bigoplus_{i \in I} \h^i$ be a Hoffman graph, and suppose that $\h^i$ is non-empty and indecomposable for every $i \in I$.
Then the following hold:
\begin{enumerate}
\item
For $\psi \in \Aut^*(\h)$, we have $(\psi|_{\h^i})_{i \in I} \in \mathcal{A}(\h)$.
\item
For $(\psi^i)_{i \in I} \in \mathcal{A}(\h)$, we define the mapping $\psi : V(\h) \to V(\h)$ by
\begin{equation}
\psi(x) = \psi^i(x) \quad \text{if $x \in V(\h^i)$}.\label{K10}
\end{equation}
Then we have $\psi \in \Aut^*(\h)$.
\item
The mapping $\varphi : \Aut^*(\h) \to \mathcal{A}(\h)$ defined by $\varphi(\psi) = (\psi|_{\h^i})_{i \in I}$ is bijective.
\end{enumerate}
\end{lem}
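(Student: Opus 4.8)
The plan is to verify (1) and (2) directly and then deduce (3) formally, since the constructions in (1) and (2) will be visibly mutually inverse. The workhorse is the following strengthening of the compatibility condition, an analogue of Lemma~\ref{lem:fixfat} \emph{inside} the decomposition: if $(\psi^i)_{i\in I}\in\mathcal{A}(\h)$ and $w\in V_f(\h^i)\cap V_f(\h^j)$ with $i\ne j$, then $\psi^i(w)=\psi^j(w)=w$. To prove this, use Definition~\ref{dfn:Hoff}~(1) to pick slim vertices $u\in V_s(\h^i)$ with $w\sim u$ in $\h^i$ and $v\in V_s(\h^j)$ with $w\sim v$ in $\h^j$; by Definition~\ref{definition:sum}~(\ref{s3}) we have $N_{\h^i}^{f}(u)=N_{\h}^{f}(u)$ and $N_{\h^j}^{f}(v)=N_{\h}^{f}(v)$, so $w\in N_{\h}^{f}(u)\cap N_{\h}^{f}(v)$, which by Definition~\ref{definition:sum}~(\ref{s4}) has exactly one element, namely $w$. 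As $\psi^i\in\Aut^*(\h^i)$ fixes $u$ and preserves fatness, $\psi^i(w)\in N_{\h^i}^{f}(u)=N_{\h}^{f}(u)$; likewise $\psi^j(w)\in N_{\h}^{f}(v)$; and since $\psi^i(w)=\psi^j(w)$ by the defining condition of $\mathcal{A}(\h)$, this common value lies in $N_{\h}^{f}(u)\cap N_{\h}^{f}(v)=\{w\}$.

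For (1): given $\psi\in\Aut^*(\h)$, I would first show $\psi(V(\h^i))=V(\h^i)$. The slim vertices of $\h^i$ are fixed by $\psi$; for $w\in V_f(\h^i)$ pick a slim neighbour $u\in V_s(\h^i)$, so $\psi(w)$ is fat and adjacent to $\psi(u)=u$, whence $\psi(w)\in N_{\h}^{f}(u)=N_{\h^i}^{f}(u)\subseteq V_f(\h^i)$ by (\ref{s3}); injectivity of $\psi$ and finiteness of $V(\h^i)$ then give $\psi(V(\h^i))=V(\h^i)$. Since $\h^i$ is an induced Hoffman subgraph of $\h$ preserved set-wise by the label-preserving automorphism $\psi$, the restriction $\psi|_{\h^i}$ lies in $\Aut^*(\h^i)$; and the compatibility condition for $(\psi|_{\h^i})_{i\in I}$ is automatic, as $(\psi|_{\h^i})(w)=\psi(w)=(\psi|_{\h^j})(w)$ for any shared fat vertex $w$. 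Hence $(\psi|_{\h^i})_{i\in I}\in\mathcal{A}(\h)$.

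For (2): given $(\psi^i)_{i\in I}\in\mathcal{A}(\h)$ and $\psi$ defined by~(\ref{K10}), well-definedness holds because two distinct addends meet only in fat vertices (their slim sets are disjoint by (\ref{s2})) and the compatibility condition forces agreement there; and $\psi$ fixes every slim vertex since each lies in a unique $V_s(\h^i)$ on which $\psi^i$ is the identity. For bijectivity I would invoke the key fact to see that $((\psi^i)^{-1})_{i\in I}$ again lies in $\mathcal{A}(\h)$ (the inverses agree on shared fat vertices, both being the identity there), so its glued map $\psi'$ is defined; since each $\psi^i$ is a bijection of $V(\h^i)$, $\psi'$ is a two-sided inverse of $\psi$. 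It remains to check that $\psi$ preserves adjacency: fat–fat pairs are non-edges before and after (fatness is preserved), slim–slim pairs are unchanged (slim vertices are fixed), so only slim–fat pairs matter. For $x\in V_s(\h^i)$ and a fat vertex $w$, condition~(\ref{s3}) gives $x\sim w$ in $\h$ iff $w\in V_f(\h^i)$ and $x\sim w$ in $\h^i$; using the key fact one checks $w\in V_f(\h^i)\iff\psi(w)\in V_f(\h^i)$ (if $w\notin V_f(\h^i)$, choose $j\ne i$ with $w\in V_f(\h^j)$ and note $\psi^j$ fixes $V_f(\h^i)\cap V_f(\h^j)$ pointwise, hence permutes $V_f(\h^j)\setminus V_f(\h^i)$), and when $w\in V_f(\h^i)$ one has $x\sim w$ in $\h^i$ iff $\psi(x)=x\sim\psi^i(w)=\psi(w)$ in $\h^i$. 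Combining these, $x\sim w$ in $\h$ iff $\psi(x)\sim\psi(w)$ in $\h$, so $\psi\in\Aut^*(\h)$.

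Finally, (3) is formal: $\varphi$ is well-defined by (1), the gluing operation $\theta$ of (2) is a map $\mathcal{A}(\h)\to\Aut^*(\h)$, and the identities $\psi|_{V(\h^i)}=\psi^i$ and $\theta\big((\psi|_{\h^i})_{i\in I}\big)=\psi$ show $\varphi\circ\theta=\id$ and $\theta\circ\varphi=\id$, whence $\varphi$ is bijective. I expect the only genuinely delicate point to be the slim–fat bookkeeping in the last step of (2) — tracking whether a fat vertex stays inside a given addend under the glued map — but this reduces cleanly to the key fact together with conditions (\ref{s3}) and (\ref{s4}) of Definition~\ref{definition:sum}.
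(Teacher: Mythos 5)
Your proof is correct and follows the same skeleton as the paper's: in (1) you show each addend is preserved setwise and restrict, in (2) you verify adjacency via the slim--slim, fat--fat, slim--fat case split after checking well-definedness, and (3) is the same formal inverse-gluing argument. The genuine difference is your ``key fact'' that every tuple $(\psi^i)_{i\in I}\in\mathcal{A}(\h)$ fixes each shared fat vertex pointwise, derived directly from Definition~\ref{definition:sum}~(\ref{s3}) and (\ref{s4}); the paper has no tuple-level analogue of this. Instead, in part (1) the paper invokes Lemma~\ref{lem:fixfat} (which applies only to an automorphism of the whole sum) to get compatibility of the restrictions --- where, as you note, compatibility is automatic anyway --- while in part (2) it simply asserts the step ``$y\notin V_f(\h^i)$ implies $\psi(y)\notin V_f(\h^i)$'' and never addresses bijectivity of the glued map. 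Your key fact justifies exactly these two points (the first by the injectivity argument you give, the second via the inverse tuple $((\psi^i)^{-1})_{i\in I}$), so your write-up is somewhat more complete than the paper's at precisely the spots where the paper is terse, at the cost of one extra auxiliary lemma.
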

\begin{proof}
We first prove (1).
Now we show that
\begin{align}
\mbox{$\psi(x) \in V(\h^i)$ for every $x \in V(\h^i)$.}\label{cond-lem:decomp-1}
\end{align}
If $x\in V_{s}(\h^{i})$, then $\psi (x)=x\in V(\h^{i})$ since $\psi |_{V_{s}(\h)}=\id_{V_{s}(\h)}$.
Thus we may assume that $x\in V_{f}(\h^{i})$.
Then there exists a vertex $u\in N_{\h^{i}}^{s}(x)$.
Since $\psi \in \Aut^*(\h)$, we have $u = \psi(u) \sim \psi(x)$ in $\h$, i.e., $\psi (x)\in N_{\h}^{f}(u)$.
It follows from Definition~\ref{definition:sum}~(\ref{s3}) that $\psi(x) \in N_{\h^i}^f(u)$, and so $\psi(x) \in V(\h^i)$, which proves (\ref{cond-lem:decomp-1}).
Since $\psi \in \Aut^{*}(\h)$ and $\psi |_{\h^{i}}:V(\h^{i})\to V(\h^{i})$ by (\ref{cond-lem:decomp-1}), $\psi |_{\h^{i}}\in \Aut^{*}(\h^{i})$.
Furthermore, for $i,j\in I$ with $i\neq j$ and $y\in V_{f}(\h^{i})\cap V_{f}(\h^{j})$, it follows from Lemma~\ref{lem:fixfat} that $\psi (y)=y$, and so $\psi |_{\h^{i}}(y)=\psi |_{\h^{j}}(y)$.
Consequently, we have $(\psi|_{\h^i})_{i \in I} \in \mathcal{A}(\h)$.

Next we prove (2).
Since $\psi ^{i}|_{V_{s}(\h^{i})}=\id_{V(\h^{i})}$ for all $i\in I$, we have $\psi |_{V_{s}(\h)}=\id_{V_{s}(\h)}$.
Thus it suffices to show that $\psi \in \Aut(\h)$, i.e., $x\sim y$ in $\h$ if and only if $\psi (x)\sim \psi (y)$ in $\h$ for all $x,y\in V(\h)$ with $x\neq y$.
Let $x,y\in V(\h)$ be vertices with $x\neq y$.
If $x,y\in V_{s}(\h)$, then $x\sim y$ in $\h$ if and only if $\psi (x)\sim \psi (y)$ in $\h$ since $\psi |_{V_{s}(\h)}=\id _{V_{s}(\h)}$; if $x,y\in V_{f}(\h)$, then $x\not\sim y$ and $\psi (x)\not\sim \psi (y)$ in $\h$ by Definition~\ref{dfn:Hoff} (2) and the fact that $\psi (V_{f}(\h))\subset V_{f}(\h)$.
In either case, we obtain the desired conclusion.
Thus, without loss of generality, we may assume that $x\in V_{s}(\h^{i})$ with $i\in I$ and $y\in V_{f}(\h)$.
If $y\notin V_{f}(\h^{i})$, then $\psi (y)\notin V_{f}(\h^{i})$, and hence $x \not\sim y$ and $\psi(x) \not\sim \psi(y)$ in $\h$ by Definition~\ref{definition:sum}~(\ref{s3}); if $y\in V_{f}(\h^{i})$, then 
\begin{align*}
x\sim y\mbox{ in } \h &\iff x\sim y\mbox{ in } \h^{i}\\
&\iff \psi ^{i}(x) \sim \psi ^{i}(y) \mbox{ in }\h^{i}\\
&\iff \psi (x) \sim \psi (y) \mbox{ in }\h
\end{align*}
since $\psi ^{i}\in \Aut(\h^{i})$.
In either case, we obtain the desired conclusion, and so (2) is proved.

Finally, we prove (3).
By (1), $\varphi $ as in (3) is a mapping from $\Aut^{*}(\h)$ to $\mathcal{A}(\h)$.
Now we define a new mapping $\varphi'$ on $\mathcal{A}(\h)$ such that $\varphi'((\psi^i)_{i \in I}) = \psi $ where $\psi $ is defined from $(\psi^i)_{i \in I}$ by as in (\ref{K10}).
Then by (2), $\varphi'$ is a mapping from $\mathcal{A}(\h)$ to $\Aut^{*}(\h)$.
Furthermore, it follows from the definitions of $\varphi $ and $\varphi '$ that $\varphi'\circ \varphi=\id_{\Aut^{*}(\h)}$ and $\varphi \circ \varphi '=\id_{\mathcal{A}(\h)}$.
This implies that $\varphi $ is a bijection.
\end{proof}

\begin{proof}[Proof of Proposition~\ref{lem:Aut}]
Let $\psi \in \Aut^*(\h)$.
Note that $\psi=\id_{V(\h)}$ if and only if $\psi|_{\h^i} = \id_{V(\h^{i})}$ for all $i \in I$.
Since $(\psi|_{\h^i})_{i \in I} \in \mathcal{A}(\h)$ by Lemma~\ref{lem:decomp} (1), $\psi|_{V_{s}(\h^{i})}=\id_{V_{s}(\h^{i})}$ for all $i\in I$.
Thus it suffices to show that
\begin{align}
\psi|_{V_{f}(\h^{i})}=\id_{V_{f}(\h^{i})}.\label{eq-lem:Aut-1}
\end{align}
If $\h^i$ is not isomorphic to $\h_2$, then $\h^{i}$ has exactly one fat vertex, and so (\ref{eq-lem:Aut-1}) holds.
Thus we may assume that $\h^i\simeq \h_2$.
Write $V_s(\h^i) = \{x\}$ and $V_f(\h^i) = \{y_1, y_2\}$.
Since $\h$ is not isomorphic to $\h_2$, we may assume that $y_{1}\in V_{f}(\h^{j})$ for some $j\in I\setminus \{i\}$, i.e., $y_{1}\in V_{f}(\h^{i})\cap V_{f}(\h^{j})$.
Then by Lemma~\ref{lem:fixfat}, $\psi (y_{1})=y_{1}$, and so (\ref{eq-lem:Aut-1}) holds.
\end{proof}

\begin{lem}
\label{lem:uni}
Let $\fH$ be a family of Hoffman graphs, and let $G$ be a slim $\fH$-line graph having a strict $\fH$-cover $\n$.
Then $G$ has a unique strict $\fH$-cover up to equivalence if the following hold:
\begin{enumerate}
\item \label{u1}
$\Aut^*(\n) = \{ \id _{V(\n)}\}$;
\item \label{u2}
$|\Aut(\n)|=|\Aut(G)|$; and
\item \label{u3}
every strict $\fH$-covers of $G$ is isomorphic to $\n$.
\end{enumerate}	
\end{lem}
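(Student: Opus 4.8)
The plan is to fix an arbitrary strict $\fH$-cover $\m$ of $G$ and prove that $\m$ is equivalent to $\n$; since $\m$ is then arbitrary, this gives the uniqueness up to equivalence. First I would record the book\-keeping observations: because $G$ is slim and $\n$, $\m$ are strict $\fH$-covers of $G$, we have $V_s(\n)=V_s(\m)=V(G)$, and the slim subgraph of each of $\n$ and $\m$ is $G$ itself. By hypothesis~(\ref{u3}) there is at least one isomorphism $\theta_0:\n\to\m$, so the map $\Aut(\n)\to\operatorname{Iso}(\n,\m)$, $\psi\mapsto\theta_0\circ\psi$, is a bijection onto the set $\operatorname{Iso}(\n,\m)$ of all isomorphisms $\n\to\m$; hence $|\operatorname{Iso}(\n,\m)|=|\Aut(\n)|$, a finite number.

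Next I would introduce the restriction map $\rho:\operatorname{Iso}(\n,\m)\to\Aut(G)$ defined by $\rho(\varphi)=\varphi|_{V(G)}$. This is well defined: any isomorphism $\varphi:\n\to\m$ maps $V_s(\n)=V(G)$ bijectively onto $V_s(\m)=V(G)$ and preserves adjacency among slim vertices, so $\varphi|_{V(G)}$ is an automorphism of the common slim subgraph $G$. The key point is that $\rho$ is injective: if $\varphi_1,\varphi_2\in\operatorname{Iso}(\n,\m)$ satisfy $\varphi_1|_{V(G)}=\varphi_2|_{V(G)}$, then $\varphi_2^{-1}\circ\varphi_1$ is an automorphism of $\n$ whose restriction to $V_s(\n)$ is $\id_{V_s(\n)}$, i.e. $\varphi_2^{-1}\circ\varphi_1\in\Aut^*(\n)$; by hypothesis~(\ref{u1}) this group is trivial, so $\varphi_1=\varphi_2$.

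Finally I would combine the cardinalities. By hypothesis~(\ref{u2}) we have $|\operatorname{Iso}(\n,\m)|=|\Aut(\n)|=|\Aut(G)|$, so $\rho$ is an injection between finite sets of the same size, hence a bijection; in particular $\id_{V(G)}\in\operatorname{Im}(\rho)$. Thus there is $\varphi\in\operatorname{Iso}(\n,\m)$ with $\varphi|_{V(G)}=\id_{V(G)}$, which is exactly an equivalence between $\n$ and $\m$, completing the argument.

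I do not expect a genuine obstacle here; the lemma is essentially an orbit–stabilizer counting argument. The only points requiring a little care are checking that every strict $\fH$-cover of the slim graph $G$ really has $G$ as its slim subgraph (so that $\rho$ lands in $\Aut(G)$), and that all automorphism groups in play are finite, so that an injection between equinumerous finite sets is automatically surjective.
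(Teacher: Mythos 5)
Your proof is correct and follows essentially the same counting argument as the paper: the paper defines the restriction homomorphism $r:\Aut(\m)\to\Aut(G)$, uses (\ref{u1}) (transferred to $\m$ via (\ref{u3})) for injectivity and (\ref{u2}) for bijectivity, and then composes a given isomorphism $\n\to\m$ with a suitable $\sigma\in\Aut(\m)$ to make its restriction the identity. Your variant, restricting the set of isomorphisms $\n\to\m$ directly to $\Aut(G)$, is just a cosmetic repackaging of the same injectivity-plus-cardinality argument and is sound.
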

\begin{proof}
Let $\m$ be a strict $\fH$-cover of $G$.
We show that $\m$ and $\n$ are equivalent.
By (\ref{u3}), $\m$ is isomorphic to $\n$.
This together with (\ref{u1}) and (\ref{u2}) implies that
\begin{align}
\Aut^*(\m) = \{ \id_{V(\m)} \}\label{K11}
\end{align}
and
\begin{align}
|\Aut(\m)|=|\Aut(G)|.\label{K12}
\end{align}

We define the homomorphism $r : \Aut(\m) \to \Aut(G)$ by $r(\varphi) = \varphi|_G$ for $\varphi \in \Aut(\m)$.
We claim that
\begin{align}
r \mbox{ is injective}.\label{K13}
\end{align}
Let $\varphi \in \Ker r$.
Since $\varphi \in \Aut(\m)$ and $\varphi|_{V_{s}(\m)} = \varphi|_G = r(\varphi) = \id_{V(G)}$, we have $\varphi \in \Aut^*(\m)$.
This together with (\ref{K11}) leads to $\varphi = \id_{V(\m)}$.
Since $\varphi $ is arbitrary, we have $\Ker r = \{ \id_{V(\m)} \}$, and so (\ref{K13}) holds.
By (\ref{K12}) and (\ref{K13}), we see that $r$ is bijective.

Take an isomorphism $\psi$ from $\n$ to $\m$.
Since $(\psi |_G)^{-1} \in \Aut(G)$ and $r$ is bijective, there exists $\sigma \in \Aut(\m)$ such that $r(\sigma )=(\psi |_G)^{-1}$.
Then we can verify that $\sigma \circ \psi$ is an isomorphism from $\n$ to $\m$ and its restriction to $G$ is equal to $\id_{V(G)}~(=\id_{V_{s}(\n)})$.
Therefore $\m$ and $\n$ are equivalent.	
\end{proof}

\begin{proof}[Proof of Proposition~\ref{lem:pc}]
Let $G\in \cY$.
By (I) and the definition of $\Phi $, there exists a strict $\fH$-cover $\n \in \cX$ of $G$.
Now we show that $G$ and $\n$ satisfy the three conditions in Lemma~\ref{lem:uni}.
The conditions (\ref{u1}) and (\ref{u2}) in Lemma~\ref{lem:uni} follow from (II) and (IV), respectively.
By (I) and (III), $\Phi $ is bijective.
In particular, $\Phi ^{-1}(G)~(\in \cX)$ is a unique strict $\fH$-cover of $G$ up to isomorphism.
This implies that the condition (\ref{u3}) in Lemma~\ref{lem:uni} holds.
Hence it follows from Lemma~\ref{lem:uni} that $G$ has a unique strict $\fH$-cover up to equivalence, which proves the proposition.
\end{proof}

Now we explain our strategy:
Fix a family $\fH\subset \fO$ with $\h_2 \in \fH$ and a positive integer $N\geq 7$.
Since a graph is a slim $\fH$-line graph if and only if it is a slim $\bar{\fH}$-line graph, we may assume that $\fH=\bar{\fH}$.
Then we can construct the families $\cX$ and $\cY$ in Proposition~\ref{lem:pc} using computer programming.
Note that the conditions~(I) and (II) in Proposition~\ref{lem:pc} always hold by Lemma~\ref{lem:ExiCov} and Proposition~\ref{lem:Aut}, respectively.
Furthermore, we can judge whether the conditions~(III) and (IV) in Proposition~\ref{lem:pc} hold by computer search.
If these conditions are satisfied, then $N_{\fH} \le N$.
Note that Proposition~\ref{prop:lb} gives the smallest case for $N$.

Indeed, when $\h_{2}\in \fH \subset \fO$ and $N_{\fH}$ is small, some softwares such as MAGMA~\cite{MAGMA} can find an integer $N$ as in Theorem~\ref{thm:uni} along above strategy.
Recall that Taniguchi~\cite{T1} found $N$ in Theorem~\ref{thm:T1} as $N=8$ by computer search.
Similarly, we seek an integer $N$ as in Theorem~\ref{thm:uni} for $\fH$ other than $\{\h_{2},\h_{5}\}$ as follows:

\begin{ex}
\label{ex: 6.7}
Let $\h_5'$ be the Hoffman graph obtained from three independent (slim) vertices by joining a new fat vertex (see Figure~\ref{fig:h5'}).
\begin{figure}	[htbp]
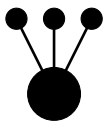
	\label{fig:h5'}
	\centering
	\Hoffee{20pt}
	\caption{The Hoffman graph $\h'_5$}
\end{figure}
Then Proposition~\ref{prop:lb} leads to $N_{\{\h_2,\h_5'\}} \geq 2 \cdot 3 + 2=8$, and by computer search, we obtain that $N_{\{\h_2,\h_5'\}}$ is actually equal to $8$. 
\end{ex}

\section*{Acknowledgements}
We would like to express our sincere gratitude to Professor Munemasa for his helpful comments.
	
\bibliography{main}

\end{document}